\numberwithin{equation}{section}
\DeclareMathOperator{\fl}{flat}
\DeclareMathOperator{\hyp}{hyp}
\DeclareMathOperator{\sol}{solv}
\DeclareMathOperator{\shadow}{Sh}
\DeclareMathOperator{\lad}{Lad}
\DeclareMathOperator{\slope}{Slope}
\DeclareMathOperator{\lio}{Lio}
\DeclareMathOperator{\tot}{total}
\newcounter{sebcomments}
\newcounter{vaibhavcomments}
\newtheorem{theorem}{Theorem}[section]
\newtheorem{lemma}[theorem]{Lemma}
\newtheorem{corollary}[theorem]{Corollary}
\newtheorem{definition}[theorem]{Definition}
\newtheorem{remark}[theorem]{Remark}
 \let\c@theorem=\c@subsection
 \let\c@conjecture=\c@subsection
 \let\c@lemma=\c@subsection
 \let\c@proposition=\c@subsection
 \let\c@claim=\c@subsection
 \let\c@question=\c@subsection
 \let\c@criterion=\c@subsection
 \let\c@vfconj=\c@subsection
 \let\c@definition=\c@subsection
 \let\c@notation=\c@subsection
 \let\c@remark=\c@subsection
 \let\c@example=\c@subsection
 \let\c@equation=\c@subsection
 \let\c@figure=\c@subsection
 \let\c@wrapfigure=\c@subsection
\begin{document}
\title{Linear progress in fibres}

\author[Gadre]{Vaibhav Gadre}
\address{\hskip-\parindent
     School of Mathematics and Statistics\\
     University of Glasgow\\
     University Place\\
      Glasgow\\
      G12 8QQ United Kingdom}
\email{Vaibhav.Gadre@glasgow.ac.uk}
\thanks{The second author is supported in part by the DFG grant HE 7523/1-1 within the SPP 2026 ``Geometry at Infinity''}

\author[Hensel]{Sebastian Hensel}
\address{\hskip-\parindent
  Mathematisches Institut der Universität M\"unchen\\
  Theresienstr. 39\\
  D-80333 M\"unchen\\
  Germany
  }
\email{hensel@math.lmu.de}
 


\begin{abstract}
A fibered hyperbolic 3-manifold induces a map from the hyperbolic plane to hyperbolic 3-space, the respective universal covers of the fibre and the manifold.
The induced map is an embedding that is exponentially distorted in terms of the individual metrics. 
In this article, we begin a study of the distortion along typical rays in the fibre. 
We verify that a typical ray in the hyperbolic plane makes linear progress in the ambient metric in hyperbolic 3-space.
We formulate the proof in terms of some soft aspects of the geometry and basic ergodic theory.
This enables us to extend the result to analogous contexts that correspond to certain extensions of closed surface groups. 
These include surface group extensions that are Gromov hyperbolic, the universal curve over a Teichm\"{u}ller disc, and the extension induced by the Birman exact sequence.
\end{abstract}


\maketitle

\section{Introduction}

In this article, we initiate the study of \emph{distortion} for \emph{typical elements} in groups focusing on examples whose motivation comes from geometry and topology in dimensions 2 and 3.

\smallskip
Suppose that $H$ is a finitely generated subgroup of a finitely generated group $G$. 
For any choice of proper word metrics on $H$ and $G$, the inclusion of $H$ into $G$ is a Lipschitz map.
However, distances are contracted by arbitrary amounts by the inclusion in many examples.  
This can be quantified by the \emph{distortion function}, the smallest function bounding the word norm of $H$ in terms of the word norm of $G$.
Many examples from low-dimensional topology exhibit exponential distortion; well known examples are the fundamental groups of fibres in fibered hyperbolic 3-manifolds, Torelli and handlebody groups in surface mapping class groups.

\smallskip 
By definition, the distortion function measures the worst case discrepancy between intrinsic and ambient metrics -- for example, the existence of a single sequence of group elements in $H$ whose norm grows linearly in $G$ and exponentially in $H$ already implies exponential distortion.
 
 \smallskip
In this article, we adopt instead a more probabilistic viewpoint to ask about the growth of the ambient norm for \emph{typical} elements of the subgroup.

\smallskip 
Concretely, we consider subgroups isomorphic to the fundamental group of a closed surface and thus quasi-isometric to the hyperbolic plane. 
This allows us to use the (Lebesgue) measure on the circle at infinity to sample geodesics in the subgroup. 
Our main result proves that in various topologically interesting, exponentially distorted examples of surface group extensions, such paths nevertheless make linear progress in the ambient space. 
We consider three geometrically motivated contexts, the first of which from geometric group theory.
For the entirety of our article, we let $\Sigma$ be a closed orientable surface of genus $g \geqslant 2$. 

\begin{theorem}\label{thm:intro-extension}
	Let
	 \[ 1 \to \pi_1(\Sigma) \to \Gamma \to Q \to 1 \] 
	 be a hyperbolic group extension of a closed surface group. Then there is a constant $c>0$ that depends only on the word metrics, so that almost every geodesic ray in $\pi_1(\Sigma)$ (sampled by the Lebesgue measure under an identification of $\pi_1(\Sigma)$ with $\mathbb{H}^2$) makes linear progress with speed at least $c$ in the word metric on $\Gamma$.
\end{theorem}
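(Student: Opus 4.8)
The plan is to extract, via Kingman's subadditive ergodic theorem, a well-defined asymptotic speed of progress, and then to force that speed to be positive by a soft orbit-counting estimate. Fix a hyperbolic metric on $\Sigma$, so that $H:=\pi_1(\Sigma)$ acts on $\mathbb{H}^2$ by isometries with compact quotient; let $o$ be a basepoint lying in the orbit, identify $\partial\mathbb{H}^2$ with the circle carrying the Lebesgue class, and write $\gamma_\xi$ for the unit-speed geodesic ray from $o$ to $\xi$. Fix a word metric $d_\Gamma$ on $\Gamma$. Since $H\cdot o$ is coarsely dense in $\mathbb{H}^2$, we record the ``$\Gamma$-position'' of a point of $\mathbb{H}^2$ by $d_\Gamma(e,h)$ for an $h\in H$ with $ho$ nearest to it; for $v\in T^1\mathbb{H}^2$ and $s<t$ put $c(v;s,t):=d_\Gamma(h_s,h_t)$, where $h_s o$ is a nearest orbit point to $\gamma_v(s)$. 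This is $H$-invariant and subadditive in the middle variable, so it descends to a subadditive cocycle over the geodesic flow on $T^1\Sigma=T^1\mathbb{H}^2/H$, and it is bounded over unit time because a unit geodesic step moves the nearest orbit point a bounded $d_\Gamma$-amount. The quantity of interest is $F_\xi(t):=c(v_\xi;0,t)$, with $v_\xi$ the vector at $o$ pointing at $\xi$; linear progress with speed at least $c$ means $\liminf_t \tfrac1t F_\xi(t)\ge c$ for Lebesgue-almost every $\xi$.

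First I would apply Kingman's theorem together with the classical ergodicity of the geodesic flow of a closed hyperbolic surface: for Liouville-almost every $v\in T^1\Sigma$ the limit $L:=\lim_{t\to\infty}\tfrac1t c(v;0,t)$ exists and is a single constant. To pass from ``Liouville-almost every vector'' to ``Lebesgue-almost every ray from $o$'', I would use that under $T^1\mathbb{H}^2\cong\mathbb{H}^2\times\partial\mathbb{H}^2$ the Liouville measure lies in the class of $(\mathrm{volume})\times(\mathrm{Lebesgue})$; Fubini then gives the limit for rays based at almost every point of $\mathbb{H}^2$, and since two asymptotic rays eventually share nearest orbit points up to bounded error, $\tfrac1t c(v_{x,\xi};0,t)$ is asymptotically independent of the basepoint $x$. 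Hence $\tfrac1t F_\xi(t)\to L$ for Lebesgue-almost every $\xi$.

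The heart of the matter is that $L>0$, and here I would use only that $\Gamma$ is finitely generated, so its exponential growth rate $v_\Gamma:=\lim_R\tfrac1R\log\#B_\Gamma(R)$ is finite. Fix $\theta$ with $0<\theta<1/v_\Gamma$. Since $\xi\mapsto\gamma_\xi(t)$ is $\mathrm{SO}(2)$-equivariant, it pushes the Lebesgue probability measure on $\partial\mathbb{H}^2$ to the rotation-invariant, hence uniform, probability measure $m_t$ on the metric circle $S(o,t)$, whose circumference is comparable to $e^t$. If $F_\xi(t)\le\theta t$, then the nearest orbit point $h_t o$ to $\gamma_\xi(t)$ has $d_\Gamma(e,h_t)\le\theta t$; there are at most $\#B_\Gamma(\theta t)\le C_1 e^{v_\Gamma\theta t}$ such $h$, and, by coarse density and separation of the orbit, each one is the nearest orbit point for a set of $\xi$ that maps into a bounded-length arc of $S(o,t)$, hence of $m_t$-mass at most $C_2 e^{-t}$. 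Therefore
\[
 \mathrm{Leb}\{\xi:F_\xi(t)\le\theta t\}\ \le\ C_1C_2\, e^{v_\Gamma\theta t}\, e^{-t}\ =\ C_1C_2\, e^{-(1-v_\Gamma\theta)t}\ \xrightarrow[t\to\infty]{}\ 0 .
\]
Combining this with the previous paragraph via Egorov's theorem---if $L<\theta$ on a set of positive measure, then on a slightly smaller set one has $\tfrac1t F_\xi(t)<\theta$ for all large $t$, contradicting the displayed decay---yields $L\ge\theta$, and letting $\theta\uparrow 1/v_\Gamma$ gives $L\ge 1/v_\Gamma>0$. One may take $c=1/v_\Gamma$, which depends only on the word metric on $\Gamma$ (and the fixed metric on $\Sigma$ used to parametrise rays).

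I expect the genuinely substantive point to be the positivity step, and the feature worth stressing is that it needs \emph{nothing} about the exponential distortion of $\pi_1(\Sigma)$ in $\Gamma$ nor about $\Gamma$ being hyperbolic: the only inputs are that $\Gamma$ is finitely generated and that $\pi_1(\Sigma)$ is a cocompact Fuchsian group, which is precisely why the same scheme should carry over to the Teichm\"uller-disc and Birman-sequence settings announced in the introduction. The two places that will demand genuine (if routine) care are the basepoint transfer from Liouville-a.e.\ vectors to Lebesgue-a.e.\ rays, and the uniform-in-$t$ claim that each orbit point is responsible for an arc of $S(o,t)$ of $m_t$-mass $\asymp e^{-t}$---this last is where coarse density and separation of $\pi_1(\Sigma)\cdot o$ really enter. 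Finally, if one only wants the $\liminf$ bound and not the existence of the honest limit, Kingman can be bypassed: the displayed estimate decays exponentially, so Borel--Cantelli along integer times plus the $O(1)$-Lipschitz control of $t\mapsto F_\xi(t)$ already gives $\liminf_t\tfrac1t F_\xi(t)\ge 1/v_\Gamma$ almost surely.
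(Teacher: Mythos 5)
Your proof is correct, and it takes a genuinely different route from the paper's. The paper's argument for hyperbolic extensions builds $\pi_1(\Sigma)$-equivariant shadows, lifts them to Mitra-style ladder sets $L(\beta,r)$ in $\Gamma$, proves these are quasi-convex via a Lipschitz fibrewise projection (this is where hyperbolicity of $\Gamma$ is used), produces a ``good segment'' by running north--south dynamics of a suitable hyperbolic element whose axis crosses $\partial_\infty L(\beta,r)$, and then invokes ergodicity of the geodesic flow to show such segments are fellow-travelled with positive asymptotic frequency, each such visit contributing a definite amount of $\Gamma$-progress. You instead package $d_\Gamma$ directly as a subadditive cocycle over the geodesic flow, invoke Kingman plus ergodicity for existence of the speed $L$, and force $L>0$ by a pure counting argument pitting the circumference growth $e^t$ of $S(o,t)$ against the ball growth $e^{v_\Gamma\theta t}$ in $\Gamma$. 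This is essentially the ``fundamental inequality'' reasoning (drift $\geqslant$ entropy / growth), and as you correctly emphasise it uses neither hyperbolicity of $\Gamma$ nor anything about the distortion of the kernel --- so it proves a strictly more general statement than Theorem~\ref{thm:intro-extension} and subsumes the Birman case (Theorem~\ref{thm:intro-birman}) as well, with an explicit constant. What the paper's machinery buys in exchange is portability to the Teichm\"uller disk setting (Theorem~\ref{thm:intro-tm}): there the total space $E$ is not a group, so ``growth rate of $\Gamma$'' has no immediate analogue, and one would have to separately verify that the $\pi_1(\Sigma)$-orbit in $(E,d_E)$ has exponential growth of uniformly bounded rate before your counting step applies --- this is plausible but not as automatic as in the group case, whereas the shadow/ladder argument transfers directly. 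Two small points worth tightening in a final write-up: the ball bound should be $\#B_\Gamma(R)\le C_\epsilon e^{(v_\Gamma+\epsilon)R}$, so the conclusion is $\liminf_t t^{-1}F_\xi(t)\ge 1/v_\Gamma$ after letting $\epsilon\to 0$ (and one should state $c$ as any constant strictly below $1/v_\Gamma$ unless one keeps the Kingman limit); and a remark on measurable selection of the nearest orbit point (to make the cocycle measurable, resolving ties lexicographically say) would be prudent.
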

In particular, this includes the classical case where $\Gamma$ is a
fundamental group of a fibered hyperbolic $3$--manifold. This special
case is likely well-known to experts (but to our knowledge not
published). We also want to mention here that all known groups of this
type have $Q$ virtually free (but this does not yield any
simplifications for the question considered here). 

\smallskip 
The second context arises in Teichm\"uller theory. 
A holomorphic quadratic differential on a Riemann surface is equivalent to a collection of charts from the surface to $\mathbb{C}$ with transition functions that are half-translations, that is of the form $z \to \pm z + c$. 
The $SL(2, \mathbb{R})$-action on $\mathbb{C} = \mathbb{R}^2$ preservers the form of the transitions and hence descends to an action on the space of quadratic differentials. 
The compact part $SO(2,\mathbb{R})$ acts by rotations on the charts and hence preserves the underlying conformal structure on the surface. 
Thus, given a quadratic differential $q$, the $SL(2, \mathbb{R})$ orbit of $q$ gives an isometric embedding of $\mathbb{H}^2 = SL(2,\mathbb{R}) \, / SO(2,\mathbb{R})$ in the Teichm\"uller space of the surface. This is called the associated Teichm\"uller disk $D_q$. 
We may then consider a bundle $E \to D_q$ whose fibres are the universal covers of the corresponding singular flat surfaces.
The bundle $E$ carries a natural metric, in which the fibres are again exponentially distorted. 
Although the total space is not hyperbolic, we obtain here:

\begin{theorem}\label{thm:intro-tm}
For any quadratic differential $q$, there is a number $c>0$, so that any geodesic in a fibre of $E$ (sampled with Lebesgue measure under an identification with $\mathbb{H}^2$) makes linear progress with speed at least $c$ in the metric on $E$.
\end{theorem}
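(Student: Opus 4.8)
The plan is to reduce the theorem to the convergence of a subadditive cocycle over the geodesic flow of an auxiliary hyperbolic surface, and then to isolate the single genuinely geometric ingredient: that a \emph{typical} flat geodesic in a fibre cannot be globally shrunk by moving through the Teichm\"uller disc. First I fix a hyperbolic metric $\rho_0$ on $\Sigma$, which identifies $\widetilde\Sigma$ with $\mathbb H^2$ and fixes the sampling measure as Lebesgue measure on $S^1=\partial\mathbb H^2$; let $o$ be a basepoint and $\gamma_\xi$ the $\rho_0$--geodesic ray from $o$ to $\xi$. The fibre $F$ of $E$ over the centre $t_0$ of $D_q$ carries the singular flat metric of $q$ (the fibres are $\mathrm{CAT}(0)$), and $\pi_1(\Sigma)$ acts on $F$ geometrically, so $F$ is quasi-isometric to $\mathbb H^2$ and in particular Gromov hyperbolic. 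A Morse-type argument then shows that the flat geodesic ray $r_\xi$ from $o$ to $\xi$ stays within bounded Hausdorff distance of $\gamma_\xi$, with comparable arclength parametrisations, and since the inclusion $\iota_E\colon F\hookrightarrow E$ of a fibre is coarsely Lipschitz, it suffices to prove $d_E(\iota_E o,\iota_E\gamma_\xi(s))\geq cs$ for Lebesgue-a.e.\ $\xi$ and all large $s$. On $T^1(\Sigma,\rho_0)$, with its Liouville measure $m$ and ergodic geodesic flow $(g_s)$, the assignment $a(v,s)=d_E\!\left(\iota_E x_v,\iota_E x_{g_sv}\right)$ (with $x_v$ the footpoint of $v$) is $\pi_1(\Sigma)$--invariant, nonnegative, bounded above by the $\rho_0$--distance travelled, and subadditive, $a(v,s+s')\leq a(v,s)+a(g_sv,s')$, by the triangle inequality. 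By Kingman's subadditive ergodic theorem $a(v,s)/s\to L\geq 0$ for $m$-a.e.\ $v$; since changing the footpoint alters $a$ by a bounded (finite) amount, a Fubini argument over the footpoint transfers this to $d_E(\iota_E o,\iota_E\gamma_\xi(s))/s\to L$ for Lebesgue-a.e.\ $\xi$. The theorem therefore reduces to the assertion that $L>0$.

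For positivity I would establish, as the geometric heart of the argument, a coarse lower bound of the form
\[ d_E\!\left(\iota_E x,\iota_E y\right)\ \gtrsim\ \inf_{p\in D_q}\Big(d_{D_q}(t_0,p)+d_{q_p}(x,y)\Big)\qquad (x,y\in F), \]
where $q_p$ is the flat metric on the fibre $F_p$; this says that no path in $E$ beats the obvious strategy of rising to a fibre $F_p$, crossing there, and returning, and I expect to prove it from the $\delta$--hyperbolicity of $D_q$ together with the (warped-product) shape of the metric on $E$ by a nearest-point-projection argument, the subtle point being that the cone points of the fibres and the degeneration of the metric in the thin part of $D_q$ do not open up extra shortcuts. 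The gain from this inequality is that passing from $q$ to $q_p$ is locally an affine map with singular values $e^{\pm d_{D_q}(t_0,p)}$: it can contract only an angular sector of width $\asymp e^{-d_{D_q}(t_0,p)}$ about one direction, and it stretches, or at most mildly shortens, every direction at definite angle from that one. Consequently, if the flat geodesic $r_\xi|_{[0,t]}$ is \emph{$(\delta,\lambda)$--spread}, meaning that for every $\theta^{\ast}\in S^1$ a fraction at least $\lambda>0$ of its flat length is spent in straight subsegments making angle at least $\delta>0$ with $\theta^{\ast}$, then for every $p$ the quantity $d_{D_q}(t_0,p)+d_{q_p}(r_\xi(0),r_\xi(t))$ is at least $\min(\tfrac12,\lambda\sin\delta)\,t$: either the disc displacement already exceeds $t/2$, or it is small and then the $\delta$--spread keeps the image of $r_\xi$ long in the metric $q_p$ --- here one must still argue that the $q_p$--geodesic between the two endpoints cannot be much shorter than the deformed segment, which, together with the displayed inequality, is the real technical content. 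Feeding a uniform $(\delta,\lambda)$--spread into the displayed bound yields $d_E(\iota_E r_\xi(0),\iota_E r_\xi(t))\gtrsim t$, that is, $L>0$.

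It remains to see that a.e.\ ray is $(\delta,\lambda)$--spread for \emph{uniform} $\delta,\lambda$. The length-weighted empirical distribution $\nu^t_\xi$ on $S^1$ of the flat directions along $r_\xi|_{[0,t]}$ is governed by the geodesic flow on $T^1\Sigma$, so by ergodicity it converges, for Lebesgue-a.e.\ $\xi$, to a fixed measure $\nu_\infty$ depending only on $q$ and $\rho_0$; a soft argument --- for instance recurrence of the flow into a transverse pair of sectors, or the observation that a.e.\ fibre geodesic being asymptotically straight is incompatible with the fibre being genuinely (rather than just intermittently) exponentially distorted --- rules out $\nu_\infty$ being a single atom, hence $\sup_{\theta^{\ast}}\nu_\infty(B_\delta(\theta^{\ast}))\leq 1-\lambda<1$ for suitable $\delta,\lambda>0$, and this passes to $\nu^t_\xi$ for large $t$ and a.e.\ $\xi$. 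I expect the displayed coarse lower bound for $d_E$ restricted to a fibre --- controlling wandering paths in $E$ in the presence of cone points and of the thin part of $D_q$ --- to be the main obstacle; the non-degeneracy of $\nu_\infty$ and the transfer between the flow-invariant and the Lebesgue samplings are comparatively routine, and the whole scheme parallels the treatment of the hyperbolic extension in Theorem~\ref{thm:intro-extension}.
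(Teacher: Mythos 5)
Your proposal differs from the paper's route in its main structural choices: you reach for Kingman's subadditive ergodic theorem to produce a drift $L$ and then argue $L>0$ via a global coarse lower bound on $d_E$ plus a directional ``spread'' condition on flat geodesics, whereas the paper applies Birkhoff's pointwise ergodic theorem to the indicator of a positive-measure set $\Lambda\subset T^1\Sigma$ of progress certificates, where $\Lambda$ is built from an explicit $\ell$-good segment $\beta=i\ast v\ast h\ast f$ with one near-vertical and one near-horizontal saddle connection, and the progress is extracted from nested shadows $\shadow(\beta,\cdot)$ and their ladders $L(\beta,\cdot)=\bigcup_X\sigma(X)\shadow(\beta,\cdot)$, whose undistortion (Lemma~\ref{lem:TM-ladders}, adapting \cite{Mit}) is the substitute for hyperbolicity of $E$. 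Both schemes agree on the soft parts (the fibre is Gromov hyperbolic, flat and hyperbolic rays fellow-travel, ergodicity produces positive asymptotic frequency); they diverge precisely at the point you yourself flag as the ``geometric heart''.

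That point is a genuine gap, and I think it is deeper than you allow for. The displayed lower bound $d_E(\iota_E x,\iota_E y)\gtrsim\inf_p\bigl(d_{D_q}(t_0,p)+d_{q_p}(x,y)\bigr)$ asserts that the cheapest route between two fibre points is to rise to a single fibre, cross there, and return. But a path in $E$ is allowed to change fibres repeatedly, and this defeats the bound: take a flat geodesic consisting of $N$ alternating subsegments of length $\ell$ with slopes near-vertical and near-horizontal; in any \emph{single} fibre $F_p$ at least one family of subsegments is not contracted, so $d_{q_p}(x,y)\gtrsim N\ell$ and the right side is $\gtrsim N\ell$, yet a ``wandering'' path that rises to a fibre contracting the vertical, crosses one subsegment for cost $\sim e^{-t}\ell$, moves $\sim 2t$ across $D_q$ to a fibre contracting the horizontal, crosses the next, and so on, has total cost $\sim N(\log\ell+1)$ after optimising $t$. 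So the inequality fails by an arbitrarily large factor as $\ell$ grows. Relatedly, such a configuration satisfies your $(\delta,\lambda)$-spread condition (the length-weighted direction measure is a balanced two-atom measure, far from a single atom), so spread alone does not force linear separation in $E$; what matters is not just which directions appear but the scale at which the geodesic re-orients. The paper's construction circumvents this by fixing a specific good segment $\beta$ of definite length, proving a single separation estimate $d_E(\lad(\shadow(\beta,s)),\lad(\shadow(\beta,t)))\geq R$ for that $\beta$, and then only needing the \emph{existence} of such $\beta,\rho$ and the positivity of $\mu_{\lio}(\Lambda)$---it never requires a universal inequality of the form you display, and so is insulated from the wandering-path phenomenon (at the price that the separation may be much smaller than $\ell$, which is harmless for the theorem). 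Finally, the step where you claim the flat-direction empirical measure $\nu^t_\xi$ converges by ergodicity of the hyperbolic geodesic flow is also not immediate: the map sending a hyperbolic unit tangent vector to the flat direction of the fellow-travelling flat geodesic is neither continuous nor a factor of the flow in any obvious sense, so one needs a genuine argument (or, as the paper does, one avoids the issue entirely by working with the characteristic function of $\Lambda$, which is a robust open condition in $T^1\Sigma$).
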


Finally, we consider the Birman exact sequence 
\[ 
1 \to \pi_1(\Sigma, p) \to \mathrm{Mod}(\Sigma-p) \to \mathrm{Mod}(\Sigma) \to 1, 
\]
where $\mathrm{Mod}(\Sigma)$ is the mapping class group of a closed orientable surface $\Sigma$ of genus $g \geqslant 2$ and $\mathrm{Mod}(\Sigma - p)$ is the mapping class group of the surface $\Sigma$ punctured/ marked at the point $p$. 

\smallskip
Here, we obtain
\begin{theorem}\label{thm:intro-birman}
	For the Birman exact sequence
	\[ 1 \to \pi_1(\Sigma, p) \to \mathrm{Mod}(\Sigma - p) \to \mathrm{Mod}(\Sigma) \to 1, \]
	there is a number $c>0$, so that any geodesic in $\pi_1(\Sigma , p)$ (sampled with Lebesgue measure under an identification with $\mathbb{H}^2$) makes linear progress with speed at least $c$ in a word metric on $\mathrm{Mod}(\Sigma-p)$.
\end{theorem}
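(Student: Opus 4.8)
The plan is to deduce Theorem~\ref{thm:intro-birman} from a single counting estimate that uses only that $\mathrm{Mod}(\Sigma-p)$ is a finitely generated group containing the closed surface group $\pi_1(\Sigma,p)\cong\pi_1(\Sigma)$ as a subgroup; the Birman structure itself plays no further role. First I would fix a cocompact action of $\pi_1(\Sigma,p)$ on $\mathbb H^2$ coming from a hyperbolic metric on $\Sigma$, a basepoint $o\in\mathbb H^2$ with $D$ the diameter of a fundamental domain, and the resulting identification of geodesic rays from $o$ with boundary points $\xi\in\partial\mathbb H^2=S^1$ carrying Lebesgue measure $\mu$. For $\xi\in S^1$ let $r_\xi$ be the ray to $\xi$, and choose group elements $\gamma_\xi(t)\in\pi_1(\Sigma,p)$ with $d_{\mathbb H^2}(r_\xi(t),\gamma_\xi(t)\cdot o)\le D$; this is the ``geodesic ray in $\pi_1(\Sigma,p)$'' whose progress in $\mathrm{Mod}(\Sigma-p)$ we must bound below. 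I would also fix a word norm $|\cdot|$ on $\mathrm{Mod}(\Sigma-p)$, let $h<\infty$ be its exponential growth rate (so $\#\{\phi:|\phi|\le s\}\le C_0e^{hs}$), and record that the inclusion $\pi_1(\Sigma,p)\hookrightarrow\mathrm{Mod}(\Sigma-p)$ is coarsely Lipschitz for the respective word metrics.

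The heart of the argument is the claim that for every $\epsilon$ with $0<\epsilon<1/h$ the ``slow set'' $A_n(\epsilon)=\{\xi:|\gamma_\xi(n)|\le\epsilon n\}$ satisfies $\mu(A_n(\epsilon))\le C\,e^{(h\epsilon-1)n}$. Indeed, if $\xi\in A_n(\epsilon)$ then $g:=\gamma_\xi(n)$ has $|g|\le\epsilon n$, lies at distance $n\pm D$ from $o$ in $\mathbb H^2$, and $r_\xi$ meets $B(g\cdot o,D)$, so $\xi$ lies in the shadow $\shadow_o(B(g\cdot o,D))$; by the shadow lemma for the visual (Lebesgue) measure on $\partial\mathbb H^2$ --- concretely, a ball of radius $D$ at distance $R$ from $o$ subtends visual angle $\asymp e^{-R}$ --- this shadow has $\mu$-measure at most $C_1e^{D}e^{-n}$. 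Summing over the at most $C_0e^{h\epsilon n}$ possible values of $g$ (all of which are elements of $\mathrm{Mod}(\Sigma-p)$ of norm $\le\epsilon n$) gives the claim. Since $h\epsilon-1<0$ the measures $\mu(A_n(\epsilon))$ are summable, so by Borel--Cantelli, for $\mu$-a.e.\ $\xi$ one has $|\gamma_\xi(n)|>\epsilon n$ for all large integers $n$; a routine interpolation between integer times, using the coarse Lipschitz bound and the fact that consecutive $\gamma_\xi(n)$ are uniformly close, then gives $|\gamma_\xi(t)|\ge(\epsilon/2)\,t$ for all large real $t$. Thus Theorem~\ref{thm:intro-birman} holds with any $c<1/(2h)$. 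If a genuine limiting speed is preferred, one can instead combine Liouville-ergodicity of the geodesic flow on $T^1\Sigma$ with Kingman's subadditive ergodic theorem applied to the subadditive cocycle $n\mapsto|\gamma_\xi(n)|$ over $T^1\Sigma$, obtaining $|\gamma_\xi(n)|/n\to\ell$ a.e.\ with $\ell$ constant, and the estimate above then forces $\ell\ge\epsilon>0$.

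I do not expect a serious obstacle in the Birman case: the only geometric input is the (soft) shadow estimate on $\mathbb H^2$ together with the trivial exponential bound on ball sizes in a finitely generated group, and the shadow bookkeeping is routine. The point worth flagging is that this argument is so soft that it does not ``see'' the point-pushing monodromy at all --- which is why the real content of the circle of ideas lies in Theorem~\ref{thm:intro-tm}, where there is no ambient group but only the bundle $E\to D_q$: there the step ``$\#\{\phi:|\phi|\le s\}\le C_0e^{hs}$'' must be replaced by an at-most-exponential bound on the number of $1$-separated points of a fibre lying in an $E$-ball of radius $s$, together with a coarse Lipschitz estimate for the fibre inclusion into $E$, and verifying these is where the geometry of the bundle genuinely enters. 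For Theorems~\ref{thm:intro-extension} and~\ref{thm:intro-birman} both inputs come for free from finite generation of the overgroup.
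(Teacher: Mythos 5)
Your argument is correct, but it is a genuinely different and substantially softer route than the paper's. The paper proves Theorem~\ref{thm:intro-birman} by running the unified machinery of Remark~\ref{r.features}: it builds shadows in $\pi_1(\Sigma,p)$ and ladder-like sets $L(\beta,r)$ in $\mathrm{Mod}(\Sigma-p)$, then manufactures ``good'' segments by a hyperbolic-geometry device specific to point-pushing (arcs with $k$ self-intersections must be long, Lemma~\ref{l.self}, which gives a uniform separation of the nested ladders over all $\mathrm{Mod}(\Sigma)$-fibres), and finally invokes ergodicity of the geodesic flow to show such segments are fellow-travelled with positive frequency. You instead bypass all of the fibration structure: the only inputs are (i) the Sullivan shadow estimate on $\partial\mathbb{H}^2$ and (ii) the at-most-exponential volume growth of the finitely generated overgroup, and the conclusion follows by union bound plus Borel--Cantelli, with an explicit speed $c<1/h$. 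This really does prove the theorem, and would equally prove Theorem~\ref{thm:intro-extension}, and more generally linear progress for any cocompactly Fuchsian subgroup of any finitely generated group. What it buys is brevity and an explicit constant; what it loses is exactly what you flag yourself -- it sees none of the Birman/point-pushing geometry, produces no progress certificate or nesting structure in the total space, and it does not transfer as stated to Theorem~\ref{thm:intro-tm} (no ambient group, and one would have to separately establish a uniform bounded-geometry/exponential-growth bound for $d_E$ and a coarse Lipschitz bound for the fibre inclusion, which is precisely where the paper's geometric work would resurface). So the paper's choice of method is dictated not by necessity in the Birman case but by the goal of a single argument covering all three settings, and by the desire for the more structural ergodic-theoretic formulation (positive frequency of good segments) that the authors indicate they want for finer distortion statistics later. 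One small point to tighten: ``exponential growth rate $h$ with $\#B(s)\le C_0 e^{hs}$'' is stronger than the definition of growth rate; either take $h=\log(2|S|+1)$ from a crude generator count, or work with $h+\delta$ and $\epsilon<1/(h+\delta)$.
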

The three results are not unexpected.
The main merit of our article is that we distil the key features, thus giving a unified treatment in all three contexts that differ substantially in their details.
Our results also leave the topic poised for a finer exploration of distortion in all contexts.

\subsection*{Proof Strategy}

The basic method of proof is the same for all three results, and has two main parts. 
In the geometric part, we construct suitable shadow-like sets in the total space from a ladder-type construction motivated by the construction in \cite{Mit}. 
The basic idea is to move shadows in the base fibre to all other fibres using the monodromy and consider their union.
We implement the idea with sufficient care to ensure that for a pair of nested shadows in the base fibre the shadow-like set from the bigger shadow is contained in the shadow-like set from the smaller shadow.

\smallskip
We then characterise \texttt{"}good\texttt{"} geodesic segments in the fibre, that is segments for which the shadow-like set at the end of the segment is nested in the shadow-like set at the beginning by a distance in the ambient space that is linear in the length of the segment. 
This translates into a progress certificate for fibre geodesics in the ambient space.

\smallskip
Next, in the dynamical part we exploit ergodicity of the geodesic flow on hyperbolic surfaces to guarantee that fellow-travelling with good segments occurs with a positive asymptotic frequency thus proving the results.

\smallskip 
The argument is cleanest in the classical case of a fibered hyperbolic $3$--manifold. 
In this case, the monodromy is by a pseudo-Anosov map of the fibre surface.
Such a map has a unique invariant Teichm\"uller axis and is a translation along the axis.
By Teichm\"uller's theorem, the surface can be equipped with a quadratic differential such that the map is represented by an affine map (given by a diagonal matrix in $SL(2,\mathbb{R})$) in the singular flat metric on the surface defined by the quadratic differential. 
We can thus identify the fibre group by a quasi-isometry with universal cover of the surface with the lifted singular flat metric.
The pseudo-Anosov monodromy acting as an affine map defines the singular flat metrics on the other fibres. 
Although the monodromy does not act as an isometry of the singular flat metric, it maps geodesics to geodesics.
A long straight arc, for example a long saddle connection that makes an angle close to $\pi/4$ with both the horizontal and vertical foliations of $q$ has the property that all its images under the pseudo-Anosov monodromy will be also be long. 
Using (Gromov) hyperbolicity, the property of containing/ fellow-travelling such a segment is stable for geodesic rays.
In particular, it has positive Liouville measure when we pass to the hyperbolic metric. 
Hence, ergodicity of the hyperbolic geodesic flow implies that a typical hyperbolic ray satisfies the property with a positive asymptotic frequency.
This ensures linear progress.

\smallskip
In a general hyperbolic surface group extension, the monodromies have weaker properties. 
In the bundle over a Teichm\"uller disc, the ambient space is not hyperbolic but the monodromies are affine.
In the Birman exact sequence, both properties fail but weaker ones hold which still suffice to run our strategy.
We work around these problems and defining good segments that they give robust distance lower bounds in the ambient space requires the main care.

\subsection*{Other sampling methods}
In Theorems~\ref{thm:intro-extension} and~\ref{thm:intro-birman}, one could also sample in the kernel subgroup using random walks. 
Here, general results on random walks in hyperbolic groups ensure that corresponding results are true as well. 
In light of the famous Guivarc'h--Kaimanovich--Ledrappier singularity conjecture (see \cite[Conjecture 1.21]{Der-Kle-Nav}) for stationary measures, the random walk sampling is in theory different from the earlier sampling using the hyperbolic Liouville measure.

\smallskip
In the setup of Theorem~\ref{thm:intro-tm}, there is no direct random walk analog. 
In the lattice case, that is when the affine symmetric group $SL(X, q)$ (sometimes known as the Veech group) of a quadratic differential $q$ is a lattice in $SL(2,\mathbb{R})$, one may replace the bundle $E$ by the extension of $SL(X,q)$. 
Recent work of \cite{DDLS} shows that extension group acts on a suitable hyperbolic space, in fact, the result is hierarchically hyperbolic. 
This in turn again implies a linear progress result for sampling using stationary measures for random walks on $SL(X, q)$.

\subsection*{Future Directions}
Our results lay the preliminary ground work for more refined questions regarding distortion statistics for a random sampling in the contexts we consider.
We will outline one such direction here.

\smallskip
Organising distortions by scale we may ask for the explicit statistics of distortion along typical geodesics.
An example of this nature covered in the literature is the case of a non-uniform lattice in $SL(2,\mathbb{R})$, more generally a non-uniform lattice in $\mathrm{Isom}(H^n)$. 
Because of the presence of parabolic elements, a non-uniform lattice in $SL(2,\mathbb{R})$ is distorted in $\mathbb{H}^2$ under the orbit map.
The orbit is confined entirely to some thick part and never enters horoballs that project to cusp neighbourhoods in the quotient hyperbolic surface. 
The hyperbolic geodesic segment that joins pairs of orbit points that differ by a power of a parabolic is logarithmic in this power.
This gives rise to the exponential distortion in this context.
For example, when the lattice is $SL(2,\mathbb{Z})$ the distortion statistics can be interpreted as the statistics of continued fraction coefficients.
Similar distortion coming from a \texttt{"}parabolic\texttt{"} source arises in mapping class groups.
To see some analysis of the distortion statistics in such examples we refer the reader to \cite{Gad-Mah-Tio1}, \cite{Gad-Mah-Tio2} and \cite{Ran-Tio}.

\smallskip
In the contexts we consider here, the distortion does not have a parabolic source and results analogous to the examples in the above paragraph would be quite interesting.

\section{Linear Progress in fibered hyperbolic 3-manifolds}\label{s.mainprogress}

Let $M$ be a closed hyperbolic 3-manifold that fibres over the circle with fibre a closed orientable surface $\Sigma$ with genus $g \geqslant 2$. 
Fixing a fibre $\Sigma$, the manifold $M$ can be realised as a mapping torus $\Sigma \times  [0,1]/ \sim$, where $\Sigma \times \{0\}$ has been identified with $\Sigma \times \{1 \} $ by a pseudo-Anosov monodromy $f$, that is, by a mapping class $f$ of $\Sigma$ that is pseudo-Anosov.

\smallskip
Passing to the universal covers, the inclusion of $\Sigma$ in $M$ as the fibre $\Sigma \times \{0 \}$ induces an inclusion of the universal cover $\mathbb{H}^2$ of the fibre $\Sigma$ to the universal cover $\mathbb{H}^3$ of $M$. 
In the hyperbolic metrics on $\mathbb{H}^2$ and $\mathbb{H}^3$, this inclusion is distorted. 
Nevertheless, Cannon-Thurston \cite{Can-Thu} proved that there is limiting behaviour at infinity despite distortion.
More precisely, they showed that the inclusion induces a continuous map from $S^1 = \partial_\infty \mathbb{H}^2$ to $S^2 = \partial_\infty \mathbb{H}^3$, and moreover this map is surjective.
Thus, it is implicit that the image in $\mathbb{H}^3$ of any hyperbolic geodesic ray $\gamma$ in $\mathbb{H}^2$ converges to a point in $S^2 = \partial_\infty \mathbb{H}^3$, even though the image need not be a quasi-geodesic because of the distortion.

\subsection{Sampling geodesics} 
Our first notion of sampling involves the hyperbolic geodesic flow on $T^1 \Sigma$. 
Let $g_t$ be the hyperbolic geodesic flow on $T^1 \Sigma$ and let $\mu_{\lio}$ be the $g_t$-invariant Liouville measure on $T^1 \Sigma$. 
Let $\pi: T^1 \Sigma\to \Sigma$ be the canonical projection.
We adopt the convention that when we mention a hyperbolic geodesic ray $\gamma$, we mean the projection $\pi(g_t v) \, : \, t \geqslant 0$ for some $v \in T^1 \Sigma$. 

\smallskip
Let $D_{\hyp}$ be the hyperbolic metric on $\mathbb{H}^3$. 
Sections~\ref{s.mainprogress} to~\ref{s.mainproof1} will be concerned with a detailed proof of the following theorem (which will also serve as a blueprint for the analogous result in other settings).

\begin{theorem}\label{t.linear-progress}
There exists a constant $k > 0$ such that for $\mu_{\lio}$-almost every $v \in T^1 \Sigma$, the corresponding hyperbolic geodesic ray $\gamma = \pi(g_t v)$ satisfies
\[
D_{\hyp} (\gamma_0, \gamma_T)  > k T
\]
for all $T$ sufficiently large depending on $v$. 
\end{theorem}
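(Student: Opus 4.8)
The plan is to realize the fibre surface $\Sigma$ with a singular flat metric coming from a quadratic differential $q$ that is stretched by the pseudo-Anosov monodromy $f$, so that $f$ acts affinely by a diagonal matrix $\mathrm{diag}(\lambda,\lambda^{-1})$. Lifting to universal covers, $\widetilde\Sigma$ with the lifted flat metric is quasi-isometric to $\mathbb H^2$, and the mapping torus structure gives a preferred way to move a set in one fibre to an adjacent fibre via $f$. The first, geometric step is to build for each "time slice" $n \in \mathbb Z$ (i.e.\ each copy $\Sigma \times \{n\}$) a family of shadow-like subsets of $\mathbb H^3$: starting from a geodesic segment $\sigma$ in the base fibre, take its nested shadow $\shadow(\sigma)$, and then form the union over all fibres of the forward/backward images $f^{\pm n}(\shadow(\sigma))$. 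The key design requirement, following the ladder idea of \cite{Mit}, is monotonicity: if $\sigma' \subset \sigma$ in the base fibre then the corresponding ladder from $\sigma'$ contains the ladder from $\sigma$. This lets a nesting of shadows in the fibre certify that one has moved a definite distance in $\mathbb H^3$.

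Second, the dynamical input. Call a unit tangent vector $v \in T^1\Sigma$ \emph{good} if the hyperbolic geodesic segment it spawns stays close (fellow-travels) to a long flat saddle connection making angle near $\pi/4$ with both foliations of $q$; equivalently $v$ spends a definite proportion of time in a fixed compact set $K \subset T^1\Sigma$ encoding this. Such a long balanced segment has the feature that \emph{all} its $f^{\pm n}$-images remain long (the diagonal action cannot shrink a segment that is balanced between the two eigendirections below a fixed multiple of its length), so the ladder above it is genuinely "thick", and passing from the shadow at one end of the segment to the shadow at the other end costs a distance in $D_{\hyp}$ that is linear in the flat length of the segment — this is exactly the progress certificate. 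The set of such $v$ has positive Liouville measure (it is an open condition with nonempty interior, translated to $\mathbb H^2$ via Cannon--Thurston / the boundary identification). By ergodicity of the Liouville geodesic flow and the Birkhoff ergodic theorem, for $\mu_{\lio}$-a.e.\ $v$ the orbit $g_t v$ spends a definite asymptotic proportion $\rho>0$ of time satisfying the good condition; chaining together the progress certificates from these good sub-segments, and using that the ladders are nested so contributions do not cancel, yields $D_{\hyp}(\gamma_0,\gamma_T) > kT$ for all large $T$, with $k$ proportional to $\rho$ times the per-unit-length progress rate.

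The main obstacle is the first, geometric part: proving that the ladder construction is genuinely "quasiconvex from the inside" in $\mathbb H^3$, i.e.\ that nesting of fibre shadows forces a quantitatively linear separation in $D_{\hyp}$, and that this lower bound is robust under the distortion of $\widetilde\Sigma \hookrightarrow \mathbb H^3$. Concretely one must control how the ladder behaves across the "jumps" between fibres under $f$ — the monodromy is not an isometry of the flat metric, only an affine map — so one needs the balanced-angle hypothesis to guarantee the images of the certifying segment stay long in every fibre, and then a distance estimate in $\mathbb H^3$ (using its hyperbolicity, along the lines of Cannon--Thurston's arguments, or Mitra's flaring/ladder estimates) to convert "the fibre shadow has shrunk" into "we have travelled far in $\mathbb H^3$". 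The ergodic-theoretic half, by contrast, is essentially routine once the positive-measure good set and the additivity of the certificates are in place.
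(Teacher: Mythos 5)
Your proposal is correct and follows essentially the same route as the paper: singular flat/solv geometry with the pseudo-Anosov acting affinely, Mitra-style ladders over nested flat shadows, balanced ("good") segments whose slopes stay away from the stable/unstable directions serving as progress certificates, and ergodicity of the Liouville geodesic flow to guarantee a positive asymptotic frequency of such certificates. The geometric step you flag as the main obstacle is exactly what the paper's Lemmas on quasi-convexity of ladders, nesting of shadows, and separation of ladders along good segments supply.
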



Before beginning with the proof in earnest, we also want to discuss a different method of sampling random geodesics in the fiber surface.

For this second notion, we consider non-elementary random walks on $\pi_1(\Sigma)$. 
Let $\mu$ be a probability distribution on $\pi_1(\Sigma)$. 
A sample path of length $n$ for a $\mu$-random walk on $\pi_1(\Sigma)$ is the random group element $w_n$ given by $w_n = g_1 g_2 \cdots g_n$, where each $g_j$ is sampled by $\mu$, independently of the preceding steps. 
A random walk is said to be \emph{non-elementary} if the semi-group generated by the support of $\mu$ contains a pair of hyperbolic elements with distinct stable and unstable fixed points on $S^1 = \partial_\infty \mathbb{H}^2$. 

\smallskip
It is a classical fact (generalised by Furstenberg to many more settings) that a non-elementary 
random walk on $\pi_1(\Sigma)$, when projected to $\mathbb{H}^2$ using the group action, converges to infinity. That is, for any base-point $x$ and for almost every infinite sample path $\omega = (w_n)$, the sequence $w_n x$ in $\mathbb{H}^2$ converges to a point of $S^1$. 
For a more detailed account of the theory, we refer the reader to \cite{Mah-Tio}.

\smallskip
The almost sure convergence to $S^1$ defines a stationary measure $\nu$ on it. 
We may then use $\nu$ to sample geodesic rays as follows. 
Using geodesic convergence to infinity, we can pull $\nu$ back to a measure on the unit tangent circle $T^1_x \mathbb{H}^2$.
We use this pull-back to sample hyperbolic geodesic rays starting from $x$ and consider the question of whether a typical ray makes linear progress.

\smallskip
A famous conjecture of Guivarc'h--Kaimanovich--Ledrappier (see \cite[Conjecture 1.21]{Der-Kle-Nav}) states that for any finitely supported non-elementary random walk on $\pi_1(\Sigma)$ the associated stationary measure $\nu$ is singular with respect to the Lebesgue measure on $S^1$. As such, linear progress of $\nu$-typical ray cannot be deduced from \Cref{t.linear-progress}.

\smallskip
For technical reasons, it is more convenient to simultaneously consider forward and backward random walks. 
The backward random walk is simply the random walk with respect to the reflected measure $\hat{\mu}(g) = \mu(g^{-1})$. 
The space of bi-infinite sample paths, denoted by $\Omega$, has a natural invertible map on it given by the right shift $\sigma$.
The product measure $\nu \times \hat{\nu}$, where $\hat{\nu}$ is the stationary measure for the reflected random walk, is $\sigma$-ergodic.

\smallskip
Using the orbit map, we may equip $\pi_1(\Sigma)$ with the hyperbolic metric induced from $\mathbb{H}^2$, that is, we may consider the functions $f_n (\omega) = d_{\hyp} (x, w_n x)$ along sample paths in $\Omega$.  
Similarly, by the orbit map to $\mathbb{H}^3$, we may also equip $\pi_1(\Sigma)$ with the function induced by the hyperbolic metric from $\mathbb{H}^3$, that is, we may consider the functions $F_n(\omega) = D_{\hyp} (x, w_n x)$ along sample paths in $\Omega$. 

\smallskip
A distribution $\mu$ on $\pi_1(\Sigma)$ is said to have finite first moment for a word metric if 
\[
\sum\limits_{g \in \pi_1(\Sigma)} d_{\pi_1(\Sigma)} (1, g) \, d \mu(g) < \infty.
\]
The action of $\pi_1(\Sigma)$ on $\mathbb{H}^2$ is co-compact.
Hence, a word metric on $\pi_1(\Sigma)$ is quasi-isometric to the hyperbolic metric $d_{\hyp}$ induced on it by the orbit map. 
Thus, finite first moment for a word metric is equivalent to finite first moment for $d_{\hyp}$ and so we no longer need to specify the metric.
It follows that if $\mu$ has finite first moment then the functions $f_n$ are $\ell^1$ with respect to $\nu \times \hat{\nu}$. 
Since $f_n \geqslant F_n$, we deduce that the functions $F_n$ are $\ell^1$. 

\smallskip
By the triangle inequality, the function sequences $f_n$ and $F_n$ are sub-additive along sample paths.
Hence, by Kingman's sub-additive ergodic theorem, there exists constants $a_1 \geqslant 0$ and $a_2 \geqslant 0$ such that for almost every bi-infinite sample path $\omega$ 
\[
\lim_{n \to \infty} \frac{f_n(\omega)}{n} = a_1 \hskip 10pt \text{and} \hskip 10pt  \lim_{n \to \infty} \frac{F_n(\omega)}{n} = a_2.
\]
The constants are called drifts. 

\smallskip
To argue that the drifts are positive, we recall \cite[Theorem 1.2]{Mah-Tio}. 
\begin{theorem}[Maher--Tiozzo]\label{t.Mah-Tio} 
Suppose that a countable group $G$ acts by isometries on a separable Gromov hyperbolic space $X$ and let $x$ be any point of $X$. Let $\mu$ be a non-elementary probability distribution on $G$, that is, the semigroup generated by the support of $\mu$ contains a pair of hyperbolic isometries of $X$ with distinct fixed points on the Gromov boundary $\partial X$. Further suppose that $\mu$ has finite first moment in $X$, that is, 
\[
\sum\limits_{g \in G} d_X (x, gx) \, d_\mu(g) < \infty.
\]
Then there is a constant $L >0$ such that for any base-point $x \in X$ and for almost every sample path $\omega = (w_n)_{n \in \mathbb{N}}$ 
\[
\lim_{n \to \infty} \frac{d_X(x, w_n x) }{n} = L.
\]
\end{theorem}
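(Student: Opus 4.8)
The plan is to separate the \emph{existence} of the drift from its \emph{positivity}. Existence is soft: since $\mu$ has finite first moment, $n \mapsto d_X(x, w_n x)$ is an integrable subadditive cocycle over the shift on the one-sided path space, so Kingman's subadditive ergodic theorem produces a constant $L \ge 0$ with $d_X(x, w_n x)/n \to L$ almost surely (constant by ergodicity of the shift, and independent of $x$ since changing basepoint alters $d_X(x,w_nx)$ by a bounded amount). All the work is in proving $L > 0$, and this is the point at which non-elementarity must be used.

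For positivity I would first record two standard consequences of non-elementarity. The semigroup generated by $\operatorname{supp}\mu$ contains two hyperbolic isometries with distinct fixed points on $\partial X$; a persistence estimate for Gromov products (showing $(w_n x \mid w_m x)_x \to \infty$ almost surely) then gives that $w_n x$ converges to a boundary point $\xi^{+} \in \partial X$, with hitting law a $\mu$-stationary measure $\nu$, and symmetrically the reflected walk converges to $\xi^{-} \in \partial X$ with law $\hat\nu$. Secondly, $\nu$ and $\hat\nu$ have no atoms: by stationarity, an atom of maximal mass would force the (finite, nonempty) set of maximal-mass atoms to be invariant under the semigroup generated by $\operatorname{supp}\mu$, which is impossible because that semigroup contains two independent hyperbolic isometries and so cannot preserve a nonempty finite subset of $\partial X$. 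On the two-sided path space $(\Omega, \mathbb{P}, \sigma)$ the future determines $\xi^{+}$ and the independent past determines $\xi^{-}$, with respective laws $\nu$ and $\hat\nu$, so non-atomicity yields $\xi^{+} \ne \xi^{-}$ almost surely.

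Next I would convert this into linear escape. Fix a bi-infinite geodesic $\gamma(\omega)$ from $\xi^{-}$ to $\xi^{+}$. Using the standard inequality $(\xi^{+} \mid \xi^{-})_{w_n x} \asymp_{\delta} d_X(w_n x, \gamma)$ together with the shift-equivariance $(\xi^{+} \mid \xi^{-})_{w_n x} = \rho(\sigma^n \omega)$ for the almost-everywhere finite function $\rho(\omega) = (\xi^{+}(\omega) \mid \xi^{-}(\omega))_x$, Birkhoff's theorem applied to $\mathbf{1}[\rho < R]$ shows that, for $R$ large, $w_n x$ stays within a bounded distance of $\gamma$ for a set of times $n$ of positive density, at which the nearest-point projections onto $\gamma$ march off toward $\xi^{+}$. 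The hard part -- and the technical heart of the Maher--Tiozzo theorem -- is to upgrade ``definite progress at a positive density of times'' to genuine \emph{linear} speed, since the information assembled so far is still consistent with a sublinear escape of order $\sqrt{n}$. I would close this gap with a regeneration-type argument: isolate stopping times $\tau_1 < \tau_2 < \cdots$ at which the walk enters a shadow it never subsequently leaves, verify that these occur with positive asymptotic density and that the displacements $d_X(w_{\tau_k} x, w_{\tau_{k+1}} x)$ admit a uniform positive lower bound in expectation (finite first moment keeping them integrable), and sum to obtain $L \ge c > 0$. It is precisely here that the weakness of the hypothesis bites: with only a first moment -- no finite entropy, no exponential moment -- one cannot take the shorter entropy-theoretic route and is forced into this geometric argument.
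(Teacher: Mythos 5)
You should first be aware that the paper does not prove this statement at all: it is quoted verbatim from Maher--Tiozzo (\cite[Theorem 1.2]{Mah-Tio}) and used as a black box. The closest the paper comes to an argument is the sketch inside the proof of Theorem~\ref{t.random}, which itself invokes Maher--Tiozzo's linear progress and sublinear tracking theorems and only supplies the shadow/nesting machinery needed to transfer progress to the ambient metric. So there is no in-paper proof to match yours against; the relevant comparison is with the original Maher--Tiozzo argument, and your outline does follow its broad architecture correctly: Kingman for existence of the drift, boundary convergence and non-atomicity of the hitting measures from non-elementarity, the two-sided path space with the reflected walk, and the Birkhoff estimate on $\rho(\omega)=(\xi^+\mid\xi^-)_x$ giving a positive density of times at which $w_nx$ lies near the tracking (quasi-)geodesic. (One small caution: in a separable but non-proper hyperbolic space a bi-infinite geodesic from $\xi^-$ to $\xi^+$ need not exist, so one should phrase everything via Gromov products or quasi-geodesics, as Maher--Tiozzo do.)

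The genuine gap is exactly where you place it, and naming it does not fill it. The step from ``positive density of times within distance $R$ of $\gamma(\omega)$, with projections tending to $\xi^+$'' to ``linear speed'' is the entire content of the theorem: nothing assembled up to that point rules out the projections advancing like $\sqrt{n}$. Your proposed repair --- stopping times $\tau_k$ at which the walk enters a shadow it never subsequently exits, with positive asymptotic density and a definite separation between consecutive shadow boundaries --- is the right shape (it is essentially the nested-shadow argument Maher--Tiozzo run, and the same skeleton the paper itself uses for the transfer in Sections~\ref{s.mainprogress}--\ref{s.mainproof1}), but none of its three claims is verified: (i) that such regeneration times exist and are measurable stopping times, (ii) that they occur with positive density (this needs a quantitative decay estimate for the $\nu$-measure of shadows, or an ergodic-theoretic substitute on the two-sided space), and (iii) that consecutive shadows are separated by a uniform constant rather than merely having displacements of positive expectation --- the latter alone does not lower-bound $d_X(x,w_nx)$, since distances do not add along a path; it is precisely the never-exited nesting that makes the increments cumulative. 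As written, then, your proposal is a faithful roadmap of the known proof rather than a proof: the existence half is complete, but the positivity half stops at the point where all the difficulty is concentrated.
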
 
In our case, the actions of $\pi_1(\Sigma)$ on $\mathbb{H}^2$ and $\mathbb{H}^3$ are both non-elementary. Since $D_{\hyp} \leqslant d_{\hyp}$ on $\pi_1(\Sigma)$, inite first moment for $d_{\hyp}$ implies finite first moment for $D_{\hyp}$. 
By \Cref{t.Mah-Tio}, the drifts $a_1$ and $a_2$ are both positive.

\smallskip
Using the measure $\nu \times \hat{\nu}$, we can sample ordered pairs of points at infinity. 
With probability one, these points are distinct and hence determine a bi-infinite geodesic in $\mathbb{H}^2$. 
We parameterise the geodesic by $\gamma: (-\infty, \infty) \to \mathbb{H}^2$ such that $\gamma_0$ is the point of $\gamma$ closest to the base-point and $\gamma_t$ converges to the point at infinity sampled by $\nu$. 
As a consequence of the positivity of the drifts, it follows directly that
\begin{theorem}\label{t.random} 
Let $\mu$ be a non-elementary probability distribution on $\pi_1(\Sigma)$ with finite first moment and let $\hat{\mu}$ be the reflected distribution.
Let $\nu$ and $\hat{\nu}$ be the stationary measures on $S^1$ for $\mu$ and $\hat{\mu}$-random walks.
Then there exists a constant $K \geqslant 1$ such that for $\nu \times \hat{\nu}$-almost every $\gamma$, there is a $T_\gamma$ such that for all $T > T_\gamma$, 
\[
D_{\hyp} (\gamma_0, \gamma_T) \geqslant \frac{T}{K}.
\]
\end{theorem}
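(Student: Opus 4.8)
The plan is to reduce the statement to the positivity of the two drifts $a_1 = \lim_n f_n/n$ and $a_2 = \lim_n F_n/n$ (established above via \Cref{t.Mah-Tio}) together with the standard sublinear tracking property of random walks on hyperbolic spaces. We work on the space $\Omega$ of bi-infinite sample paths with its ergodic measure, call it $\mathbb{P}$. Recall that the map sending $\omega$ to the pair $(\gamma_+(\omega),\gamma_-(\omega))$ of boundary limits — the forward limit $\lim_n w_n x$ and the backward limit of the reflected walk — pushes $\mathbb{P}$ forward to $\nu\times\hat\nu$. Hence it is enough to prove the desired lower bound, for $\mathbb{P}$-almost every $\omega$, for the geodesic $\gamma$ from $\gamma_-(\omega)$ to $\gamma_+(\omega)$ carrying the parametrisation in the statement. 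Only the forward endpoint is really used; the backward walk is present solely to pin $\gamma$ down as a bi-infinite geodesic.

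Fix a $\mathbb{P}$-typical $\omega$ with $f_n/n\to a_1>0$, $F_n/n\to a_2>0$ and, by sublinear tracking \cite{Mah-Tio}, $d_{\hyp}(w_n x,\eta)=o(n)$, where $\eta=[x,\gamma_+(\omega))$ is the geodesic ray from the basepoint. Let $p_n\in\eta$ be a nearest point to $w_n x$; then $\bigl|\,d_{\hyp}(x,p_n)-a_1 n\,\bigr|\le d_{\hyp}(w_n x,p_n)+\bigl|\,f_n-a_1 n\,\bigr|=o(n)$. Since $\eta$ and the sub-ray $\gamma_{[0,\infty)}$ are asymptotic geodesic rays in $\mathbb{H}^2$ they converge, so there is a constant $c=c(\omega)$ with $d_{\hyp}(\gamma_t,\eta(t+c))\to 0$; also $d_{\hyp}(x,\gamma_0)=d_{\hyp}(x,\gamma)=:C_\omega$ is finite.

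Now, given $T$, set $n=n(T)=\lceil T/a_1\rceil$, so $n\to\infty$ and $a_1 n\in[T,T+a_1)$. Then $d_{\hyp}(x,p_n)=T+o(T)$, whence, parametrising $\eta$ by arc length from $x$, $d_{\hyp}(p_n,\gamma_T)\le d_{\hyp}(p_n,\eta(T+c))+d_{\hyp}(\eta(T+c),\gamma_T)=o(T)$, and therefore $d_{\hyp}(w_n x,\gamma_T)\le d_{\hyp}(w_n x,p_n)+d_{\hyp}(p_n,\gamma_T)=o(T)$. Pushing into $\mathbb{H}^3$ by the $1$-Lipschitz equivariant fibre inclusion (the source of the inequality $D_{\hyp}\le d_{\hyp}$) and using the triangle inequality,
\[
D_{\hyp}(\gamma_0,\gamma_T)\ \ge\ D_{\hyp}(x,w_n x)-D_{\hyp}(x,\gamma_0)-D_{\hyp}(w_n x,\gamma_T)\ \ge\ F_n(\omega)-C_\omega-o(T)\ =\ a_2 n+o(n)-C_\omega-o(T).
\]
Since $a_2 n\ge (a_2/a_1)T$, the right-hand side is at least $\tfrac12(a_2/a_1)T$ once $T>T_\gamma$. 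Finally $f_n\ge F_n$ forces $a_1\ge a_2>0$, so $K:=2a_1/a_2\ge 1$ works.

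The main point requiring input beyond bookkeeping is the sublinear tracking estimate $d_{\hyp}(w_n x,[x,\gamma_+(\omega)))=o(n)$: mere convergence $w_n x\to\gamma_+$ does not suffice, as the walk could a priori overshoot the ray by a linear amount, and it is precisely this that must be ruled out. For the action of $\pi_1(\Sigma)$ on $\mathbb{H}^2$ this is available from \cite{Mah-Tio}; everything else is thin-triangle geometry in $\mathbb{H}^2$ and the two drift limits. A second point to record carefully is the identification coupling the $\nu\times\hat\nu$-sampled geodesic with the bi-infinite sample path, which is what lets the almost-sure statements on $\Omega$ pass to almost-sure statements about $\gamma$.
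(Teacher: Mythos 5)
Your proof is correct, and it realises precisely the \emph{direct} derivation the paper asserts in one sentence immediately before the statement, namely that Theorem~\ref{t.random} ``follows directly'' from the positivity of the drifts $a_1,a_2$. You have isolated the two genuine inputs --- positivity of $a_1,a_2$ via Theorem~\ref{t.Mah-Tio}, and sublinear tracking $d_{\hyp}(w_nx,[x,\gamma_+(\omega)))=o(n)$ from \cite{Mah-Tio} --- and the $\mathbb{H}^2$ bookkeeping (nearest-point projections $p_n$, the asymptotic-rays shift $c(\omega)$, and the triangle-inequality transfer into $\mathbb{H}^3$ via $D_{\hyp}\leqslant d_{\hyp}$) is sound; your remark that sublinear tracking, not mere boundary convergence, is essential is also on point. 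However, the paper's detailed proof of Theorem~\ref{t.random}, given at the end of Section~\ref{s.mainproof1}, takes a genuinely different route. Rather than black-boxing the $\mathbb{H}^3$-drift $a_2$, it constructs a good flat segment of controlled tilted length, shows that the shadow of that segment has positive $\nu$-measure at infinity (via stationarity of $\nu$ and north--south dynamics of a hyperbolic semigroup element), and then invokes ergodicity of the shift on bi-infinite sample paths to conclude that the tracked geodesic $\gamma_\omega$ passes with positive asymptotic density through translates of a fixed ``good'' set; each passage yields a definite separation of nested ladders, exactly as in the proof of Theorem~\ref{t.linear-progress}. Your argument buys brevity and bypasses the shadow/ladder machinery entirely; the paper's version makes the $\mathbb{H}^3$-side self-contained within the geometric apparatus it develops anyway and runs in strict parallel to the Liouville-measure case. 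Both arguments rely on Maher--Tiozzo on the $\mathbb{H}^2$ side (for $a_1>0$ and sublinear tracking), so neither is more self-contained there.
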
 
Since the treatment in Maher--Tiozzo is quite general, we will provide a direct sketch for the positivity of the drifts later in the paper (see Section~\ref{s.mainproof1}).

\smallskip
The hyperbolic ray from the base-point $x$ that converges to the same point at infinity as the geodesic $\gamma$, is strongly asymptotic to $\gamma$. 
Thus, we deduce:
\begin{theorem}
Let $\mu$ be a non-elementary probability distribution on $\pi_1(\Sigma)$ with finite first moment and let $\nu$ be the stationary measure on $S^1$ for the $\mu$-random walk.
Then there exists $K \geqslant 1$ such that for $\nu$-almost every $\lambda \in S^1$, there exists $T_\lambda$ such that for any $T > T_\lambda$ the point $\gamma_T$ along the hyperbolic geodesic ray from $x$ that converges to $\lambda$, we have
\[
D_{\hyp} (\gamma_0, \gamma_T) \geqslant \frac{T}{K}.
\]
\end{theorem}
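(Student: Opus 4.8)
The plan is to reduce this statement to \Cref{t.random} by a Fubini argument, using the fact that the image in $\mathbb{H}^3$ of a ray in $\mathbb{H}^2$ is, up to bounded error, determined by the image of the bi-infinite geodesic sharing its forward endpoint.

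First I would sample $\lambda$ with $\nu$ and, independently, a second point $\hat\lambda$ with $\hat\nu$. With probability one $\lambda\neq\hat\lambda$, so the pair determines a bi-infinite geodesic $\gamma'$ in $\mathbb{H}^2$, parametrised as in \Cref{t.random} (so that $\gamma'_0$ is the point of $\gamma'$ closest to $x$ and $\gamma'_t\to\lambda$). \Cref{t.random} provides a uniform $K\geqslant1$ such that for $\nu\times\hat\nu$-almost every pair there is a $T_{\gamma'}$ with $D_{\hyp}(\gamma'_0,\gamma'_T)\geqslant T/K$ for all $T>T_{\gamma'}$. By Fubini, for $\nu$-almost every $\lambda$ the set of $\hat\lambda$ realising this has full $\hat\nu$-measure, in particular is non-empty; so for $\nu$-almost every $\lambda$ I may fix one such $\hat\lambda$ and the resulting bi-infinite geodesic $\gamma'$.

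Next I would compare the hyperbolic ray $\gamma$ from $x$ to $\lambda$ with the forward half of $\gamma'$. Since both converge to $\lambda$, they are strongly asymptotic in $\mathbb{H}^2$: there is a fixed real number $s_0$ with $d_{\hyp}(\gamma_t,\gamma'_{t+s_0})\to 0$ as $t\to\infty$. The lifted fibre inclusion $\mathbb{H}^2\to\mathbb{H}^3$ is a lift of a smooth map of compact manifolds, hence Lipschitz: $D_{\hyp}(p,q)\leqslant C\,d_{\hyp}(p,q)$ for all $p,q\in\mathbb{H}^2$. Therefore $D_{\hyp}(\gamma_t,\gamma'_{t+s_0})$ is bounded by a constant $C_1$ for $t$ large, while $D_{\hyp}(\gamma_0,\gamma'_{s_0})$ and $D_{\hyp}(\gamma'_0,\gamma'_{s_0})$ are fixed finite constants since $s_0$ is fixed. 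Combining the triangle inequality
\[
D_{\hyp}(\gamma_0,\gamma_T)\geqslant D_{\hyp}(\gamma'_0,\gamma'_{T+s_0})-D_{\hyp}(\gamma'_0,\gamma'_{s_0})-D_{\hyp}(\gamma_0,\gamma'_{s_0})-D_{\hyp}(\gamma_T,\gamma'_{T+s_0})
\]
with $D_{\hyp}(\gamma'_0,\gamma'_{T+s_0})\geqslant(T+s_0)/K$ from \Cref{t.random} (valid once $T+s_0>T_{\gamma'}$) yields $D_{\hyp}(\gamma_0,\gamma_T)\geqslant T/K-(\text{fixed constant})\geqslant T/(2K)$ for all $T$ past some threshold $T_\lambda$. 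Replacing $K$ by $2K$ in the statement finishes the argument.

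I do not expect a serious obstacle. The only points needing care are the Fubini step, which uses merely that $\hat\nu$ is a probability measure, and the verification that the phase shift $s_0$ — and hence all the error constants above — are independent of $T$; this holds because $s_0$ is fixed once $\lambda$ and the chosen $\hat\lambda$ are fixed, and $d_{\hyp}(\gamma_t,\gamma'_{t+s_0})\to 0$ quantitatively (indeed exponentially) in $\mathbb{H}^2$.
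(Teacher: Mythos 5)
Your proposal is correct and takes essentially the same approach as the paper: the paper's own proof is the one-sentence observation that the ray from $x$ to $\lambda$ is strongly asymptotic to the bi-infinite geodesic of \Cref{t.random}, with the Fubini reduction left implicit. You have simply spelled out the details (the Fubini step, the Lipschitz estimate for the fibre inclusion, and the triangle-inequality bookkeeping), all of which are sound.
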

In other words, a typical ray in the fibre $\mathbb{H}^2$ makes linear progress in $\mathbb{H}^3$. 

\section{Flat and Solv geometry} 

In order to prove \Cref{t.linear-progress} and \Cref{t.random}, we analyse the geometry by starting with the flat and singular solv geometry.
A pseudo-Anosov map on $\Sigma$ acting on Teichm\"uller space has an invariant axis that it translates along. 
We may then consider a holomorphic quadratic differential $q$ along the axis and use contour integration of a square root of it to equip $\Sigma$ with a singular flat metric.
The singular flat metric lifted to the universal cover $\mathbb{H}^2$ of $\Sigma$ will be denoted by $d_{\fl}$. 
The singular flat metrics on all other fibres can be derived by the action of the corresponding affine maps along the Teichm\"uller axis. 
Put together, these metrics equip the universal cover $\mathbb{H}^3$ of $M$ with a singular solv metric which we denote by $d_{\sol}$. 

\subsection{Optimal shadows} 
We will define below the \emph{optimal shadow} associated to a flat geodesic based at a point on it.
The orientation on $\mathbb{H}^2$ induces a cyclic order on the unit tangent circle at any point in $\mathbb{H}^2$. 
We will make use of this cyclic order in the description.

\smallskip
Let $\beta: [0, T] \to \mathbb{H}^2$ be a parameterised flat geodesic and let $0 < t \leqslant T$. 
As a preliminary, we define the \emph{lower} and \emph{upper} unit tangent vectors to $\beta$ at $\beta_t$. 
Let $\epsilon > 0$ be small enough so that the segment $[\beta_{t- \epsilon}, \beta_{t+\epsilon}]$ contains no singularities except possibly at $\beta_t$. 
We then define the lower unit tangent vector $v^-(\beta_t)$ to be the unit tangent vector to $[\beta_{t-\epsilon}, \beta_t]$ at the point $\beta_t$. 
Similarly, we define the upper unit tangent vector $v^+(\beta_t)$ to be the unit tangent vector to $[\beta_t, \beta_{t+\epsilon}]$ at the point $\beta_t$. 
Note that if $\beta_t$ is a regular point then $v^-(\beta_t) = v^+(\beta_t)$. 

\smallskip
We will first show the existence of an \emph{lower perpendicular}. 
Let $\beta: [0,T] \to \mathbb{H}^2$ be a parameterised flat geodesic and let $0< t \leqslant T$. 
We say that a flat geodesic segment $\alpha: (-s, s) \to \mathbb{H}^2$ is the \emph{lower perpendicular} to $\beta$ at $\beta_t$ if
\begin{itemize}
\item $\alpha_0 = \beta_t$; and
\item the unit tangent vectors $v^{\pm}(\alpha_0)$ make an angle of $\pi/2$ with the lower tangent $v^-(\beta_t)$. 
\end{itemize} 

Let $\beta'$ be a bi-infinite perpendicular to $\beta$ at $\beta_t$. 
We denote the component of $\mathbb{H}^2 - \beta'$ that contains $\beta_0$ by $C^-(\beta')$. 

\begin{lemma}
Let $\beta: [0,T] \to \mathbb{H}^2$ be a parameterised flat geodesic and let $0 < t \leqslant T$. There exists a bi-infinite flat geodesic $\beta_\perp^t$ such that 
\begin{itemize}
\item $\beta_\perp^t$ is the lower perpendicular to $\beta$ at $\beta_t$; and
\item for any bi-infinite geodesic $\beta'$ perpendicular to $\beta$ at $\beta_t$
\[
C^-(\beta_\perp^t) \subseteq C^- (\beta').
\]
\end{itemize}
\end{lemma}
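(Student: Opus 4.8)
The plan is to construct $\beta_\perp^t$ as a limit of perpendicular segments of increasing length, extracting the correct one-sided component by a minimality argument. First I would observe that the lower tangent vector $v^-(\beta_t)$ determines a unique unit normal direction $n^-(\beta_t)$ on the side containing $\beta_0$ — concretely, the rotation of $v^-(\beta_t)$ by $\pi/2$ in the cyclic order on $T^1_{\beta_t}\mathbb{H}^2$ pointing toward $\beta_0$. Starting a flat geodesic ray $\alpha^+$ at $\beta_t$ in direction $n^-(\beta_t)$ and another ray $\alpha^-$ in the opposite direction $-n^-(\beta_t)$, the concatenation is an \emph{a priori} candidate for a bi-infinite flat geodesic. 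The subtlety is that a flat geodesic ray emanating in a fixed direction need not be unique or bi-infinite once it hits a singularity: at a cone point of angle $>2\pi$ there are several ways to continue straight. So the first key step is to show that among all such continuations there is a canonical one that stays "closest to $\beta$'', i.e.\ that at each singular point we always turn so as to keep the part of the plane containing $\beta_0$ as small as possible.

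The core of the argument is the following comparison. Given any bi-infinite perpendicular $\beta'$ to $\beta$ at $\beta_t$, it agrees with our candidate $\beta_\perp^t$ on an initial (two-sided) segment around $\beta_t$ — at least until the first singularity encountered on either side. At that singularity, $\beta'$ makes some choice of straight continuation; I claim that the continuation of $\beta_\perp^t$ can be taken to be the one that bounds a region $C^-$ contained in $C^-(\beta')$. This is a local statement about cone points: at a cone point $p$ of total angle $\theta \geq 2\pi$, the geodesic continuations of an incoming segment are parameterized by how the angle $\theta - \pi$ "overhang'' is split between the two sides; choosing to put \emph{all} the overhang on the side away from $\beta_0$ (equivalently, hugging $\beta_0$'s side as tightly as possible) yields the continuation whose $\beta_0$-side component is minimal, hence contained in that of any other choice. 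Iterating this choice at every singularity (there are only finitely many in any compact piece, by discreteness of the singular set in the universal cover of a closed flat surface), and taking the nested intersection/limit, produces $\beta_\perp^t$ with the desired property $C^-(\beta_\perp^t)\subseteq C^-(\beta')$ for every competitor $\beta'$.

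To make this rigorous I would: (1) fix the normal direction and build the two rays $\alpha^\pm$ by the greedy "hug $\beta_0$'s side'' rule at each singular point, using non-positive curvature (the flat metric on $\mathbb{H}^2$ is $\mathrm{CAT}(0)$ away from cone points, and cone points have angle $\geq 2\pi$) to guarantee each ray is an infinite geodesic and does not self-intersect; (2) verify the concatenation $\beta_\perp^t$ is genuinely a bi-infinite flat geodesic — the angle condition at $\beta_t$ is exactly the perpendicularity requirement, and straightness elsewhere is built in; (3) prove the containment by induction on singularities: on the maximal initial segment where $\beta_\perp^t$ and $\beta'$ coincide the containment of components is trivial, and at the first point of divergence the greedy choice is, by the local cone-point analysis, the one minimizing the $\beta_0$-side — so $C^-(\beta_\perp^t)$ stays inside $C^-(\beta')$ past that point, and one continues.

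The main obstacle I anticipate is step (3), specifically making precise the local claim at a cone point that the "all overhang on the far side'' continuation yields the minimal $\beta_0$-side component, and then globalizing this via the induction while controlling the possibility that $\beta'$ and $\beta_\perp^t$ re-diverge and re-converge multiple times. One has to be careful that "$C^-$'' is well-defined (each bi-infinite flat geodesic in $\mathbb{H}^2$ with its $\mathrm{CAT}(0)$ metric separates the plane into exactly two components, which follows because such a geodesic is a proper embedded line), and that the greedy choice is consistent — i.e.\ hugging $\beta_0$'s side locally at each singularity does not, globally, force the curve to wrap around and enclose $\beta_0$. This last point should follow from convexity: the greedy geodesic is a limit of geodesic segments $[\beta'_{-s},\beta_t]\cup[\beta_t,\beta'_s]$ whose $\beta_0$-side components are nested decreasing, and a nested intersection of convex sets in a $\mathrm{CAT}(0)$ space behaves well.
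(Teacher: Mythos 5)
Your proposal is correct and follows essentially the same route as the paper: build the two perpendicular rays by the greedy rule that at each cone point the continuation leaves angle exactly $\pi$ on the side of $\beta_0$, then get minimality of $C^-$ by comparing with any competitor $\beta'$ at its points of divergence. The "re-convergence" worry you flag in step (3) is resolved exactly as you suspect — two geodesics sharing a point cannot bound a bigon in the nonpositively curved flat metric (the paper phrases this via Gauss--Bonnet), so the diverging rays never meet again and the excess regions are simply added to $C^-(\beta_\perp^t)$.
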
 

\begin{proof}
Breaking symmetry, suppose that $\beta_t$ is a regular point. 
Then $\beta$ has exactly two perpendicular directions $v_L, v_R$ at $\beta_t$. 
Using the cyclic order on the unit tangent circle at $\beta_t$, we arrange matters so that $v_L, v^-(\beta_t), v_R$ in that order are counter-clockwise. 

\smallskip
If the flat ray with initial direction $v_L$ is infinite then we set $\beta_\perp^L$ to be this ray. 
So suppose that the ray with initial vector $v_L$ runs in to a singularity $p$ in finite time.
Let $v^-$ be the lower tangent vector to the ray at the point $p$. 
In the induced cyclic order on the unit tangent circle at $p$, we may move clockwise from $-v^-$ till we get a vector $v^+$ that is at an angle $\pi$ from $-v^-$. 
We then extend the initial ray by the flat ray with initial vector $v^+$. 
Continuing iteratively in this manner, we obtain an infinite flat ray which we set to be $\beta_\perp^L$.

\smallskip
We may then carry out an analogous construction with $v_R$ to obtain an infinite flat ray $\beta^\perp_R$ with initial vector $v_R$. In the analogous construction, should we encounter a singularity, we move counter-clockwise from $-v^-$ by an angle of $\pi$ to continue.

\smallskip
We then set $\beta_\perp^t$ to be the union $\beta_\perp^L \cup \beta_\perp^R$. As the angle between $\beta_\perp^L$ and $\beta_\perp^R$ at the point $\beta_t$ is exactly $\pi$, the union is a bi-infinite flat geodesic.

\smallskip
Suppose instead that $\beta_t$ is a singularity. 
Using the induced cyclic order on the unit tangent circle at $\beta_t$, we move clockwise from $-v^-(\beta_t)$ till we are at the vector $v_L$ that is at angle $\pi/2$ from $-v^-(\beta_t)$. Similarly, we move counter-clockwise from $-v^-(\beta_t)$ till we are at the vector $v_R$ that is at angle $\pi/2$ from $-v^-(\beta_t)$.
We now proceed to construct rays $\beta_\perp^L$ (and $\beta_\perp^R$) with initial vectors $v_L$ (respectively $v_R$) exactly as above.
We then set $\beta_\perp^t$ to be again the union $\beta_\perp^L \cup \beta_\perp^R$. 
The angle between $\beta_\perp^L$ and $\beta_\perp^R$ is $\pi$ and hence $\beta_\perp^t$ is a bi-infinite geodesic.

\smallskip
Finally, suppose $\beta'$ is a bi-infinite geodesic perpendicular to $\beta$ at $\beta_t$. If $\beta'$ is distinct from $\beta$, then it diverges from $\beta_\perp^t$ at some singularity. Breaking symmetry, suppose that there is a singularity $p$ along $\beta_\perp^L$ at which $\beta'$ diverges from $\beta_\perp^L$. Then, the angle that $\beta'$ makes at $p$ exceeds $\pi$. 

\smallskip
Let $v^+(\beta') \neq v^+(\beta_\perp^L)$ be the tangent vectors at $p$ to $\beta'$ and $\beta_\perp^L$.
Suppose that the rays with these initial vectors intersect. 
Then the rays give two geodesic segments that bound a bigon. 
As the metric is flat with the negative curvature concentrated at singularities, the presence of this bigon contradicts Gauss-Bonnet. 
Hence, the rays do not intersect. 
Let $s_L$ then be the region bound by this pair of rays and not containing $\beta_0$.

\smallskip
A similar argument applies if there is divergence between $\beta'$ and $\beta_\perp^R$ and yields a region $s_R$ that does not contain $\beta_0$. 

\smallskip
We deduce that $C^-(\beta') = C^-(\beta_\perp^t) \cup s_L \cup s_R$ thus finishing the proof.

\end{proof} 

We also present a slightly less direct construction for $\beta_\perp^t$.
The flat metric on $\Sigma$ has finitely many singularities and so their lifts to $\mathbb{H}^2$ yield a 
countable discrete set in $\mathbb{H}^2$. 
Let $t$ in $(0,T]$ be a time such that there exist $\epsilon > 0$ depending on $t$ such that the segment $[\beta_{t - \epsilon}, \beta_t]$ contains no singularity. 
We may then consider the foliation $\lambda_\perp$ be the foliation that is perpendicular to the segment $[\beta_{t - \epsilon}, \beta_t]$. 
Among such times, we say $t$ is \emph{simple} if the leaf of $\lambda_\perp$ containing $\beta_t$ is bi-infinite and does not contain any singularities. 
By the observation regarding lifts of singularities, the set of simple times is dense (in fact, full measure) in $[0,T]$.
Let $s < t$ be simple times. 
A Gauss-Bonnet argument similar to the one in the proof above shows that $C^- (\beta_\perp^s) \subset C^- (\beta_\perp^t)$. 
Suppose now that $t > 0$ is a time that is not simple. 
We then consider a sequence of simple times $t_n < t$ such that $t_n$ converges to $t$ and define $\beta_\perp^t$ as the limit of the bi-infinite perpendiculars at $\beta_{t_n}$. 
We leave it as a simple exercise to check that this reproduces our definition in the above lemma.

\smallskip
The bi-infinite flat geodesic $\beta_\perp^t$ divides $\mathbb{H}^2$ in to two components. We call the component of $\mathbb{H}^2 - \beta_\perp^t$ that does not contain $\beta_{t- \epsilon}$ the \emph{optimal shadow} of $\beta$ at the point $\beta_t$. We denote the optimal shadow by $\shadow(\beta_t)$. 

\smallskip
A number of easy consequences follow from Gauss-Bonnet. 

\begin{lemma}\label{l.convex-shadow} 
Let $\beta: [0,T] \to \mathbb{H}^2$ be a parameterised flat geodesic and let $0 < t \leqslant T$. 
The optimal shadow $\shadow(\beta_t)$ is convex in the flat metric on $\mathbb{H}^2$. 
\end{lemma}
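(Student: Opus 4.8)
The plan is to show that $\shadow(\beta_t)$ is convex by contradiction: if it were not, there would be two points $x, y \in \shadow(\beta_t)$ and a flat geodesic segment $[x,y]$ (between them, say the one that realises the flat distance) that leaves the shadow. Since $\mathbb{H}^2 \setminus \beta_\perp^t$ has exactly two components and $\shadow(\beta_t)$ is one of them, the segment $[x,y]$ must cross the bi-infinite geodesic $\beta_\perp^t$. As $x$ and $y$ both lie strictly on the shadow side, the segment crosses $\beta_\perp^t$ at (at least) two points $p$ and $p'$. First I would observe that between $p$ and $p'$ the two geodesic arcs --- the sub-arc of $[x,y]$ and the sub-arc of $\beta_\perp^t$ --- together bound a region, i.e.\ a geodesic bigon.

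The key step is then a Gauss--Bonnet argument, exactly of the type already used twice in the construction of $\beta_\perp^t$: a flat metric on $\mathbb{H}^2$ (lifted from $\Sigma$) has zero curvature away from singularities and strictly negative concentrated curvature (cone angle exceeding $2\pi$, i.e.\ negative angle excess) at each singularity. Applying Gauss--Bonnet to the bigon bounded by the two geodesic sub-arcs from $p$ to $p'$, the sum of the two interior angles plus the contribution of the singularities enclosed must equal $2\pi$ plus a positive curvature defect; but geodesic arcs contribute interior angles that are at most $\pi$ at each of $p, p'$ on the relevant side (indeed, since both are geodesics that genuinely cross, the bigon angles are strictly less than $\pi$ unless the arcs coincide), so the left-hand side is strictly less than $2\pi$ while the right-hand side is at least $2\pi$, a contradiction. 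Hence no such bigon exists, so $[x,y]$ cannot cross $\beta_\perp^t$ twice, and in fact cannot leave the shadow at all.

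I would also need to handle the minor point that a flat geodesic segment between two points need not be unique (because of singularities), but this causes no trouble: it suffices that \emph{some} geodesic representative of any homotopy-rel-endpoints class stays in the shadow, or more simply that the shadow is geodesically convex in the sense that for any $x,y \in \shadow(\beta_t)$ every flat geodesic from $x$ to $y$ stays inside --- and the same bigon argument rules out any such geodesic escaping. A degenerate case to mention briefly is when $[x,y]$ meets $\beta_\perp^t$ along a whole sub-segment rather than transversally, or meets it only at the endpoints $x$ or $y$ (if we allow the shadow to be closed); in the first case the two geodesics share a sub-arc and then, since through any regular point a geodesic direction determines a unique geodesic, they must coincide on a maximal interval, and one checks the endpoints of that interval again produce a bigon or force $[x,y] \subseteq \overline{\shadow(\beta_t)}$.

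The main obstacle is purely bookkeeping: making the Gauss--Bonnet computation clean in the presence of singularities lying \emph{on} the boundary geodesic $\beta_\perp^t$ or on the segment $[x,y]$ itself, where one must use the lower/upper tangent vector formalism from earlier in the section to define the bigon's interior angles correctly and verify they are each $\leqslant \pi$. Since the paper states this is "one of a number of easy consequences" of Gauss--Bonnet and has already set up precisely this machinery, I expect the write-up to be short once the bigon is identified.
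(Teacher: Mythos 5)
Your argument is exactly the paper's: a segment between two points of the shadow that leaves it must meet $\beta_\perp^t$ in at least two points, consecutive intersections bound a geodesic bigon, and Gauss--Bonnet forbids such a bigon in a flat metric with nonpositive concentrated curvature. The extra care you take with non-unique geodesics and tangential/degenerate intersections goes slightly beyond the paper's brief write-up but does not change the approach.
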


\begin{proof}
Let $x, x'$ be distinct points in $\shadow(\beta_t)$ and suppose that the flat geodesic segment $[x,x']$ intersects $\beta_\perp^t$. 
Then the number of intersection points is at least two.

\smallskip
We may parameterise $[x,x']$ and consider consecutive points of intersection. 
Between these points, $[x, x']$ and $\beta_\perp^t$ bound a nontrivial bigon.
The presence of such a bigon contradicts the Gauss-Bonnet theorem.

\end{proof} 

As a consequence of \Cref{l.convex-shadow}, we immediately deduce the following lemma.
\begin{lemma}\label{l.nesting} 
Let $\beta: [0,T] \to \mathbb{H}^2$ be a parameterised flat geodesic and let $0 < s < t \leqslant T$. Then $\shadow(\beta_t)$ is nested strictly inside $\shadow(\beta_s)$, that is, $\shadow(\beta_t) \subset \shadow(\beta_s)$ and $\beta_\perp^t$ is contained in the interior of $\shadow(\beta_s)$. 
\end{lemma}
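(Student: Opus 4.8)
The plan is to deduce \Cref{l.nesting} from the convexity established in \Cref{l.convex-shadow} together with a direct Gauss--Bonnet comparison between the two perpendiculars $\beta_\perp^s$ and $\beta_\perp^t$. First I would fix $0 < s < t \leqslant T$ and recall the alternate ``simple time'' construction of $\beta_\perp^\bullet$ given just above the lemma: if $s$ and $t$ happen to be simple times, then the remark preceding the lemma already records (via a Gauss--Bonnet argument) that $C^-(\beta_\perp^s) \subset C^-(\beta_\perp^t)$; equivalently $\shadow(\beta_t) = \mathbb{H}^2 \setminus \overline{C^-(\beta_\perp^t)} \subset \mathbb{H}^2 \setminus \overline{C^-(\beta_\perp^s)} = \shadow(\beta_s)$. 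So the first step is just to make this inclusion of the open shadows precise and to upgrade from simple times to arbitrary times $0<s<t$ by the same limiting argument used to define $\beta_\perp^t$ in the non-simple case (perpendiculars at a decreasing sequence of simple times converging to $t$ converge to $\beta_\perp^t$, and inclusions pass to the limit since each $\shadow(\beta_{t_n})$ already sits inside, say, $\shadow(\beta_s)$ once $t_n > s$).

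The substantive point is the strictness: I must show not merely $\shadow(\beta_t) \subseteq \shadow(\beta_s)$ but that $\beta_\perp^t$ lies in the \emph{interior} of $\shadow(\beta_s)$, i.e. $\beta_\perp^t \cap \beta_\perp^s = \emptyset$. Suppose for contradiction that the two bi-infinite flat geodesics $\beta_\perp^s$ and $\beta_\perp^t$ meet. They both cross $\beta$ — at $\beta_s$ and $\beta_t$ respectively — and at those crossing points they are genuinely perpendicular to the (lower) tangent of $\beta$, while $\beta$ itself is a flat geodesic joining $\beta_s$ to $\beta_t$. If $\beta_\perp^s$ and $\beta_\perp^t$ shared a further point $y$, then following $\beta_\perp^s$ from $\beta_s$ to $y$, then $\beta_\perp^t$ from $y$ to $\beta_t$, then $\beta$ back from $\beta_t$ to $\beta_s$, produces a closed flat geodesic polygon. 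Two of its three corners (at $\beta_s$ and at $\beta_t$) are right angles, so the angle sum of the triangle is at least $\pi$ on the side of the triangle away from any enclosed singularities — but a flat triangle whose interior carries only the non-positively-curved (in fact negatively curved, concentrated at cone points of angle $>2\pi$) singularities has angle sum strictly less than $\pi$ by Gauss--Bonnet. This contradiction shows $\beta_\perp^s \cap \beta_\perp^t = \emptyset$, and since we already know $\shadow(\beta_t) \subseteq \shadow(\beta_s)$, the disjoint geodesic $\beta_\perp^t$ must lie entirely in the interior of $\shadow(\beta_s)$, giving the strict nesting $\shadow(\beta_t) \subset \shadow(\beta_s)$ claimed.

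I expect the main obstacle to be handling the degenerate configurations cleanly: the perpendiculars could a priori touch $\beta$ tangentially or run along $\beta$ for a while if $\beta_s$ or $\beta_t$ is a cone point, and the ``polygon'' one forms when $\beta_\perp^s$ and $\beta_\perp^t$ meet could be a bigon rather than a triangle (if the meeting point $y$ is such that two of the three sides coincide), or could enclose cone points so that one needs the full Gauss--Bonnet formula $\sum(\pi - \theta_i) = 2\pi + \sum(\text{cone excess})$ rather than the bare inequality. The way I would deal with this is exactly as in the proof of the first lemma: restrict first to simple times $s,t$, where $\beta_\perp^s,\beta_\perp^t$ are bi-infinite, singularity-free, and cross $\beta$ transversally, so the only possible shared points give an honest bigon or triangle and the Gauss--Bonnet contradiction is immediate; then pass to general times by the limiting construction, using that a nested family of convex sets (\Cref{l.convex-shadow}) stays nested under Hausdorff-type limits of their boundaries and that strictness is preserved because for $s < t$ one can always insert a simple time $s < s' < t$ and apply the simple-time case to the pair $(s', t)$ to get $\beta_\perp^t$ in the interior of $\shadow(\beta_{s'}) \subseteq \shadow(\beta_s)$.
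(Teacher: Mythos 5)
Your core argument is exactly the paper's proof: if $\beta_\perp^s$ and $\beta_\perp^t$ met at a point $p$, the triangle with sides $[\beta_s,p]$, $[\beta_t,p]$ and $[\beta_s,\beta_t]$ would have two right angles and hence angle sum exceeding $\pi$, contradicting Gauss--Bonnet in a flat metric whose curvature is concentrated negatively at the singularities. The extra scaffolding you add via simple times and limiting arguments is harmless but unnecessary; the Gauss--Bonnet triangle argument applies directly at arbitrary times $s<t$.
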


\begin{proof}
Suppose that $\beta_\perp^s$ and $\beta_\perp^t$ intersect. 
Breaking symmetry, we may assume that there is an intersection point to the left of $\beta$. 
Let $p$ be the first point of intersection to the left. 

\smallskip
Let $[\beta_s, p]$ (respectively, $[\beta_t, p]$) be the finite segment of $\beta_\perp^s$ (respectively, $\beta_\perp^t$) with endpoints $\beta_s$ (respectively, $\beta_t$) and $p$. 

\smallskip
Since $\beta_\perp^s$ and $\beta_\perp^t$ are both perpendicular to $\beta$, the sum of the angles of the triangle with sides $[\beta_s, p], [\beta_t, p]$ and $[\beta_s, \beta_t]$ exceeds $\pi$. Since the metric is flat with negative curvature at the singularities, the presence of such a triangle violates the Gauss--Bonnet theorem. 

\smallskip
Hence, we may conclude that $\beta_\perp^s$ and $\beta_\perp^t$ do not intersect and the lemma follows.

\end{proof}

In the next two lemmas we justify the sense in which $\shadow(\beta_t)$ is actually a shadow.

\begin{lemma}\label{l.closest}
For any parameterised geodesic $\beta: [0, T] \to \mathbb{H}^2$ and any $0 < t \leqslant T$, the point on $\beta_\perp^t$ that is closest to $\beta_s$ for any $s \in [0,T]$ is $\beta_t$. 
\end{lemma}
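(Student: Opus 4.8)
The plan is to exploit the fact that $\beta_\perp^t$ is a bi-infinite flat geodesic perpendicular to $\beta$ at $\beta_t$, together with the convexity statement of \Cref{l.convex-shadow} and a Gauss--Bonnet argument. First I would reduce to the case $s \ne t$, since for $s = t$ the claim is trivial. Fix $s \in [0,T]$ with $s \ne t$, and let $p$ be a point on $\beta_\perp^t$ realising the distance from $\beta_s$ to $\beta_\perp^t$ (such a closest point exists because $\beta_\perp^t$ is a closed subset of $\mathbb{H}^2$ and the flat metric is proper/complete). The goal is to show $p = \beta_t$.

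The key step is the following: suppose $p \ne \beta_t$, and consider the flat geodesic segment $[\beta_s, p]$ realising the distance, together with the sub-segment of $\beta_\perp^t$ from $\beta_t$ to $p$ and the sub-segment of $\beta$ from $\beta_s$ to $\beta_t$. These three arcs bound a (possibly degenerate) triangle $\Delta$. At the vertex $\beta_t$ the angle between $\beta$ and $\beta_\perp^t$ is exactly $\pi/2$ on the relevant side (this is the defining property of the lower perpendicular, after checking orientations — the subtlety at a singular $\beta_t$ is handled exactly as in the construction, where the perpendicular rays $\beta_\perp^L,\beta_\perp^R$ each make angle $\pi/2$ with $v^-(\beta_t)$). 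At the vertex $p$ the angle between $[\beta_s,p]$ and $\beta_\perp^t$ is $\pi/2$ as well, since $p$ is a closest point and a first-variation argument forces the connecting segment to meet $\beta_\perp^t$ orthogonally (if the angle on either side were less than $\pi/2$ one could move slightly along $\beta_\perp^t$ and strictly decrease the distance; here one must also use that $\beta_\perp^t$ has no singularity forcing a cone angle $<\pi$ on the far side, or argue directly that the interior angle at $p$ inside $\Delta$ is at least $\pi/2$). Hence the angle sum of $\Delta$ is at least $\pi/2 + \pi/2 = \pi$, with a further strictly positive contribution from the angle at $\beta_s$. Since the flat metric on $\mathbb{H}^2$ has all its curvature (which is negative) concentrated at the cone-point singularities, Gauss--Bonnet gives that the angle sum of any geodesic triangle is at most $\pi$, with equality only for a degenerate triangle enclosing no area and no singularity. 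This is a contradiction unless the triangle degenerates, forcing $p = \beta_t$.

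The main obstacle I expect is the careful bookkeeping of angles when $\beta_t$ is a singularity and/or when the segment $[\beta_s,p]$ passes near singularities: one needs the interior angles of $\Delta$ at $\beta_t$ and at $p$ to each be at least $\pi/2$ \emph{measured inside} $\Delta$, not merely "perpendicular" in some ambient sense. At $\beta_t$ this follows from unwinding the definition of $\beta_\perp^t$ as the lower perpendicular and noting $\beta_s$ lies on the $\beta_0$-side, i.e. in $C^-(\beta_\perp^t)$; at $p$ it follows from minimality of the distance. A clean way to package both is: the segment $[\beta_s, p]$ lies entirely in $\shadow(\beta_t)^{\mathrm{c}}$-side, no — rather, one observes $[\beta_s,\beta_t]$ together with a terminal perpendicular sub-arc cannot be shortcut without creating a bigon or a triangle with excess angle, each forbidden by Gauss--Bonnet as in the proofs of \Cref{l.convex-shadow} and \Cref{l.nesting}. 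Once the two angle lower bounds are in hand, the Gauss--Bonnet contradiction is immediate and the lemma follows.
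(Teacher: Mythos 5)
Your argument is essentially identical to the paper's: form the geodesic triangle with vertices $\beta_s$, $\beta_t$, $p$ and sides along $\beta$, along $\beta_\perp^t$, and the minimizing segment $[\beta_s,p]$; observe the angles at $\beta_t$ and at $p$ are (at least) $\pi/2$; and conclude by Gauss--Bonnet. The extra care you flag regarding angles at singularities is reasonable but not a departure from the paper's one-paragraph proof.
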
 

\begin{proof}
Suppose a point $x \in \beta_\perp^t$ distinct from $\beta_t$ is closest to $\beta_s$. 
Consider the triangle with sides $[\beta_s, \beta_t]$, $[\beta_t, x]$ and $[x, \beta_s]$, where
$[\beta_t, x]$ is a sub-segment of $\beta_\perp^t$. 
Since $x$ is closest to $\beta_s$, the angle inside the triangle at $x$ is $\pi/2$, as otherwise
we could shorten the path from $\beta_s$ to $x$. 
Hence, the triangle with our chosen sides has two of its angles $\pi/2$, which violates the Gauss--Bonnet theorem.

\end{proof} 

\begin{lemma}\label{l.fellow-travel}
There exists $r_0> 0$ such that for any $t $ satisfying $0 < r_0 < t$, any parameterised geodesic $\beta: [0, t] \to \mathbb{H}^2$ and any $x \in \shadow(\beta_t)$, the flat geodesic segment $[\beta_0, x]$ intersects the ball $B(\beta_t, r_0)$. 
\end{lemma}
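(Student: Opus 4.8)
The plan is to reduce the statement to the strong‑contraction property of nearest‑point projection onto a geodesic in a Gromov hyperbolic space, using that $(\mathbb{H}^2, d_{\fl})$ is hyperbolic and that $\beta_t$ is the nearest point of the line $\beta_\perp^t$ to $\beta_0$.

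First I would record the one non‑soft input. The metric $d_{\fl}$ is the $\pi_1(\Sigma)$–invariant lift of a flat cone metric on the closed surface $\Sigma$, all of whose cone angles exceed $2\pi$; since $\pi_1(\Sigma)$ acts geometrically, $(\mathbb{H}^2,d_{\fl})$ is quasi‑isometric to $(\mathbb{H}^2,d_{\hyp})$ and hence $\delta$–hyperbolic for some $\delta>0$ depending only on the surface and the chosen quadratic differential. The constant $r_0$ will be a suitable multiple of $\delta$. It is worth flagging that this is exactly where the negative curvature at the cone points is essential: in the flat plane the lemma is false, since $\shadow(\beta_t)$ is then a half‑plane and $x$ may be pushed far off along a line parallel to $\beta_\perp^t$, so that $[\beta_0,x]$ misses every fixed ball about $\beta_t$; thus no argument using only convexity/Gauss–Bonnet can work. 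Also, if $t\leqslant r_0$ the conclusion is trivial, since then $d_{\fl}(\beta_0,\beta_t)\leqslant t\leqslant r_0$ and $[\beta_0,x]$ already starts inside $B(\beta_t,r_0)$; the hypothesis $t>r_0$ simply isolates the case where $\beta_0$ genuinely lies outside the ball.

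Next I would reduce to a segment that ends on $\beta_\perp^t$. By definition $\shadow(\beta_t)$ is the component of $\mathbb{H}^2\setminus\beta_\perp^t$ not containing $\beta_{t-\epsilon}$, and the segment $\beta|_{[0,t)}$ stays in the other component (the same convexity/Gauss–Bonnet observation used in the proof of \Cref{l.nesting}); hence $\beta_0$ is separated from $x\in\shadow(\beta_t)$ by the geodesic line $\beta_\perp^t$, and $[\beta_0,x]$ crosses $\beta_\perp^t$ in at least one point $y$. Since $[\beta_0,y]\subseteq[\beta_0,x]$, it suffices to prove that the flat geodesic $[\beta_0,y]$, from $\beta_0$ to a point $y$ on the line $\beta_\perp^t$, passes within $r_0$ of $\beta_t$. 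For this, \Cref{l.closest} (applied with $s=0$) says that $\beta_t$ is the nearest point of $\beta_\perp^t$ to $\beta_0$, i.e. a nearest‑point projection of $\beta_0$ onto this geodesic line. In a $\delta$–hyperbolic space such a projection is strongly contracting, which gives
\[
d_{\fl}(\beta_0,y)\ \geqslant\ d_{\fl}(\beta_0,\beta_t)+d_{\fl}(\beta_t,y)-C\delta
\]
for a universal $C$, equivalently $(\beta_0\mid y)_{\beta_t}\leqslant C\delta/2$. (This is precisely the point where one needs hyperbolicity rather than merely $\mathrm{CAT}(0)$: in a $\mathrm{CAT}(0)$ space one only gets $(\beta_0\mid y)_{\beta_t}\leqslant\tfrac12 d_{\fl}(\beta_0,\beta_t)$, which is useless here.) Since in a $\delta$–hyperbolic space the distance from a point $w$ to a geodesic $[u,v]$ is at most $(u\mid v)_w+4\delta$, we conclude $d_{\fl}(\beta_t,[\beta_0,y])\leqslant C\delta/2+4\delta$; choosing $r_0$ to exceed this bound finishes the proof.

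The main obstacle is really concentrated in the first two of these steps: pinning down $\delta$–hyperbolicity of $d_{\fl}$ and then citing the projection estimate in the precise coarse form needed, so that $r_0$ is visibly independent of $\beta$, $t$, and $x$. Everything else is bookkeeping, the only small extra check being that $\beta|_{[0,t)}$ does not re‑cross $\beta_\perp^t$ — the same Gauss–Bonnet/convexity argument already used for \Cref{l.nesting} and \Cref{l.closest}.
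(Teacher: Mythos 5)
Your argument is essentially the paper's own (first) proof: the paper likewise deduces the lemma from convexity of shadows, Lemma~\ref{l.closest} identifying $\beta_t$ as the nearest-point projection of $\beta_0$ onto $\beta_\perp^t$, and hyperbolicity of the singular flat metric, and you have simply made explicit the standard projection/Gromov-product estimates that the paper leaves as ``follows from hyperbolicity.'' (The paper also sketches a second, purely flat-geometric argument via equidistribution of saddle-connection slopes, which you do not need.)
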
 

\begin{proof}
By \Cref{l.convex-shadow}, shadows $\shadow(\beta_t)$ are convex and by \Cref{l.closest} the segment $[\beta_0, \beta_t]$ gives the closest point projection. The existence of $r_0>0$ then follows from the 
hyperbolicity of the singular flat metric.

\smallskip
Alternatively, given $x \in \shadow(\beta_t)$, we can give a more detailed description of the flat geodesic $[x, \beta_0]$. 
Note that it suffices to assume $x \in \beta_\perp^t$. 

\smallskip
We will recall some facts from flat geometry to give this description.
For any point $y$ on the fibre $S$, consider the set $\text{Sad}(y)$ of flat geodesic arcs that 
\begin{itemize}
\item join $y$ to a singularity; and 
\item have no singularity in their interior.
\end{itemize}

It is a standard fact in the theory of half-translation surfaces that the slopes of such arcs equidistribute in the set of directions. See \cite{Mas1} and \cite{Mas2}. 
It follows that we can find $r > 0$ such that all gaps in the slopes of all arcs in $\text{Sad}(y)$ with length at most $r$, are less than $\pi/2$.

\smallskip
Now we consider $\shadow(\beta_t)$ and breaking symmetry, consider $\beta_\perp^L$. 
Suppose that no flat geodesic segment $[x, \beta_0]$, where $x \in \beta_\perp^L$, passes through a singularity in its interior. This then means that the sector of angle $\pi/2$ based at $\beta_t$, with sides $[\beta_t, \beta_0]$ and $\beta_\perp^L$, contains no arc in $\text{Sad}(\beta_t)$ with length at most $r$, a contradiction.

\smallskip
Moving along $\beta_\perp^L$ away from $\beta_t$, let $x$ be the first point for which $[x, \beta_0]$ passes through a singularity $p$.
It follows that for later points $x'$ along $\beta_\perp^L$, the geodesic segments $[x', \beta_0]$ must pass through $p$. 

\smallskip
The same holds for points in $\beta_\perp^R$ and concludes our proof.

\end{proof} 

We record the following consequence. 

\begin{lemma}\label{l.containment}
Let $r \geqslant r_0$, where $r_0$ is the constant in \Cref{l.fellow-travel}. Then there exists $\ell_0> 0$ that depends on $r$ such that for any $\ell \geqslant \ell_0$ and any flat geodesic segment $[\beta_0, \beta_{3\ell}]$ of flat length $3\ell$ and any flat geodesic segment $\beta'$ that fellow-travels $\beta$ so that after parameterising $\beta': (-\epsilon, t] \to \mathbb{H}^2$ to arrange $\beta'_0 \in B(\beta_0, r)$ and $\beta'_t \in B(\beta_{3\ell}, r)$ for some $t$ satisfying $3 \ell - 2r < t < 3 \ell + 2r$, we have that 
\[
\mathbb{H}^2 - \shadow(\beta'_0) \subseteq \mathbb{H}^2 - \shadow(\beta_\ell) 
\]
and
\[
\shadow(\beta'_t) \subseteq \shadow(\beta_{2\ell}).
\]
\end{lemma}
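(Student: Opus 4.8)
The plan is to reduce the statement to the nesting and fellow-travelling lemmas already established, with the quantities $r_0$ and $\ell_0$ chosen so that a short ``slack'' at the endpoints is absorbed while leaving a definite amount of the long geodesic $\beta$ sandwiched between the relevant perpendiculars. First I would record the geometric picture: $\beta: [0,3\ell] \to \mathbb{H}^2$ is a long flat geodesic, $\beta'$ fellow-travels it with endpoints within $r$ of $\beta_0$ and $\beta_{3\ell}$, and we want $\shadow(\beta'_0) \supseteq \shadow(\beta_\ell)$ (equivalently the stated complement inclusion) and $\shadow(\beta'_t) \subseteq \shadow(\beta_{2\ell})$. The two claims are symmetric (one is obtained from the other by reversing the orientation of both geodesics and relabelling $\ell \leftrightarrow 2\ell$), so it suffices to prove, say, the second.

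For the second inclusion, the key step is to compare $\beta_\perp^{\prime t}$ with $\beta_\perp^{2\ell}$ and show that $\beta_\perp^{\prime t}$ separates $\beta_\perp^{2\ell}$ from the ``shadow side''. Here is how I would argue. By hyperbolicity of the singular flat metric (Morse lemma for flat geodesics, or directly the Gauss--Bonnet bigon argument used repeatedly above), since $\beta'_0$ and $\beta'_t$ lie within $r$ of $\beta_0$ and $\beta_{3\ell}$ respectively and the length $3\ell$ is large, the segment $\beta'$ fellow-travels $[\beta_0,\beta_{3\ell}]$ within some universal constant $\delta = \delta(r)$ along its whole length; in particular the point $\beta'_s$ at parameter $s = 2\ell$ (which lies in $[0,t]$ once $\ell$ is large) is within distance $\delta$ of $\beta_{2\ell}$. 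Now I would invoke \Cref{l.closest}: $\beta_\perp^{2\ell}$ is exactly the set whose closest point to every $\beta_r$ is $\beta_{2\ell}$, and by convexity (\Cref{l.convex-shadow}) $\shadow(\beta_{2\ell})$ is a convex set at distance $\geqslant 2\ell - $ (bounded) from $\beta_0$. The perpendicular $\beta_\perp^{\prime t}$ passes through $\beta'_t \in B(\beta_{3\ell},r)$, which is well inside $\shadow(\beta_{2\ell})$ once $\ell \geqslant \ell_0$, while the opposite side of $\beta_\perp^{\prime t}$ contains $\beta'_0 \in B(\beta_0,r)$, which is well outside $\shadow(\beta_{2\ell})$. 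To upgrade ``one point on each side'' to full containment of $\shadow(\beta'_t)$ inside $\shadow(\beta_{2\ell})$, I would show $\beta_\perp^{\prime t}$ cannot cross $\beta_\perp^{2\ell}$: if it did, the two bi-infinite perpendiculars $\beta_\perp^{2\ell}$ (perpendicular to $\beta$ at $\beta_{2\ell}$) and $\beta_\perp^{\prime t}$ (perpendicular to $\beta'$ at $\beta'_t$, a point $\delta$-close to $\beta_{3\ell}$) would, together with a sub-segment of $\beta$ joining $\beta_{2\ell}$ to near $\beta_{3\ell}$, bound a region giving a geodesic triangle (or quadrilateral) with angle sum $> \pi$ — using that both perpendiculars meet the relevant geodesic at right angles and that $\beta$ itself is geodesic — contradicting Gauss--Bonnet, exactly as in the proof of \Cref{l.nesting}. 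Here is where I expect to need $\ell_0$ large: the fellow-travelling constant $\delta(r)$ and the finitely many bounded ``corrections'' coming from where $\beta'$ meets $\beta_{3\ell}$ must be dominated by the separation $\ell$ between the perpendiculars $\beta_\perp^{2\ell}$ and $\beta_\perp^{3\ell \pm \delta}$, which by \Cref{l.nesting} is nested and by convexity is at definite positive distance. Once $\beta_\perp^{\prime t}$ and $\beta_\perp^{2\ell}$ are disjoint, the side of $\beta_\perp^{\prime t}$ containing the far point $\beta'_t$ lies entirely on the shadow side of $\beta_\perp^{2\ell}$, i.e. $\shadow(\beta'_t)\subseteq\shadow(\beta_{2\ell})$.

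The main obstacle, as indicated, is not any single step but making the ``once $\ell$ is large enough'' quantifier honest: I must extract from \Cref{l.fellow-travel} (and the flat Morse lemma it implicitly uses) an explicit fellow-travelling constant depending only on $r$, and then check that the nesting gap between $\shadow(\beta_{2\ell})$ and the perpendiculars living near $\beta_{3\ell}$ grows with $\ell$ while the error terms stay bounded. A clean way to package this is: let $r_0$ be as in \Cref{l.fellow-travel}, let $\delta(r)$ be the flat fellow-travelling constant for $r$-close endpoints over a long segment, and set $\ell_0$ large enough that (i) $2\ell - C > \delta(r) + r$ and (ii) the two perpendiculars under comparison are separated by more than $\delta(r)+r$, where $C$ is the universal bound on how far $\shadow$-boundaries of interior points sit from the base geodesic. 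With these choices the Gauss--Bonnet non-crossing argument goes through verbatim, and the two displayed inclusions follow. I would then remark that the first inclusion is obtained by applying the same argument to the reversed geodesics $\bar\beta(s) = \beta(3\ell - s)$ and $\bar\beta'$, under which $\shadow(\beta_\ell)$ becomes a shadow at parameter $2\ell$ from the new basepoint and $\shadow(\beta'_0)$ becomes a shadow at the new far endpoint, completing the proof.
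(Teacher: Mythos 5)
Your overall structure matches the paper's --- fix a fellow-travelling constant $r'$ depending on $r$, and show that for $\ell$ large the two perpendiculars in question cannot cross --- but the step where you actually rule out the crossing has a gap.

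You propose to derive a contradiction from a geodesic triangle or quadrilateral with angle sum exceeding $\pi$ (or $2\pi$), ``exactly as in the proof of \Cref{l.nesting}.'' In \Cref{l.nesting} that works because $\beta_\perp^s$ and $\beta_\perp^t$ are both perpendicular to the \emph{same} geodesic $\beta$, so the triangle they bound together with $[\beta_s,\beta_t]$ has two right angles built in. In the present lemma $\beta_\perp^{2\ell}$ is perpendicular to $\beta$ while $\beta_\perp^{\prime t}$ is perpendicular to the \emph{different} geodesic $\beta'$, and the short ``bridge'' segment from $\beta'_t$ to $\beta_{3\ell}$ (or wherever you close up the polygon) meets both of these at angles that are not controlled by anything: the Gauss--Bonnet bound for a quadrilateral or pentagon in a nonpositively curved metric is then not violated, and no contradiction follows. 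Fellow-travelling bounds distances, not angles, so one cannot conclude that the bridge is ``almost tangent'' to anything. This is the concrete obstacle your sketch does not address.

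The paper avoids the angle comparison entirely. It argues via distances: by \Cref{l.closest}, the perpendicular $\beta_\perp^{\prime 0}$ has closest point $\beta'_0$ to the points of $\beta'$, hence stays at distance roughly $\ell$ from $\beta_\ell$ once one accounts for the fellow-travelling constant $r'$; meanwhile, \Cref{l.fellow-travel} forces any flat geodesic from $\beta'_0$ to a point $p$ on $\beta_\perp^\ell \subset \partial\shadow(\beta_\ell)$ to pass through a fixed ball $B(\beta_\ell, r_0)$. If $\beta_\perp^{\prime 0}$ met $\beta_\perp^{\ell}$ at $p$, the segment $[\beta'_0,p]\subset\beta_\perp^{\prime 0}$ would have to pass through that ball, contradicting the lower distance bound when $\ell_0$ is chosen larger than a constant multiple of $r'$. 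Your proposal cites both \Cref{l.closest} and fellow-travelling, but uses them only to place $\beta'_0$ and $\beta'_t$ on the correct sides, not to run this distance argument; the distance argument is precisely what replaces the missing angle control. Replacing your Gauss--Bonnet step with this distance-based argument fixes the proof.
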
 

\begin{proof} 
We give a proof of the first inclusion; the second inclusion follows using similar arguments. 

\smallskip
Given $r$, there exists $r' \geqslant r$ that depends only on $r$ such that $r'$ is the fellow travelling constant for the segments $[\beta'_0, \beta'_t]$ and $[\beta_0, \beta_{3\ell}]$. 
If $\ell_0 > 2r'$ then $\beta'^0_\perp$ does not intersect the ball $B(\beta_0, r')$. Otherwise, for any point of $\beta'$ that lies in $B(\beta_\ell, r')$ the point $\beta'_0$ is not the closest point on $\beta'^0_\perp$, contradicting \Cref{l.closest}.
Suppose now that $\beta'^0_\perp$ and $\beta_\perp^\ell$ intersect in the point $p$.
Since $\beta'^0_\perp$ does not intersect $B(\beta_0, r')$, the geodesic segment $[\beta'_0, p] \subset \beta'^0_\perp$ does not intersect $B(\beta_\ell, r_0)$ which contradicts \Cref{l.fellow-travel}. 
Hence, the geodesics $\beta'^0_\perp$ and $\beta_\perp^\ell$ do not intersect when $\ell_0 > 2 r'$, from which we deduce $\mathbb{H}^2 - \shadow(\beta'_0) \subseteq \mathbb{H}^2 - \shadow(\beta_\ell)$.

\end{proof} 

\subsection{Ladders}

The universal cover $\mathbb{H}^3$ can be equipped with the $\mathbb{Z}$-equivariant pseudo-Anosov flow $\{\psi_r \, ; \, r \in \mathbb{R} \}$ such that the time 1-map is the lift of the pseudo-Anosov monodromy $f$ of the fibered 3-manifold $M$.
In fact, various lifts to the universal covers of the fibre inclusion $\Sigma \to M$ are precisely given by $\psi_r$ applied to our chosen lift $\mathbb{H}^2 \to \mathbb{H}^3$. 
We will call these lifts the \emph{$r$-fibres} in $\mathbb{H}^3$. 
As a notational choice, we will denote the inclusion of $\mathbb{H}^2$ in $\mathbb{H}^3$ given by the $r$-fibre by $\psi_r(\mathbb{H}^2)$. 

\begin{definition}
Let $\beta$ be a flat geodesic segment in $\mathbb{H}^2$. 
We define the \emph{ladder} given by $\beta$ to be the set
\[
\lad(\beta) = \bigcup_{r \in \mathbb{R}} \psi_r(\beta)
\]
\end{definition} 

Comparing our definition to the ladders introduced by Mitra in \cite{Mit}, we note two differences: 
\begin{enumerate}
\item In \cite{Mit}, the ladders are constructed for the group extension $1 \to \pi_1(\Sigma) \to \pi_1(M) \to \mathbb{Z} \to 1$, and thus the ladder consists of one segment in each $\mathbb{Z}$-coset of $\pi_1(\Sigma)$. We operate directly in $\mathbb{H}^2$ and $\mathbb{H}^3$ and our ladders contain the segment $\psi_r(\beta)$ in each fibre $\psi_r(\mathbb{H}^2)$. 
As $\pi_1(\Sigma)$ and $\pi_1(M)$ act co-compactly on $\mathbb{H}^2$ and $\mathbb{H}^3$, the two points of view are equivariantly quasi-isometric by the Svarc-Milnor lemma. 
\item Secondly, in \cite{Mit}, when the segment $\beta$ in $\pi_1(\Sigma)$ is moved to a different fibre, it is pulled tight in the metric on the coset. 
In our setup, the maps $\psi_r$ are affine maps. 
As a result, the segment $\psi_r(\beta)$ is already geodesic in the corresponding flat metric on $\psi_r( \mathbb{H}^2)$. 
\end{enumerate} 

\smallskip
Thus, \cite[Lemma 4.1]{Mit} applies, and in our notation this translates
\begin{lemma}\label{lem:Mahan}
For any geodesic $\beta$ in the flat metric, the ladder $\lad(\beta)$ is quasi-convex in the singular solv metric
on $\mathbb{H}^3$ with quasi-convexity constants independent of $\beta$. 
\end{lemma}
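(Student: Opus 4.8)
The plan is to deduce the lemma from \cite[Lemma 4.1]{Mit} via the two structural observations recorded just above it, so the real content is a dictionary between our setup and Mitra's.

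First I would put in place the quasi-isometric identifications. Since $\pi_1(M)$ acts properly, cocompactly and isometrically on the proper geodesic space $(\mathbb{H}^3, d_{\sol})$ (the singular solv metric is proper and geodesic and descends to $M$), the Svarc--Milnor lemma gives a $\pi_1(M)$-equivariant quasi-isometry between $(\mathbb{H}^3, d_{\sol})$ and the Cayley graph of $G = \pi_1(M)$; likewise $(\mathbb{H}^2, d_{\fl})$ is $\pi_1(\Sigma)$-equivariantly quasi-isometric to the Cayley graph of $H = \pi_1(\Sigma)$. Under these identifications the integer fibres $\psi_n(\mathbb{H}^2)$ correspond, with uniform constants, to the left cosets of $H$ in $G$, the section $\mathbb{Z}\to G$ being furnished by a chosen lift of the monodromy $f$; the flow map $\psi_1$ corresponds to conjugation by $f$; and the path metric each fibre inherits from $(\mathbb{H}^3, d_{\sol})$ is quasi-isometric with uniform constants to the word metric of $H$, because the restriction of $\psi_n$ to $\mathbb{H}^2$ is a quasi-isometry onto the flat fibre. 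Both $G$ and $H$ are Gromov hyperbolic (a closed hyperbolic $3$--manifold group and a closed surface group), so Mitra's hypotheses are met.

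Next I would match up the ladders. Given a flat geodesic $\beta$ in $\mathbb{H}^2$, it is a uniform quasi-geodesic in the word metric of $H$ and so lies at uniformly bounded Hausdorff distance from an $H$-geodesic $\lambda$. By the first bullet point above, Mitra's ladder $B_\lambda$ contains exactly one tight segment in each $\mathbb{Z}$-coset, whereas $\lad(\beta)$ contains a segment $\psi_r(\beta)$ in each fibre with $r$ ranging over all of $\mathbb{R}$; but $\psi_r$ displaces every point of $\mathbb{H}^3$ by a uniformly bounded $d_{\sol}$-amount as $r$ runs over a unit interval (flow lines of $\{\psi_r\}$ have bounded speed in the solv metric), so $\bigcup_{r\in\mathbb{R}}\psi_r(\beta)$ is at bounded Hausdorff distance from $\bigcup_{n\in\mathbb{Z}}\psi_n(\beta)$ and we may work with the integer union. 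By the second bullet point, each $\psi_n$ is affine for the quadratic differentials, hence $\psi_n(\beta)$ is a flat geodesic in the flat metric on $\psi_n(\mathbb{H}^2)$ and therefore a uniform quasi-geodesic in the coset path metric --- it is already ``pulled tight'' in Mitra's sense up to bounded error. Consequently, under the equivariant quasi-isometry $\mathbb{H}^3 \simeq G$, the set $\lad(\beta)$ sits at uniformly bounded Hausdorff distance from Mitra's ladder $B_\lambda$.

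Finally I would invoke \cite[Lemma 4.1]{Mit}, which produces a retraction of $G$ onto $B_\lambda$ that is coarsely Lipschitz with constants depending only on the extension data (hyperbolicity constants and the monodromy), not on $\lambda$; a subset of a geodesic Gromov hyperbolic space admitting such a retraction is quasi-convex with quasi-convexity constant controlled by the Lipschitz constant. Quasi-convexity is preserved, with controlled deterioration of constants, under equivariant quasi-isometry and under bounded Hausdorff perturbation, so transporting back to $(\mathbb{H}^3, d_{\sol})$ gives that $\lad(\beta)$ is quasi-convex with constants independent of $\beta$. The only step that genuinely requires care is the bookkeeping in the previous paragraph: confirming that the two differences between our ladders and Mitra's --- the continuous parameter $r$, and affine images in place of tightened images --- really amount to a bounded Hausdorff perturbation with constants uniform in $\beta$. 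Once that is checked, the conclusion is immediate from \cite{Mit}.
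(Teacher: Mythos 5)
Your proposal is correct and follows essentially the same route as the paper: the paper's entire justification consists of the two observations you reproduce (the Svarc--Milnor equivariant quasi-isometry matching fibres to $\mathbb{Z}$-cosets, and the fact that the affine maps $\psi_r$ keep the transported segments geodesic, i.e.\ already ``pulled tight''), after which \cite[Lemma 4.1]{Mit} is invoked directly. You simply fill in the bookkeeping (continuous versus integer flow parameter, bounded Hausdorff perturbations, transport of quasi-convexity under quasi-isometry) that the paper leaves implicit.
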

As the singular flat metric and the singular solv metric are quasi-isometric to the hyperbolic metrics on $\mathbb{H}^2$ and $\mathbb{H}^3$, quasi-convexity of ladders is also true for the hyperbolic metrics.
We will continue the geometric discussion for the singular flat and singular solv metrics.

\smallskip
We now consider the ladders defined by shadows, namely the sets
\[
L(\beta, t) = \coloneqq \bigcup_{x \in \shadow(\beta_t)} \bigcup_{r \in \mathbb{R}} \psi_r(x).
\]

By \Cref{l.nesting} and item (2) above the lemma below immediately follows.
\begin{lemma}\label{l.containment} 
Let $\beta: [0,T] \to \mathbb{H}^2$ be a parameterised flat geodesic and let $0 < s < t \leqslant T$. Then 
\[
L(\beta, t) \subsetneq L(\beta, s).
\]
\end{lemma}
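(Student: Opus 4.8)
\textbf{Proof plan for Lemma~\ref{l.containment} ($L(\beta,t)\subsetneq L(\beta,s)$).}
The plan is to deduce the statement fibre by fibre from the corresponding statement about optimal shadows, \Cref{l.nesting}, using the fact that the maps $\psi_r$ are affine (item (2) above). First I would fix $r\in\mathbb{R}$ and observe that, by construction,
\[
L(\beta,t)\cap\psi_r(\mathbb{H}^2)=\psi_r\bigl(\shadow(\beta_t)\bigr),
\qquad
L(\beta,s)\cap\psi_r(\mathbb{H}^2)=\psi_r\bigl(\shadow(\beta_s)\bigr),
\]
so it suffices to show $\psi_r(\shadow(\beta_t))\subseteq\psi_r(\shadow(\beta_s))$ for every $r$, with strict containment in at least one fibre. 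Since $\psi_r$ restricted to the $0$-fibre is an affine homeomorphism onto the $r$-fibre, it carries $\shadow(\beta_t)$ and $\shadow(\beta_s)$ exactly to the optimal shadows at the corresponding points of the flat geodesic $\psi_r(\beta)$ in the flat metric on $\psi_r(\mathbb{H}^2)$; affine maps send flat geodesics to flat geodesics and preserve the perpendicularity and Gauss--Bonnet arguments underlying the definition of $\shadow$. Hence $\psi_r(\shadow(\beta_t))$ is the optimal shadow of $\psi_r(\beta)$ at its time-$t$ point, and likewise for $s$.

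With this identification in place, \Cref{l.nesting} applied to the flat geodesic $\psi_r(\beta)$ in the flat metric on $\psi_r(\mathbb{H}^2)$ gives $\psi_r(\shadow(\beta_t))\subset\psi_r(\shadow(\beta_s))$, and moreover the boundary geodesic $\psi_r(\beta_\perp^t)$ lies in the interior of $\psi_r(\shadow(\beta_s))$. Taking the union over $r\in\mathbb{R}$ yields $L(\beta,t)\subseteq L(\beta,s)$. For strictness, note that in the $0$-fibre \Cref{l.nesting} already gives a point of $\shadow(\beta_s)$ not in $\shadow(\beta_t)$; this point is not in $L(\beta,t)$ because $L(\beta,t)$ meets the $0$-fibre in precisely $\shadow(\beta_t)$ (the fibres $\psi_r(\mathbb{H}^2)$ foliate $\mathbb{H}^3$, so a point of the $0$-fibre lies in $L(\beta,t)$ only via the $r=0$ term). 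This proves $L(\beta,t)\subsetneq L(\beta,s)$.

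The only genuinely delicate point is the claim that $\psi_r$ carries optimal shadows to optimal shadows, i.e.\ that the construction of $\beta_\perp^t$ commutes with the affine monodromy. This should be routine: the defining properties of the lower perpendicular (angle $\pi/2$ with the lower tangent vector, and the iterative turning-by-$\pi$ rule at singularities) are stated in terms of angles and straightness, both of which are respected by affine maps once one notes that $\psi_r$ takes the singular flat structure on the $0$-fibre to the singular flat structure on the $r$-fibre and takes singular points to singular points; and the minimality property characterising $\beta_\perp^t$ among all perpendiculars (the $C^-$-containment in the first lemma of the section) is likewise transported by the homeomorphism $\psi_r$. I would state this as a one-line remark rather than reprove the Gauss--Bonnet estimates in each fibre. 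Everything else is a direct consequence of \Cref{l.nesting} and the foliation of $\mathbb{H}^3$ by the fibres $\psi_r(\mathbb{H}^2)$.
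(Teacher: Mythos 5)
Your proposal is correct and takes essentially the same route as the paper, which deduces the lemma immediately from \Cref{l.nesting} together with the observation that the maps $\psi_r$ are affine. The only comment is that the inclusion $L(\beta,t)\subseteq L(\beta,s)$ already follows set-theoretically from $\shadow(\beta_t)\subseteq\shadow(\beta_s)$, since $L(\beta,\cdot)$ is defined as a union of the $\psi_r$-images over points of the shadow (so the detour through ``$\psi_r$ carries optimal shadows to optimal shadows'' is not needed for this step, though it is true); your use of the disjointness of the fibres to get strictness is exactly the right point to make explicit.
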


\subsection{Undistorted segments} 

To quantify (un)distortion of segments, we use the following definition. 
\begin{definition}
  For a constant $K \geqslant 1$, we say that a flat geodesic segment $\beta: [0, T] \to \mathbb{H}^2$ is $K$-undistorted if $d_{\sol}(\beta_0 , \beta_T) \geqslant T/ K$. 
\end{definition}
The requirement that a segment be undistorted translates to the following geometric criterion.

Let $\lambda_\pm$ be the stable and unstable foliations for the pseudo-Anosov monodromy $f$. 
If a flat geodesic segment makes a definite angle with both foliations $\lambda_\pm$ then it cannot be shortened beyond a factor that depends only on the bound on the angle. 
As a result, it is undistorted. 

\smallskip To be more precise, let $\beta = [\beta_0, \beta_t]$ be a flat geodesic segment. 
We may write $\beta$ as a concatenation $\beta = [\beta_0, \beta_{s_1}] \cup [\beta_{s_1}, \beta_{s_2}] \cup \dotsc [\beta_{s_{k-1}}, \beta_{s_k}]$, where $s_k = t$ and where $\beta_{s_j}$ is a singularity for all $1 \leqslant j \leqslant k-1$. 
We then let
\[
\slope(\beta) = \{ m_j \, : \, m_j \text{ is the slope of } [\beta_{s_{j-1}}, \beta_{s_j} ] \text{ for } 1 \leqslant j \leqslant k \} 
\]

One could now precisely quantify the maximal $K$ so that $\beta$ is $K$--undistorted in terms of $\slope(\beta)$, but since we do not make a particular use of it, we refrain from doing so.


\smallskip
For our purposes, the following soft definition suffices. 

\begin{definition}\label{d.good}
For $\ell > 0$, we say that a flat geodesic segment $\beta$ of flat length is \emph{$\ell$-good} if 
\begin{itemize}
\item the length of $\beta$ is at least $3\ell$; and
\item all slopes $m$ in $\slope(\beta)$ satisfy $1/2 < | m | < 2$. 
\end{itemize} 
\end{definition}
The choice of the slope bound is arbitrary and it will only affect the constant of undistorted and other coarse constants to follow.

\smallskip
We first list a consequence of the slope bounds.
\begin{lemma}\label{l.quasi-geo}
Let $\beta: [0, t] \to \mathbb{R}$ be a flat geodesic segment with slope $m$ satisfying $1/2 < | m |< 2$.  
Then $\beta$ is a quasi-geodesic in $\lad(\beta)$ and there is a constant $c \coloneqq c(m)$ such that $d_{\sol}(\lad(\beta_0), \lad(\beta_t)) \geqslant c \cdot t$.
\end{lemma}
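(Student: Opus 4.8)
The statement to prove is Lemma~\ref{l.quasi-geo}: if $\beta \colon [0,t] \to \mathbb{H}^2$ is a single flat geodesic segment (no interior singularity) whose slope $m$ with respect to the horizontal/vertical foliations of $q$ satisfies $1/2 < |m| < 2$, then $\beta$ is a quasi-geodesic inside $\lad(\beta)$, and moreover $d_{\sol}\bigl(\lad(\beta_0), \lad(\beta_t)\bigr) \geqslant c(m)\cdot t$.

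The plan is to work in an explicit model where the singular solv metric is easy to compute, and reduce everything to a calculation in the universal cover of $\text{Sol}$ away from the singular set. Recall that the monodromy is affine, given by a diagonal matrix $\mathrm{diag}(e^\kappa, e^{-\kappa})$ in suitable local coordinates $(x,y)$ adapted to the unstable/stable foliations, where $\kappa = \log\lambda$ and $\lambda$ is the dilatation of $f$. In these coordinates the flow $\psi_r$ acts on a point with flat coordinates $(x,y)$ in the $0$-fibre by sending it to the point with coordinates $(e^{\kappa r}x,\, e^{-\kappa r}y)$ in the $r$-fibre, and the solv metric is locally $ds^2 = e^{-2\kappa r}dx^2 + e^{2\kappa r}dy^2 + dr^2$ (up to rescaling $r$), away from the discrete set of singular leaves. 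First I would write $\beta$ in these coordinates: since the slope is bounded between $1/2$ and $2$, after parameterising $\beta$ by flat arclength we have $\beta_s = (\beta_0^x + a s,\, \beta_0^y + b s)$ with $a^2+b^2=1$ and $a,b$ both bounded away from $0$ (say $|a|,|b| \in [c_1, c_2]$ for constants depending only on the slope bound). Then $\lad(\beta) = \{(e^{\kappa r}(\beta_0^x+as),\, e^{-\kappa r}(\beta_0^y+bs),\, r) : r\in\mathbb{R},\ s\in[0,t]\}$, a $2$-dimensional subset of the ambient $3$-space.

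The key step is the lower bound $d_{\sol}(\lad(\beta_0), \lad(\beta_t)) \geqslant c\cdot t$. For this I would argue as follows. First, $\lad(\beta)$ is quasi-convex with uniform constants by Lemma~\ref{lem:Mahan}, so any solv-geodesic between a point of $\lad(\beta_0)$ and a point of $\lad(\beta_t)$ stays within bounded Hausdorff distance of $\lad(\beta)$; up to an additive error it suffices to bound the length of a path \emph{inside} $\lad(\beta)$ joining the two fibres-of-the-ladder $\psi_{\mathbb{R}}(\beta_0)$ and $\psi_{\mathbb{R}}(\beta_t)$. Parameterise such an interior path by its $(r(\tau), s(\tau))$ coordinates, with $s(0)=0$, $s(1)=t$. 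Its solv length is $\int \sqrt{e^{-2\kappa r}(a\dot s)^2 + e^{2\kappa r}(b\dot s + \text{(terms from }\dot r)\cdots)^2 + \dot r^2}\,d\tau$; the cleanest route is to observe that at every instant the speed in the ambient metric is at least the maximum of (speed in the $x$-direction) and (speed in the $y$-direction) measured in the \emph{fixed} flat metric of whichever fibre we are momentarily in. Differentiating $X(\tau) = e^{\kappa r(\tau)}(\beta_0^x + a s(\tau))$ and $Y(\tau) = e^{-\kappa r(\tau)}(\beta_0^y + b s(\tau))$ and using $|a s'|, |b s'|$ comparable to $|s'|$, one gets that at least one of $|X'|e^{-\kappa r}$ or $|Y'|e^{\kappa r}$ is $\gtrsim |s'|$ \emph{unless} $|\dot r|$ is itself comparable to $|s'|$ — in either case the integrand is $\gtrsim |s'|$, giving total length $\gtrsim \int_0^1 |s'|\,d\tau \geqslant t$. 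This is the heart of the matter and where the slope bound is essential: if $\beta$ were nearly parallel to the unstable direction ($a\approx 0$), one could slide along it for a large flat distance while changing $X$, $Y$ only slightly, so $t$ could be huge with small ambient displacement.

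For the quasi-geodesic claim, $\beta$ itself is the path with $r\equiv 0$; its solv length is exactly its flat length $t$ (the $r=0$ fibre carries the flat metric), so $\beta$ realises the ladder-distance up to the multiplicative constant $c(m)$ and the additive quasi-convexity error — i.e. $d_{\sol}(\beta_0,\beta_s) \geqslant c\cdot|s - s'| \geqslant d_{\sol}(\beta_{s'},\beta_s)$... wait, more precisely: for any $0\le s'\le s\le t$, the restriction $\beta|_{[s',s]}$ is again a flat geodesic of the same slope, so by the length bound just proved $d_{\sol}(\beta_{s'},\beta_s) \geqslant c\cdot(s-s') \geqslant c\cdot d_{\fl}(\beta_{s'},\beta_s)/1$, while trivially $d_{\sol}(\beta_{s'},\beta_s)\le d_{\fl}(\beta_{s'},\beta_s) = s-s'$; combined with the fact that flat arclength along $\beta$ equals $|s-s'|$, this says $\beta$ is a $(1/c, 0)$–quasi-geodesic in $d_{\sol}$, hence in $\lad(\beta)$ with its induced (solv-quasi-isometric) metric. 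The main obstacle is setting up the coordinate computation cleanly enough that the ``at least one direction moves fast'' dichotomy is rigorous without a page of case analysis on $\dot r$; I would handle this by passing to a bi-Lipschitz comparison with the standard $\mathrm{Sol}$ metric and invoking the known fact that horizontal motion in $\mathrm{Sol}$ over a fixed height is cheap but over varying heights with bounded total height-variation it is controlled — or, more self-containedly, by the direct estimate $\text{length} \geqslant \tfrac12\max_\tau\bigl(|X'|e^{-\kappa r} + |Y'|e^{\kappa r}\bigr)$-type bound integrated against $|s'|$.
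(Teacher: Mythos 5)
Your proposal is correct and is essentially the paper's own argument: the slope bound forces the image $\psi_r(\beta)$ to have flat length at least $c(m)\cdot t$ in \emph{every} fibre, hence any path in $\lad(\beta)$ joining $\lad(\beta_0)$ to $\lad(\beta_t)$ has length at least $c\int |s'|\,d\tau \geqslant c\,t$, and quasi-convexity of the ladder (Lemma~\ref{lem:Mahan}) transfers this to the ambient $d_{\sol}$-distance. The only remark worth making is that the case analysis on $\dot r$ you worry about disappears entirely if you use coordinates $(x,y,r)$ in which $\psi_r$ is the identity on the fibre coordinates and the metric is $e^{2\kappa r}dx^2+e^{-2\kappa r}dy^2+dr^2$: then the speed of a path in the ladder is $\sqrt{(e^{2\kappa r}a^2+e^{-2\kappa r}b^2)\,s'^2+r'^2}\geqslant |s'|\min(|a|,|b|)$ with no cross terms.
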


\begin{proof}
The ladder $\lad(\beta)$ intersects the $r$-fibre in $\psi_r(\beta)$.
Since $\psi_r$ is pseudo-Anosov flow, the slope bounds imply that there is a constant $c> 0$ that depends only on the bounds, such that for any $r \in \mathbb{R}$ the flat length of $\psi_r(\beta)$ in the fibre $\psi_r(\mathbb{H}^2)$ is at least $c \cdot l_{\fl}(\beta)$. 
We deduce that any path in $\lad(\beta)$ that connects $\lad(\beta_0)$ with $\lad(\beta_t)$ must also have length at least $c \cdot l_{\fl} (\beta)$. The lemma follows. 

\end{proof} 

From \Cref{l.quasi-geo}, we deduce the lemma below, the key reason why we need the notion of $\ell$--good segments.
\begin{lemma}\label{lem:good-segments-good}
	There is a constant $B > 0$ such that every $\ell$--good segment is a $B$--quasi-geodesic in the singular solv metric on $\mathbb{H}^3$.
\end{lemma}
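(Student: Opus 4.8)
The plan is to leverage Lemma~\ref{l.quasi-geo} piecewise and then upgrade the piecewise bounds to a global quasi-geodesic statement using the quasi-convexity of ladders from Lemma~\ref{lem:Mahan}. First I would decompose an $\ell$-good segment $\beta = [\beta_0, \beta_t]$ at its singular points as $\beta = [\beta_{s_0},\beta_{s_1}] \cup \dotsb \cup [\beta_{s_{k-1}},\beta_{s_k}]$, with $s_0 = 0$, $s_k = t$, so that each sub-segment $[\beta_{s_{j-1}},\beta_{s_j}]$ has constant slope $m_j$ with $1/2 < |m_j| < 2$. Applying Lemma~\ref{l.quasi-geo} to each piece, with a uniform constant $c_0 = \min\{c(m) : 1/2 < |m| < 2\} > 0$ (the function $c(m)$ from that lemma is bounded below away from $0$ on this compact range of slopes), gives $d_{\sol}(\lad(\beta_{s_{j-1}}), \lad(\beta_{s_j})) \geqslant c_0 (s_j - s_{j-1})$ for each $j$.

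Next I would pass from these ladder-distance bounds to a lower bound on $d_{\sol}(\beta_0, \beta_t)$ along the whole segment. The point is that any path in $\mathbb{H}^3$ from $\beta_0$ to $\beta_t$ can be cut into sub-paths according to which of the sets $\lad(\beta_{s_j})$ it last touches, and since consecutive ladders $\lad(\beta_{s_{j-1}})$ and $\lad(\beta_{s_j})$ are at $\sol$-distance at least $c_0(s_j - s_{j-1})$ from one another, the total path length is at least $c_0 \sum_j (s_j - s_{j-1}) = c_0 t$. More carefully: since $\psi_r$ maps the straight concatenation $\beta$ to the straight concatenation $\psi_r(\beta)$ (each $\psi_r$ is affine and hence maps flat geodesics to flat geodesics), the ladder $\lad(\beta)$ really is a ``product-like'' object swept out by the single flat geodesic $\beta$, and the ordering of the $\lad(\beta_{s_j})$ along it is monotone; this is precisely the content needed to add up the piecewise estimates. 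This yields $d_{\sol}(\beta_0,\beta_t) \geqslant c_0 t$, i.e. $\beta$ is $c_0^{-1}$-undistorted, and in particular $l_{\sol}(\beta) \geqslant c_0 \cdot l_{\fl}(\beta)$.

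To conclude that $\beta$ is a $B$-quasi-geodesic (not merely undistorted between its endpoints), I would combine the undistortion estimate, applied to every sub-segment of $\beta$ (sub-segments of an $\ell$-good segment are themselves concatenations of slope-controlled pieces, so the same bound applies), with Lemma~\ref{lem:Mahan}: $\beta \subset \lad(\beta)$, which is $\kappa$-quasi-convex in the $\sol$-metric with $\kappa$ independent of $\beta$. Quasi-convexity of $\lad(\beta)$ together with the fact that $\beta$ is parametrised proportionally to $\sol$-arclength (up to the factor $c_0$) and has the ``monotone ladder'' structure above forces the $\sol$-geodesic between any two points $\beta_a, \beta_b$ to stay within bounded $\sol$-distance of $\beta$, and conversely $d_{\sol}(\beta_a,\beta_b) \leqslant l_{\sol}(\beta|_{[a,b]}) \leqslant l_{\fl}(\beta|_{[a,b]}) = |b-a|$ gives the upper Lipschitz bound; setting $B = \max\{c_0^{-1}, \kappa, 1\}$ (enlarged by an additive constant to absorb the $\psi_r$-comparison) finishes it.

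The main obstacle I expect is the bookkeeping in the second paragraph: making rigorous the claim that the ladders $\lad(\beta_{s_j})$ are ``monotonically ordered'' along $\lad(\beta)$ so that the piecewise lower bounds genuinely add. Lemma~\ref{l.containment} gives the nesting $L(\beta, t) \subsetneq L(\beta,s)$ of the fattened ladders, which is the right tool, but one must check that any $\sol$-path from $\beta_0$ to $\beta_t$ must cross each intermediate $\lad(\beta_{s_j})$ in order — this is where the product structure coming from $\psi_r$ being a flow (so that distinct $r$-fibres are disjoint and the $\lad(\beta_{s_j})$ separate $\lad(\beta_0)$ from $\lad(\beta_t)$ inside $\lad(\beta)$) does the work, and it needs to be stated carefully rather than waved at.
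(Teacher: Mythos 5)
Your proposal is correct and follows essentially the same route as the paper: decompose the $\ell$-good segment at its singularities, apply Lemma~\ref{l.quasi-geo} to each constant-slope piece, use the linear separation of the intermediate ladders $\lad(\beta_{s_j})$ to see that the piecewise bounds add, and invoke Lemma~\ref{lem:Mahan} to pass from the ladder $\lad(\beta)$ to the ambient solv metric. The one structural difference is ordering: the paper invokes Lemma~\ref{lem:Mahan} \emph{first}, reducing at the outset to showing $\beta$ is a quasi-geodesic in $\lad(\beta)$, which immediately disposes of the ``bookkeeping'' issue you flag in your second paragraph --- a path from $\beta_0$ to $\beta_t$ in $\mathbb{H}^3$ need not touch the intermediate ladders at all, but a path confined (up to bounded error) to $\lad(\beta)$ does have to cross each $\lad(\beta_{s_j})$ in order, since these separate $\lad(\beta_0)$ from $\lad(\beta_t)$ \emph{inside} $\lad(\beta)$. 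You arrive at the same observation at the end of your third paragraph, so the proof is sound; reordering to lead with the Lemma~\ref{lem:Mahan} reduction would simply make the separation argument clean from the start.
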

\begin{proof}
	by Lemma~\ref{lem:Mahan}, ladders are quasi-convex in the singular solv metric on $\mathbb{H}^3$, hence undistorted. Thus, a singular solv geodesic from $\lad(\beta_0)$ to $\lad(\beta_t)$ is contained in a bounded neighbourhood of $\lad(\beta)$. So it suffices to show that if $\beta$ is $L$--good,
	then it is a quasi-geodesic in $\lad(\beta)$.
	To this end, we may write $\beta$ as a concatenation
	\[ 
	\beta = [\beta_0, \beta_{s_1}] \cup [\beta_{s_1}, \beta_{s_2}] \cup \dotsc [\beta_{s_{k-1}}, \beta_{s_k}].
	\]
	Then
	\[ 
	\lad(\beta) = \lad([\beta_0, \beta_{s_1}]) \cup \lad([\beta_{s_1}, \beta_{s_2}]) \cup \dotsc \lad([\beta_{s_{k-1}}, \beta_{s_k}]). 
	\]
         and by \Cref{l.quasi-geo} each segment $[\beta_{s_{j-1}}, \beta_{s_j}]$ is quasi-geodesic in the corresponding sub-ladder $\lad([\beta_{s_{j-1}}, \beta_{s_j}])$. 
         Also, by \Cref{l.quasi-geo}, each ladder $\lad(\beta_{s_j})$ is well-separated from its predecessor $\lad(\beta_{s_{j-1}})$ and its successor $\lad(\beta_{s_{j+1}})$ by distances that are linear in the lengths of the segments $[\beta_{s_{j-1}}, \beta_{s_j}]$ and $[\beta_{s_j}, \beta_{s_{j+1}}]$, and hence the concatenation is a quasi-geodesic. 
        
         \end{proof}

         \begin{lemma}\label{lem:existence-good-segments}
           For any $\ell > 0$ there is an $\ell$--good segment.
         \end{lemma}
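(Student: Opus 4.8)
The plan is to construct an $\ell$-good segment by exhibiting a single long flat geodesic whose every component saddle connection (and terminal sub-segment) has slope bounded away from $0$ and $\infty$. The natural candidates are saddle connections, or more precisely maximal straight arcs, that are nearly diagonal with respect to the horizontal and vertical foliations. First I would recall the equidistribution of slopes of saddle connections on the singular flat surface $\Sigma$ (the Masur-type statement already invoked in the proof of \Cref{l.fellow-travel}, via \cite{Mas1}, \cite{Mas2}): for any $\theta$, there is a saddle connection on $\Sigma$ whose direction lies within $\theta$ of the diagonal direction. Lifting this saddle connection to the universal cover $\mathbb{H}^2$ gives a flat geodesic segment $\sigma$ with a single slope $m$ satisfying $1/2 < |m| < 2$, provided $\theta$ was chosen small enough relative to the arbitrary slope bounds in \Cref{d.good}; this handles the slope condition but not the length condition, since a single saddle connection has some fixed finite length.

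To get arbitrarily long segments, I would concatenate translates of such a saddle connection, or more cleanly, pass to a periodic (closed) geodesic on $\Sigma$ in a direction close to the diagonal. Since closed geodesics in the flat metric also have directions equidistributing as the length grows, there is for every $L$ a closed flat geodesic $c$ on $\Sigma$ of length at least $L$ whose direction makes an angle with the horizontal within the prescribed bound, so that every component saddle connection of $c$ has slope in $(1/2,2)$ (one must be slightly careful that \emph{all} components, not just the overall direction, satisfy this: but a closed geodesic in a fixed direction $\theta$ has every component in direction $\theta$, so this is automatic). Lifting $c$ to $\mathbb{H}^2$ and taking a sub-segment of flat length exactly $3\ell$ — or simply of length at least $3\ell$ — yields a flat geodesic segment whose slope set lies in $(1/2,2)$ and whose length is $\geqslant 3\ell$, i.e.\ an $\ell$-good segment in the sense of \Cref{d.good}.

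An alternative that avoids invoking closed geodesics: take any flat geodesic ray $\rho$ on $\Sigma$ issuing in a fixed diagonal direction $\theta_0$ from a non-singular point; as long as $\rho$ does not hit a singularity it stays in direction $\theta_0$. If it does hit a singularity, continue by the rule used in the construction of $\beta_\perp^L$ in the first lemma, turning by the minimal amount; but then slopes are no longer controlled. So the clean route really is: fix a single direction with controlled slope and use that there exist arbitrarily long straight geodesic segments in that direction. This follows because a generic direction is uniquely ergodic, hence its straight-line flow is minimal and every orbit segment can be extended indefinitely without meeting a singularity; and by equidistribution of saddle-connection directions we may take this fixed direction as close to the diagonal as we like. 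Lifting a length-$3\ell$ initial segment of such a trajectory to $\mathbb{H}^2$ gives the desired $\ell$-good segment.

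The main obstacle is purely one of bookkeeping: ensuring that \emph{every} component saddle connection of the chosen segment — not merely its endpoints or average direction — has slope strictly between $1/2$ and $2$. Working in a single fixed direction $\theta$ resolves this for free, since then every component is in direction $\theta$, so the entire issue reduces to choosing $\theta$ close enough to $\pi/4$ that $1/2 < |\tan\theta| < 2$, which is an explicit and generous constraint. The existence of arbitrarily long straight segments in such a $\theta$ is then guaranteed either by choosing $\theta$ uniquely ergodic (minimality of the straight-line flow) or by taking $\theta$ to be the direction of a closed geodesic of large length; either way, no delicate estimate is needed, and the lemma follows.
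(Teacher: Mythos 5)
Your proposal is correct, but it takes a detour that the paper avoids. The paper's own proof gets a single arbitrarily long saddle connection with slope in $(1/2,2)$ directly: the quadratic growth asymptotic for saddle connections counted by length holds \emph{in every sector of directions} (\cite{Mas1}, \cite{Mas2}), so there are infinitely many saddle connections with slope in $(1/2,2)$, and by discreteness of the set of saddle connection periods their lengths are unbounded; one of length at least $3\ell$ is then $\ell$-good on the nose. You dismiss this route on the grounds that ``a single saddle connection has some fixed finite length,'' which overlooks exactly this point. Your replacement -- fix a near-diagonal direction $\theta$ and produce arbitrarily long singularity-free straight segments in direction $\theta$ -- is nevertheless valid and is essentially the ``more elementary means'' the paper alludes to without spelling out. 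Two small remarks on your version: the unique-ergodicity/minimality input is heavier than needed, since for \emph{any} direction $\theta$ and any $L$ the set of points whose $\theta$-trajectory meets a singularity within time $L$ is a finite union of segments (hence not all of the surface), so a straight segment of length $3\ell$ in direction $\theta$ with no interior singularity always exists -- you may simply take $\theta=\pi/4$; and in your closed-geodesic variant you should take a core curve of a cylinder (a regular closed trajectory), whose lift to $\mathbb{H}^2$ is a bi-infinite straight line from which a length-$3\ell$ subsegment can be cut, rather than requiring the closed geodesic itself to be long.
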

         \begin{proof}           
           The discreteness of saddle connection periods and the quadratic growth asymptotic in every sector of slopes for the number of saddle connections counted by length implies the existence of a good segment. 
           See \cite{Mas1}, \cite{Mas2} for these facts.
           In fact, this shows that a single saddle connection can be chosen as a good segment instead of a concatenation. We also remark that the existence could be shown with more elementary means, but we refrain from doing so for brevity. 
         \end{proof}
         
         We now derive nesting along ladders of shadows along $\ell$-good segments (compare Figure~\ref{fig:flat-shadows}).
         
 \begin{figure}[h!]
  \centering
  \includegraphics[width=\textwidth]{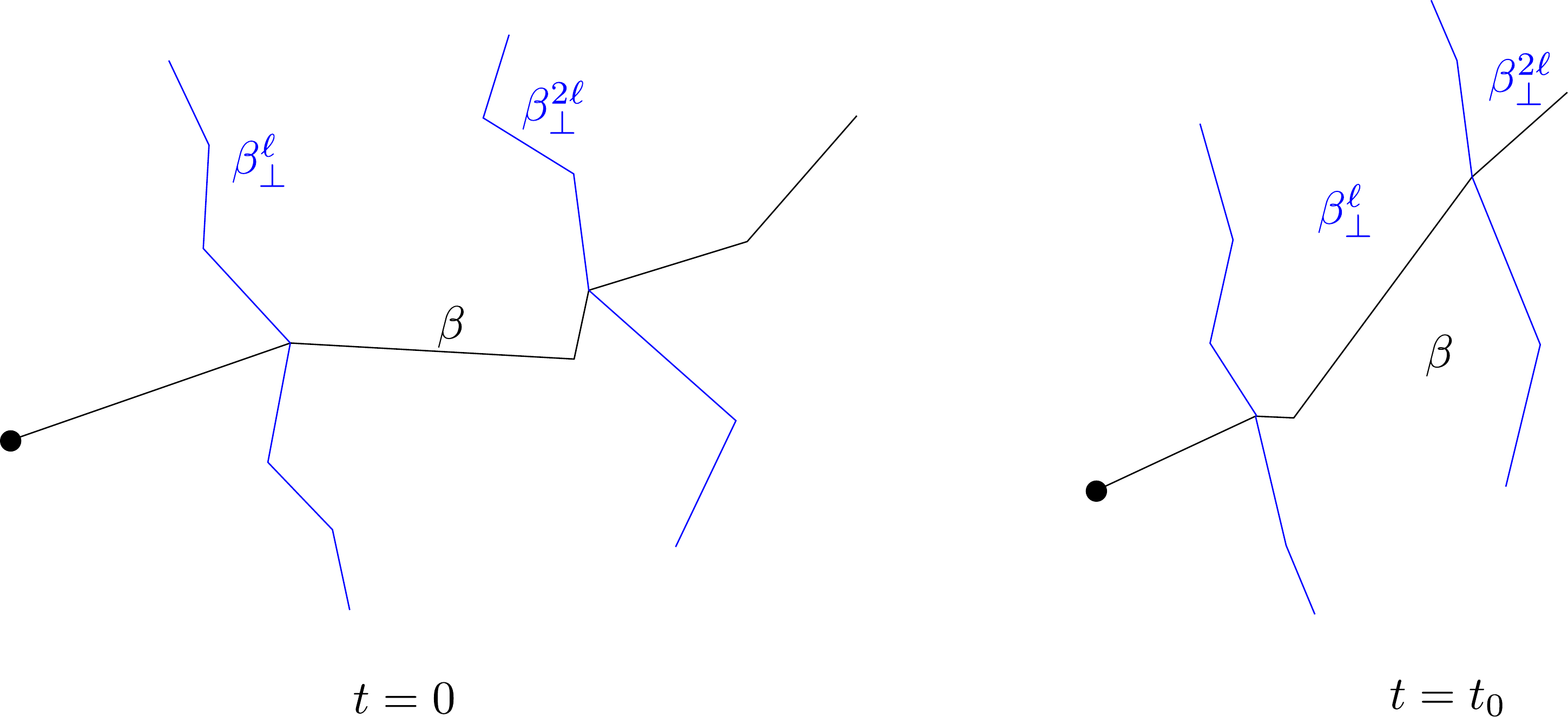}
  \caption{A good segment forces separation of shadows over all fibres (Lemma~\ref{l.nesting-ladders}). The left part of the figure shows the configuration in the base fibre. The middle segment of $\beta$ is assumed to be $\ell$--good. Moving to a different fibre, shown on the right, changes the geometry, but goodness of the middle segment ensures that the distance between (the images of) $\beta^l_\perp, \beta^{2l}_\perp$ is still large, leading to definite separation of the corresponding shadows.}
  \label{fig:flat-shadows}
\end{figure}

\begin{lemma}\label{l.nesting-ladders}
There exists a constant $D \geqslant 1$ such that for $\ell$ sufficiently large, any $\ell$-good segment $\beta$ 
\[
\frac{1}{D} \ell \leqslant d_{\sol}(\lad(\beta_\perp^\ell), \lad(\beta_\perp^{2\ell}) ) \leqslant D \ell.
\]
\end{lemma}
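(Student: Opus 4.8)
The plan is to prove the two inequalities separately, using the key structural facts established so far: the ladder $\lad(\beta_\perp^\ell)$ (resp.\ $\lad(\beta_\perp^{2\ell})$) is the union over all fibres of the (images of the) bi-infinite perpendicular, and by Lemma~\ref{l.nesting} these perpendiculars bound nested shadows. Since $\beta$ is $\ell$-good, the middle third $[\beta_\ell, \beta_{2\ell}]$ has flat length $\ell$ and all its slopes lie in $(1/2, 2)$; the goal is to transport the separation of $\beta_\perp^\ell$ and $\beta_\perp^{2\ell}$ (which is at least $\ell$ in the base fibre, since $\beta$ is a flat geodesic between them and Lemma~\ref{l.closest} makes $\beta_\ell$ the closest-point projection) to every fibre and conclude a uniform linear lower bound, while the affine distortion controlled by the slope bounds gives the matching upper bound.

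\textbf{Upper bound.} For the upper bound, pick the point $\beta_\ell \in \beta_\perp^\ell$ and $\beta_{2\ell} \in \beta_\perp^{2\ell}$. They are joined in the base fibre $\psi_0(\mathbb H^2)$ by the flat geodesic segment $[\beta_\ell, \beta_{2\ell}]$, which is $\ell$-good-like (its slopes lie in $(1/2,2)$) and has flat length $\ell$. This segment lies in $\lad(\beta)$, hence in a bounded neighbourhood of a singular solv geodesic connecting the two ladders; but more directly, $d_{\sol}(\lad(\beta_\perp^\ell), \lad(\beta_\perp^{2\ell})) \leqslant d_{\sol}(\beta_\ell, \beta_{2\ell}) \leqslant d_{\fl}(\beta_\ell, \beta_{2\ell}) = \ell$ since the inclusion of a single fibre into $\mathbb H^3$ is $1$-Lipschitz from the flat to the solv metric. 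So the upper bound holds with $D \geqslant 1$ (even with constant $1$).

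\textbf{Lower bound.} For the lower bound, I would argue as in Lemma~\ref{lem:good-segments-good}: by Lemma~\ref{lem:Mahan} ladders are quasi-convex in the singular solv metric, so a singular solv geodesic from $\lad(\beta_\perp^\ell)$ to $\lad(\beta_\perp^{2\ell})$ stays within a bounded neighbourhood (constant independent of $\beta$) of, say, $\lad([\beta_\ell, \beta_{2\ell}])$. It then suffices to estimate the distance between the two ladders inside this middle sub-ladder. By Lemma~\ref{l.quasi-geo} applied to each of the finitely many maximal slope-subsegments of $[\beta_\ell,\beta_{2\ell}]$, and by the concatenation-is-quasigeodesic argument from Lemma~\ref{lem:good-segments-good}, the segment $[\beta_\ell, \beta_{2\ell}]$ is a $B$-quasi-geodesic in $\lad([\beta_\ell,\beta_{2\ell}])$; combined with Lemma~\ref{l.quasi-geo}'s linear lower bound $d_{\sol}(\lad(\beta_\ell), \lad(\beta_{2\ell})) \geqslant c\cdot\ell$ (with $c$ depending only on the slope bounds $1/2, 2$), and the fact that $\beta_\ell \in \beta_\perp^\ell$ so $\lad(\beta_\ell) \subseteq \lad(\beta_\perp^\ell)$ and likewise $\lad(\beta_{2\ell}) \subseteq \lad(\beta_\perp^{2\ell})$, we get $d_{\sol}(\lad(\beta_\perp^\ell), \lad(\beta_\perp^{2\ell})) \geqslant c\cdot\ell - (\text{quasiconvexity const})$, which is at least $\ell/D$ for $\ell$ large and $D$ chosen appropriately.

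\textbf{Main obstacle.} The step requiring the most care is the lower bound and specifically the inclusion $\lad(\beta_\ell) \subseteq \lad(\beta_\perp^\ell)$: one must know that $\beta_\ell$ genuinely lies on the bi-infinite perpendicular $\beta_\perp^\ell$ (true by construction, since $\beta_\perp^\ell$ passes through $\beta_\ell$) and that transporting by the pseudo-Anosov flow $\psi_r$ respects this containment fibrewise (immediate, since $\psi_r$ is applied pointwise to the whole shadow). A more delicate point is that the separation we want is between the \emph{perpendicular} ladders, not merely between the point-ladders through $\beta_\ell$ and $\beta_{2\ell}$; but since $\beta_\ell \in \beta_\perp^\ell$ and $\beta_{2\ell}\in\beta_\perp^{2\ell}$ we always have $d_{\sol}(\lad(\beta_\perp^\ell), \lad(\beta_\perp^{2\ell})) \leqslant d_{\sol}(\lad(\beta_\ell),\lad(\beta_{2\ell}))$ for the \emph{upper} bound, while for the \emph{lower} bound we need that any path connecting the two perpendicular ladders in $\mathbb H^3$ must cross all the intermediate fibres' perpendiculars and hence accumulate length at least $c\cdot\ell$ in the middle region — this is where Lemma~\ref{l.quasi-geo}'s fibrewise length estimate (every $\psi_r$-image of $[\beta_\ell,\beta_{2\ell}]$ has flat length at least $c\ell$, and the perpendiculars through its endpoints separate the two perpendicular ladders within each fibre) does the real work.
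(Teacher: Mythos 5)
Your upper bound is fine (and in fact more direct than the paper's, which just cites \Cref{lem:good-segments-good}). The lower bound, however, has a genuine gap. The distance $d_{\sol}(\lad(\beta_\perp^\ell), \lad(\beta_\perp^{2\ell}))$ is an infimum over \emph{all} pairs $x \in \beta_\perp^\ell$, $x' \in \beta_\perp^{2\ell}$ (and all fibres), and nothing in your argument controls where on the two perpendiculars the near-realising pair sits. Your reduction to the middle sub-ladder via quasi-convexity does not work: quasi-convexity of $\lad([\beta_\ell,\beta_{2\ell}])$ constrains geodesics with endpoints \emph{on} that set, whereas a geodesic realising the distance between the two perpendicular ladders has its endpoints on $\lad(\beta_\perp^\ell)$ and $\lad(\beta_\perp^{2\ell})$ and need not come near $\lad([\beta_\ell,\beta_{2\ell}])$ at all. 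Moreover, the inclusions $\lad(\beta_\ell)\subseteq\lad(\beta_\perp^\ell)$, $\lad(\beta_{2\ell})\subseteq\lad(\beta_\perp^{2\ell})$ give $d_{\sol}(\lad(\beta_\perp^\ell),\lad(\beta_\perp^{2\ell}))\leqslant d_{\sol}(\lad(\beta_\ell),\lad(\beta_{2\ell}))$ --- the wrong direction for a lower bound, as you yourself observe in your final paragraph, so the displayed inequality ``$\geqslant c\ell - (\text{const})$'' in your lower-bound step does not follow from what precedes it. The attempted patch (``the perpendiculars through the endpoints separate the two ladders in each fibre, and the image of $[\beta_\ell,\beta_{2\ell}]$ has length $\geqslant c\ell$'') is also insufficient: knowing that the two specific points $\psi_r(\beta_\ell)$ and $\psi_r(\beta_{2\ell})$ are far apart in the fibre $\psi_r(\mathbb{H}^2)$ does not prevent the images of the two perpendiculars from approaching each other elsewhere; in a Gromov-hyperbolic singular flat plane, disjoint bi-infinite geodesics can be asymptotic.

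The missing idea, which is the actual content of the paper's proof, is slope control on the \emph{connecting geodesics between arbitrary points of the two perpendiculars}. First, because at every singularity of $\beta_\perp^\ell$ (and $\beta_\perp^{2\ell}$) the angle on one side is exactly $\pi$ and $\slope(\beta)$ is pinched in $(1/2,2)$, all slopes of $\beta_\perp^\ell$ and $\beta_\perp^{2\ell}$ are themselves pinched in $(1/2,2)$. Second, for any $x\in\beta_\perp^\ell$ and $x'\in\beta_\perp^{2\ell}$, the flat geodesic $[x,x']$ fellow-travels the concatenation $[x,\beta_\ell]\ast[\beta_\ell,\beta_{2\ell}]\ast[\beta_{2\ell},x']$ (by the argument of \Cref{l.fellow-travel}), so for $\ell$ large its slopes are also bounded away from horizontal and vertical, and its flat length is at least comparable to $\ell$. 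Then \Cref{l.quasi-geo} applies directly to $[x,x']$ and yields $d_{\sol}(\lad(x),\lad(x'))\geqslant c\ell$ for \emph{every} such pair; taking the infimum over $x,x'$ gives the lower bound, with no appeal to quasi-convexity of the middle sub-ladder. You should rework the lower bound along these lines.
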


In other words, the flat geodesic segment $\beta$ is coarsely the line of nearest approach for the ladders, that is, it gives coarsely the shortest distance between the ladders.

\begin{proof}
By \Cref{lem:good-segments-good}, $\beta$ is a quasi-geodesic in the singular solv metric. The upper bounds follow immediately from this.

\smallskip
By construction, at every singularity contained in $\beta_\perp^\ell$, the angle subtended on one side is exactly $\pi$.
Similarly for $\beta_\perp^{2 \ell}$. 
Because of the constraints on $\slope(\beta)$, it follows that all slopes $m$ in $\slope(\beta_\perp^\ell)$ and in $\slope(\beta_\perp^{2\ell}) $ also satisfy $1/2 < | m | < 2$. By \Cref{lem:good-segments-good}, $\beta, \beta_\perp^\ell$ and $\beta_\perp^{2 \ell}$ are quasi-geodesics in $\mathbb{H}^3$ for the singular solv metric. 

\smallskip
Let $x \in \beta^\ell_\perp$ and $x' \in \beta^{2\ell}_\perp$. 
By arguing as in \Cref{l.fellow-travel}, the flat geodesic segment $[x, x']$ fellow travels the concatenation $[x, \beta_\ell] \ast [\beta_\ell, \beta_{2\ell}] \ast [\beta_{2\ell}, x']$, where $[x, \beta_\ell] \subset \beta^\ell_\perp$ and $[\beta_{2\ell}, x'] \subset \beta^{2\ell}_\perp$. 
This implies that if $\ell$ is sufficiently large all slopes in $[x, x']$ are bounded away from the horizontal and vertical. 
Thus, $[x, x']$ is also a quasi-geodesic in the singular solv metric and the result follows from this and the fellow-travelling.

\end{proof}

\section{Hyperbolic geometry}

We now pass to the hyperbolic metrics on the fibre and the 3-manifold.
Recall that we denote the hyperbolic metric on $\mathbb{H}^2$ by $d_{\hyp}$ and the hyperbolic metric on $\mathbb{H}^3$ by $D_{\hyp}$. 

\smallskip
The metrics $(\mathbb{H}^2, d_{\fl}) $ and $( \mathbb{H}^2, d_{\hyp}) $ are quasi-isometric; so are the metrics $(\mathbb{H}^3, d_{\sol})$ and $(\mathbb{H}^3, D_{\hyp})$. 
Let $(K_1, A_1)$ and $(K_2, A_2)$ be the quasi-isometry constants in each case and set $K= \max \{ K_1, K_2\}, A = \max \{ A_1, A_2\} $.

\smallskip
As a consequence of these quasi-isometries, we can recast \Cref{l.convex-shadow} and the quasi-convexity of ladders in $(\mathbb{H}^3, d_{\sol})$ in the previous section to conclude that for any parameterised flat geodesic $\beta: [0,T] \to (\mathbb{H}^2, d_{\fl})$ and any $0 < t \leqslant T$
\begin{itemize}
\item $\shadow(\beta_t)$ is quasi-convex in $(\mathbb{H}^2, d_{\hyp})$; and
\item $L (\beta, t) = \lad(\shadow(\beta_t))$ is quasi-convex in $(\mathbb{H}^3, D_{\hyp})$.
\end{itemize}
Let $C_1$ be the quasi-convexity constant in the first instance and $C_2$ the quasi-convexity constant in the second instance.
Set $C = \max \{ C_1, C_2 \}$.

\subsection{Non-backtracking} 

Let $\gamma: (\infty, \infty) \to \mathbb{H}^2$ be a hyperbolic geodesic parameterised by unit speed with $x^{-\infty}$ and $x^\infty$ in $S^1$ its points at infinity, where $\gamma_t \to x^{\pm \infty}$ as $t \to \pm \infty$.  
Let $\beta$ be a bi-infinite flat geodesic that also converges to $x^{\pm \infty}$. 
Note that $\beta$ might not be unique
but any such geodesic fellow-travels $\gamma$ in the hyperbolic metric. 
By resetting the constant $C$, we may assume that the fellow-travelling constant in both hyperbolic and flat metrics can also be chosen to be $C$. 

\medskip
By the same proof as \Cref{l.closest}, there is a unique point $p$ in $\beta$ that is closest to the point $\gamma_0$ in the flat metric. 
We parameterise $\beta$ with unit speed such that $\beta_s \to x^{\pm \infty}$ as $s \to \pm \infty$ and $\beta_0 = p$.

\medskip
For any time $t$ along $\gamma$ such that $t/K - A \geqslant 0$, the flat distance between $\gamma_0$ and $\gamma_t$ is at least $t/K -A$. 
Thus, the flat distance between $\beta_0$ and $\gamma_t$ is at least $t/K-A -C$. 
Thus, for any $t$ that satisfies $(t/K) - A \geqslant 2C$, the point $\gamma_t$ is contained in $\shadow(\beta_{s(t)})$ where
\begin{equation}\label{e.assigned-time}
s(t) = \frac{t}{K}  - A - 2C. 
\end{equation}

Recalling our notation $L(\beta, u) = \lad(\shadow(\beta_u))$, define the function $f_\gamma: \mathbb{R}_{\geqslant 0 } \to \mathbb{R}_{\geqslant 0}$ by 
\[
f_\gamma(t) = D_{\hyp} (\gamma_0, L(\beta, s(t)))
\]
and note that since $\gamma_t$ is contained in $\shadow(\beta_{s(t)})$, we have $f_\gamma(t) \leqslant D_{\hyp} (\gamma_0, \gamma_t)$. 

\smallskip
Since $s(t) < s(t')$ whenever $t< t'$ and since the ladders of nested shadows are nested, we have $f_\gamma(t) \leqslant f_\gamma(t')$, that is, $f_\gamma$ is a non-decreasing function of $t$. 
To prove \Cref{t.linear-progress} it then suffices to prove that $f_\gamma(t) $ grows linearly in $t$. 

\subsection{Progress certificate}\label{s.progress}
We fix $\rho > 0$ and set the constant $r$ in \Cref{l.containment} to be $r = K \rho + A+C$. 
With this value of $r$, we choose $\ell > 0$ to be large enough so that both \Cref{l.containment} and \Cref{l.nesting-ladders} hold.
By increasing $\ell$ further, we may assume that $\ell/ (K D) - A > 0$ and then set $R = \ell/(KD) - A$. 
We now choose an $\ell$-good segment $\beta$. 

\smallskip
We now define a subset in $T^1(\Sigma)$ that will certify progress in the hyperbolic metric.
Let $B(\beta_0, \rho)$ the ball with radius $\rho> 0$ in the hyperbolic metric centred at $\beta_0$. 
Let $V$ be the subset of $T^1 B(\beta_0, \rho)$ consisting of those unit tangent vectors $v$ such that the forward geodesic ray $\gamma_t = g_t v $ passes through the hyperbolic ball $B(\beta_{3 \ell}, \rho)$ centred at $\beta_{3\ell}$. 
Let $\Lambda$ be the image in $T^1(\Sigma)$ of $V$ under the covering projection.

\smallskip
Extending the hyperbolic geodesic $\gamma$ considered in the above paragraph to make it bi-infinite, let $\beta'$ be any flat bi-infinite geodesic that converges to the same points at infinity as $\gamma$.
We may parameterise $\beta'$ with unit flat speed so that 
\begin{itemize}
\item $\beta'_s$ and $\gamma_s$ converge to the same point at infinity as $s \to \infty$; and 
\item $\beta'_0$ is the point on $\beta'$ closest to $\gamma_0$ in the flat metric.
\end{itemize} 
By our choice of constants it follows that $d_{\fl} (\beta'_0, \beta_0) < r $ and $d_{\fl} (\beta'_t, \beta_{3 \ell}) < r$ for some $t$ satisfying $3\ell - 2r < t < 3\ell + 2r$. 
Then, by \Cref{l.containment}, $\mathbb{H}^2 - \shadow(\beta'_0) \subseteq \mathbb{H}^2 - \shadow(\beta_\ell)$ and $\shadow(\beta'_t) \subseteq \shadow(\beta_{2\ell})$. 
By the choice of constants and \Cref{l.nesting-ladders}, it follows that 
\[
D_{\hyp}(\mathbb{H}^3 - L(\beta', 0), L(\beta', t)) \geqslant  D_{\hyp}(\lad(\shadow(\beta_\perp^\ell), \lad(\shadow(\beta_\perp^{2\ell})) \geqslant R.
\]

\smallskip
We now suppress the discussion on the good segment to summarise the conclusions as follows. 
\begin{remark}\label{r.progress}
Let $\gamma$ be a bi-infinite hyperbolic geodesic and let $\beta$ be a parameterised flat geodesic such that it converges to the same points at infinity as $\gamma$ forwards and backwards and $\beta_0$ is the closest point on $\xi$ in the flat metric to $\gamma_0$. 
Let $p(t)$ be the flat time such that $\beta_{p(t)}$ is the point of $\beta$ closest to $\gamma_t$. 
There exists constants $r, \ell > 0$ such that if after projecting to $T^1(\Sigma)$ the unit tangent vector $v(\gamma_t)$ is in $\Lambda$, then $D_{\hyp}(\mathbb{H}^3 - L(\beta, p(t)), L(\beta, p')) \geqslant R$ for some $p' \leqslant p(t)+ 3\ell+2r$.
\end{remark}


\smallskip
Let $\mu_{\lio}$ be the Liouville measure on $T^1(\Sigma)$. 
We may normalise the measure to be a probability measure.
Then, note that $\mu_{\lio}(\Lambda) > 0$. 
We set $m = \mu_{\lio}(\Lambda)$. 

\section{Linear progress in the fibre for a fibered hyperbolic 3-manifold}
\label{s.mainproof1}

We are now derive linear progress, namely \Cref{t.linear-progress}.

\begin{proof}[Proof of \Cref{t.linear-progress}]
Let $\gamma$ and $\beta$ be respectively hyperbolic and flat geodesics as in \Cref{r.progress}.

\smallskip
Recall \Cref{e.assigned-time} for $s(t)$. 
Given $t> K(A+2C)$, the set of times $\{ u \, : \, p(u) \leqslant s(t) \} $ is closed and bounded above.
Let $w$ be its maximum and note that $w \geqslant (1/K) s(t) - A -2C$. 

\smallskip
Let $n_\gamma(w)$ be the number of visits by $\gamma$ to $\Lambda$ (in the sense above) till time $w$. 
From \Cref{r.progress}, we conclude $D_{\hyp} (L(\beta, 0), L(\beta, p(w))) \geqslant (n_\gamma(w) - 1) R$.

\smallskip
Since $p(w) \leqslant s(t)$, \Cref{l.containment} implies $D_{\hyp} (L(\beta, 0), L(\beta, s(t))) \geqslant (n_\gamma(w) - 1) R$.
We deduce
\begin{equation}\label{e.progress} 
f_\gamma(t) = D_{\hyp} (\gamma_0, L(\beta, s(t)) \geqslant (n_\gamma(w) - 1) R - C.
\end{equation}

\smallskip
Let $\chi_\Lambda$ be the characteristic function of $\Lambda$. 
In each visit $\gamma$ spends time at most $2\rho$ in $\Lambda$.
Hence
\begin{equation}\label{e.discrete-continuous} 
n_\gamma(w) \geqslant \frac{1}{2\rho} \int\limits_0^w \chi_\Lambda ( v(\gamma_t)) \, dt,
\end{equation}

\smallskip
By the ergodic theorem, for $\mu_{\lio}$-almost every $v \in T^1 \Sigma$, any lift $\gamma$ in $\mathbb{H}^2$ of the hyperbolic ray determined by $v$, satisfies
\[
\lim_{w \to \infty} \, \frac{1}{w} \int\limits_0^w \,  \chi_\Lambda ( g_t v) \, dt = m
\]
In particular, there exists a time $w_v > 0$ depending only on $v$ such that 
\begin{equation}\label{e.time-bound}
\frac{1}{w} \int\limits_0^w \chi_\Lambda ( g_t v) \, dt \geqslant \frac{m}{2}
\end{equation} 
for all $w > w_v$. 
Given $v$, let $t_v$ be a time along $\gamma$ such that $s(t_v) \geqslant p(w_v)$. 

\smallskip
By combining \Cref{e.progress}, \Cref{e.discrete-continuous} and \Cref{e.time-bound}, we conclude that for $\mu_{\lio}$-almost every $v \in T^1 \Sigma$, along any lift $\gamma$ in $\mathbb{H}^2$ of the hyperbolic geodesic ray determined by $v$, we get
\[
f_\gamma(t) \geqslant \left( \frac{m w}{4\rho} -1 \right) R  
\]
for all $t> t_v$. Since $w \geqslant (1/K) s(t) - A - 2C = (1/K^2) T - \text{constants}$, we conclude the proof of \Cref{t.linear-progress}.

\end{proof}

\begin{proof}[Proof of \Cref{t.random}]

We record some observations from flat geometry. 
Let $\beta[x, p]$ be a flat geodesic ray from the base-point $x$ to a point $p \in S^1 = \partial \mathbb{H}$.
We may write $\beta$ as a (possibly infinite) concatenation $\beta_1 \cup \beta_2 \cup \cdots$ of saddle connections, or slightly more precisely, where $\beta_{j-1} \cap \beta_j$ is a singularity for all $j \geqslant 2$. 
The vertical and horizontal foliations of the pseudo-Anosov monodromy $f$ have no saddle connections. 
We infer that only the initial segment $\beta_1$ and in case of a finite concatenation the terminal segment $\beta_n$ can possibly be vertical/ horizontal.  
We then define the \emph{tilted length} of $\ell_{\text{tilt}} (\beta)$ to be 
\[
\ell_{\text{tilt}} (\beta) = \sum\limits_{m_j \in \slope(\beta) \, : \, 0 < | m_j | < \infty} \ell(\beta_j).
\]
By the discreteness of saddle connection periods, it follows that given $\ell > 0$, the set of $p \in S^1$ such that $\ell_{\text{tilt}} (\beta[x, p]) < 2 \ell$, is finite. 

\smallskip
Since $\mu$ is non-elementary, the limit set $\mathrm{Lim}(G_\mu) \subset S^1 = \partial_\infty \mathbb{H} $ of the semi-group $G_\mu$ generated by the support of $\mu$ is infinite. 
It follows that for any $\ell > 0$, $\mathrm{Lim}(G_\mu)$ contains a point $p$ such that the flat geodesic ray $\beta[x, p]$ has titled length of $\beta$ exceeds $3 \ell$. 
Parameterising $\beta$ by arc-length it follows that there exists a time $\beta_T$ such that the tilted lengths of the segments $[\beta_0, \beta_T]$, $[\beta_T, \beta_{2T}]$ and  $[\beta_{2T}, \beta_{3T}]$ all exceed $\ell$. 
To be precise, we may write $\beta$ as a concatenation such as above and then only the initial segment in the concatenation can be horizontal or vertical . 
In particular, $T$ could be taken to the flat length of this initial segment plus $\ell$. 
Once chosen, there is also a constant $\kappa > 0$ such that any slope $m_j$ along $[\beta_0, \beta_{3T}]$ that is not horizontal or vertical satisfies $1/\kappa < | m_j | < \kappa$. 
Except for (potentially) a horizontal or vertical prefix, the segment $\beta_{3T}$ satisfies the requirements of a good segment, that is, it contains three subsegments of length at least $\ell$ such that the absolute values of the slopes of the saddle connections along them are bounded away from $0$ and $\infty$ by $\kappa$.
In particular, the segment $[\beta_0, \beta_{3T}]$ can be used as a linear progress certificate for the metric $D$, where the progress achieved will depend on this bound $\kappa$.

\smallskip
We now consider $\shadow(\beta_{3T})$ and denote by $\partial_\infty \shadow(\beta_{3T})$ the limit set at infinity of the shadow. 
By definition, fixed points of hyperbolic isometries in the semigroup are dense in $\Lambda$ and since we are in the semigroup, we may assume that we can find the stable fixed point $p'$ of a (semi)-group element $g$ contained in the interior of $\partial_\infty \shadow(\beta_{3T})$. 
 
\smallskip
Let $\nu$ be the stationary measure for the random walk and let $I$ be a subset of $S^1$ such that $\nu(I) > 0$. 
Let $k \geqslant 0$ be the smallest integer such that $g^k I \subset \partial_\infty \shadow(\beta_{3T})$. 
By stationarity of $\nu$, 
\[
\nu (g^k I ) = \mu^{(k)}(g^k) \nu(g^{-k} g^kI ) + \sum_{h \neq g^k} \mu^{(k)} (h) \nu(h^{-1} I) 
\]
where $\mu^{(k)}$ is the $k$-fold convolution of $\mu$. 
Notice that the first term on the right is strictly positive because both $\mu^{(k)} (g^k)$ and $\nu(I)$ are strictly positive.
We deduce that $\nu(\partial_\infty \shadow(\beta_{3T})) \geqslant \nu(g^k I) > 0$. 
Denote $\nu(\partial_\infty \shadow(\beta_{3T}))$ by $\alpha$.

\smallskip
As discussed at the beginning of Section \ref{s.progress}, we now consider the set $\Omega$ of bi-infinite sample paths. 
By convergence to the boundary, almost every $\omega \in \Omega$ defines a bi-infinite hyperbolic geodesic $\gamma_\omega$ in $\mathbb{H}^2$.  
For $R > 0$, let $\Omega_R$ be the subset of those $\omega$ such that $d(\gamma_\omega, x) < R$. 
The subset $R$ is measurable and as $R \to \infty$, we have $(\nu \times \hat{\nu}) (\Omega_R) \to 1$. 
Hence, we may choose $R> 0$ such that $(\nu \times \hat{\nu} ) (\Omega_R) > 1 - \alpha/2$. 

\smallskip
Let $\Lambda$ be the subset of $\Omega_R$ of those $\omega = (w_n)$ such that $w_n x \to \partial \shadow(\beta_{3T})$. 
It follows that $(\nu \times \hat{\nu}) (\Lambda) > \alpha /2 $. 

\smallskip
We now consider the shift map $\sigma: \Omega \to \Omega$. 
Recall that for almost every bi-infinite sample path $\omega$, we get the tracked bi-infinite geodesic $\gamma_\omega$. 
Let $\gamma_\omega(j)$ is the point of $\gamma_\omega$ closest to $w_j x$.

\smallskip
By combining linear progress and sub-linear tracking in the metric $d$, namely \cite[Theorems 1.2 and 1.3]{Mah-Tio}, we deduce that for almost every $\omega = (w_n)$, the distance $d(\gamma_\omega(0), \gamma_\omega(n))$ where $\gamma_\omega(j)$ is the point of $\gamma_\omega$ closest to $w_j x$, grows linearly in $n$. 

\smallskip
By the ergodicity of $\sigma$, it follows that the asymptotic density of times $j$ such that $\sigma^j (\omega) \in \Lambda$ approaches $(\nu \times \hat{\nu} )(\Lambda)$, which exceeds $\alpha/2$; in particular, it is positive. 
Finally, the geodesic rays $\gamma_\omega$ and the ray from $x$ to the same point at infinite are positively asymptotic; that is up to the choice of an appropriate base-point, the distance between the corresponding points on the rays goes to zero.
\Cref{t.random} then follows by the same arguments as the proof of \Cref{t.linear-progress}. 

\end{proof} 

\section{Linear progress in analogous settings}

\begin{remark}\label{r.features}
We were very explicit about the constructions for fibered hyperbolic 3-manifolds but as the astute reader may have observed, the proofs rely on weaker features. 
We distill the essential features below.
\begin{enumerate}
\item A $\pi_1(\Sigma)$--equivariant assignment of shadows along any geodesic ray in the fibre with the property that if $0 < t$ is a sufficiently large time and $t< t'$, then $\shadow(\gamma_t) \subseteq \shadow(\gamma_{t'})$; 
\item a ladder-like construction in the total space with the property that if a shadow is contained in another shadow then its ladder is contained in the ladder of the other; 
\item the existence of finite segments in the fibre that achieve a specified nesting of ladders, that is, for any sufficiently large $R> 0$ there exists a segment $\beta$ of length $3\ell = O(R)$ and $\rho> 0$  such that 
\begin{itemize}
\item the ball $B(\beta_0, \rho) $ in the fibre is contained in the complement of $\shadow(\beta_\ell)$;
\item the ball $B(\beta_{3\ell}, \rho)$ is contained in $\shadow(\beta_{2\ell}) $; 
\item for any geodesic segment $[\beta'_0, \beta'_T]$ such that $\beta'_0 \in B(\beta_0, \rho)$ and $\beta'_T \in B(\beta_{3\ell}, \rho)$ the shadows satisfy $\shadow(\beta_\ell) \subseteq \shadow(\beta'_0)$ and $\shadow(\beta'_T) \subseteq \shadow(\beta_{2\ell})$; and 
\item $D_{\tot} (\partial \lad(\shadow(\beta_\ell)), \partial \lad(\shadow(\beta_{2\ell})) > R$;
\end{itemize}
\item for a good segment $\beta$ that satisfies (3) above, the set of geodesics in the fibre that pass through $B(\beta_0, \rho) $ and $B(\beta_{3\ell}, \rho)$ have a positive mass in the measure used for the sampling.
\end{enumerate}
Our proofs hold verbatim for fibrations (with surface/ surface group fibres) that exhibit these features establishing that a typical geodesic ray in the fibre makes linear progress in the metric on the total space.
\end{remark} 
\begin{figure}[h!]
  \centering
  \includegraphics[width=0.75\textwidth]{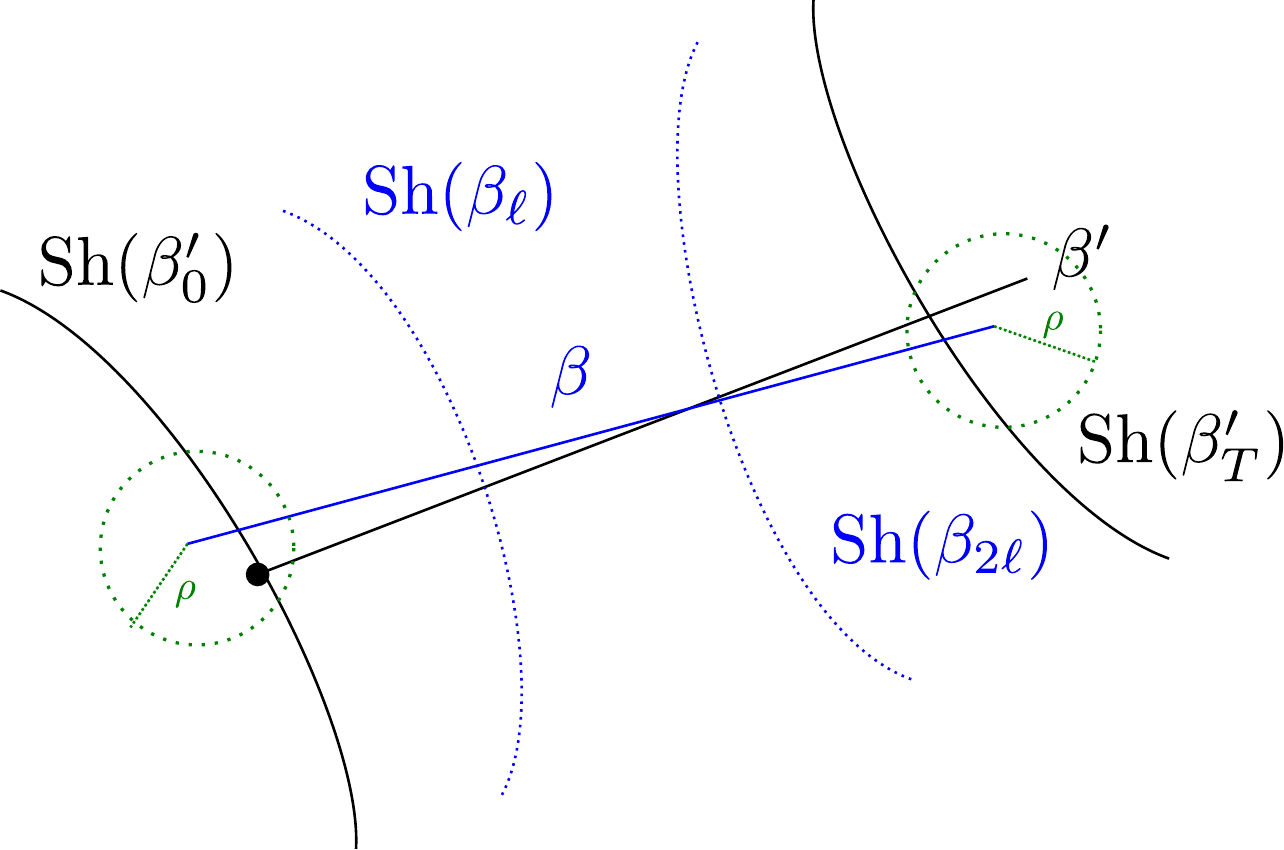}
  \caption{The general setup for shadows, as in
    Remark~\ref{r.features}~(3). $\beta$ is a good segment, whose
    endpoints are close to a segment $\beta'$. The nesting of ``inner'' shadows of $\beta, \beta'$, together with the separating of the ``outer'' shadows of $\beta, \beta'$ imply separation of shadows for $\beta'$, which is retained by its images in all fibres.}
  \label{fig:general-shadows}
\end{figure}

We will now give some explicit settings where these essential features hold and thus derive linear progress in the fibre.
We start with Gromov hyperbolic extensions of
surface groups, then consider canonical bundles over Teichm\"uller disks, and finally the
Birman exact sequence. 
In each case, different parts need to be adapted to check that the features in \Cref{r.features} hold but the general strategy remains the same.

\subsection{Hyperbolic extensions}
In this section, we discuss a finitely generated group extension
\[ 1 \to \pi_1(\Sigma) \to \Gamma \to Q \to 1, \] 
where $\Gamma$ is a hyperbolic group. 
We make no assumptions on $Q$, but remark
that in all known examples of this form, the group $Q$ is virtually free.
It is wide open if other examples exist, for instance, if there is a
hyperbolic extension of $\pi_1(\Sigma)$ by the fundamental group
$\pi_1(\Sigma')$ of another closed surface $\Sigma'$ with negative
Euler characteristic.

\smallskip 
We fix a finite generating set for $\Gamma$ that contains a
generating set for $\pi_1(\Sigma)$ and we equip $\Gamma$ with the
corresponding word metric.  We also choose, once and for all, an
equivariant quasi-isometry
\[ G: \mathbb{H}^2 \to \pi_1(\Sigma), \] and a basepoint
$x_0 \in \mathbb{H}^2$ with $G(x_0) = 1$.
We use it to identify the
kernel $\pi_1(\Sigma)$ of $p: \Gamma \to Q$ with the hyperbolic
plane. Such a choice is not unique (it is only up to quasi-isometry) -- but
we make the choice to use it later to sample geodesics
in $\mathbb{H}^2$. 

\smallskip 
By a result of Mosher, namely \cite[Theorem B]{Mos}, any
short exact sequence such as above with a hyperbolic, non-elementary kernel (like
$\pi_1(\Sigma)$) admits a quasi-isometric section
$\sigma:Q \to \Gamma$ with $\sigma(1) = 1$.

\smallskip
Fixing such a section, any $q\in Q$ induces a quasi-isometry
$\psi_q : \pi_1(\Sigma) \to \pi_1(\Sigma)$ by conjugation with
$\sigma(q)$, that is, $\psi_q(g) = \sigma(q)^{-1} g \, \sigma(q)$.

\smallskip
Suppose that $\beta$ is an infinite geodesic ray in $\pi_1(\Sigma)$ starting at the neutral element $1$. 
The closest point projection to $\beta$ is coarsely well-defined; that is, the image in $\beta$ of the set of closest points has bounded diameter.
For any point $r \in \beta$, let $N(\beta, r)$ denote the set of all points in $\pi_1(\Sigma)$ whose closest point
projection to $\beta$ (as a set) lies after $r$. 
The Gromov boundary of $\pi_1(\Sigma)$ is a circle, and the limit set $\partial_\infty N(\beta, r)$ is an interval. 
We then set our required shadow $\shadow (\beta, r)$ as the union of all bi-infinite geodesics in $\pi_1(\Sigma)$ whose both endpoints at infinity are contained in the limit set $\partial_\infty N(\beta, r)$. 
It is clear from the construction that if $r > r'$ then $\shadow (\beta, r) \subseteq \shadow (\beta, r')$ which is the containment property we require our assignment of shadows to satisfy in feature (1) of \Cref{r.features}. 
Furthermore, since $\pi_1(\Sigma)$ is quasi-isometric to the hyperbolic plane, observe that $\shadow (\beta, r)$ is a quasi-convex subset of $\pi_1(\Sigma)$ with a quasi-convexity constant independent of $r$. 

\smallskip
For any $q \in Q$, the corresponding quasi-isometry $\psi_q$ maps the interval $\partial_\infty N(\beta, r)$ to a possibly different interval $I_q(\beta, r) = (\psi_q)_\infty\partial_\infty N(\beta, r)$. 
Let $\shadow_q(\beta,r) $ be the union of bi-infinite geodesics in $\pi_1(\Sigma)$ whose both endpoints at infinity are contained in $I_q(\beta, r) $. 

\smallskip
Define 
\[ L(\beta,r) = \bigcup_{q \in Q} \sigma(q) \shadow_q(\beta, r) . \] 
This is our analogous ladder-like construction in this context. 
Using a slight extension of the methods of \cite{Mit}, we observe:
\begin{lemma}\label{lem:nesting}
  The set $L(\beta, r)$ is quasi-convex in $\Gamma$ and if $r'> r'$ then
  \[ L(\beta, r) \subseteq L(\beta, r') \]
\end{lemma}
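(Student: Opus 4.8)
The plan is to prove the statement in the form: if $r>r'$ (so that $r$ lies after $r'$ along $\beta$), then $L(\beta,r)\subseteq L(\beta,r')$ — the ladder analogue of the containment $\shadow(\beta,r)\subseteq\shadow(\beta,r')$ recorded just above the lemma, and the reading that makes the displayed inclusion correct. The argument has an easy ``nesting'' half and a less easy ``quasi-convexity'' half.

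For the nesting I would work purely with boundary arcs. If $r>r'$ then any point of $\pi_1(\Sigma)$ whose coarse closest-point projection to $\beta$ lies after $r$ also lies after $r'$, so $N(\beta,r)\subseteq N(\beta,r')$, and taking limit sets of these nested subsets of the hyperbolic group $\pi_1(\Sigma)$ gives $\partial_\infty N(\beta,r)\subseteq\partial_\infty N(\beta,r')$ as sub-arcs of $S^1=\partial_\infty\pi_1(\Sigma)$. For each $q\in Q$ the boundary homeomorphism $(\psi_q)_\infty$ preserves this inclusion, so $I_q(\beta,r)\subseteq I_q(\beta,r')$; a bi-infinite geodesic of $\pi_1(\Sigma)$ with both endpoints in $I_q(\beta,r)$ then has both endpoints in $I_q(\beta,r')$, whence $\shadow_q(\beta,r)\subseteq\shadow_q(\beta,r')$. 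Left-translating by $\sigma(q)$ and taking the union over $q\in Q$ gives $L(\beta,r)\subseteq L(\beta,r')$.

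For quasi-convexity in $\Gamma$ I would adapt Mitra's ladder argument \cite{Mit}, the one change being that the rung over $q\in Q$ is the shadow $\shadow_q(\beta,r)$ rather than a single geodesic. Two uniformity facts are needed: (i) each $\shadow_q(\beta,r)$ is a union of bi-infinite geodesics of $\pi_1(\Sigma)$ with endpoints in a single sub-arc of $S^1$ and so is quasi-convex in $\pi_1(\Sigma)$ with a constant depending only on the hyperbolicity constant of $\pi_1(\Sigma)$, hence independent of $q$, $r$ and $\beta$; and (ii) by Mosher's quasi-isometric section the $\psi_q$, and the coarse maps they induce between rungs over adjacent elements of $Q$, have uniform quasi-isometry constants. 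Granting these, I would build a retraction $\Pi\colon\Gamma\to L(\beta,r)$ fibrewise: over the coset $\sigma(q)\pi_1(\Sigma)$, left-translate by $\sigma(q)^{-1}$, apply the (coarsely well-defined and coarsely Lipschitz, by (i)) nearest-point projection of $\pi_1(\Sigma)$ onto $\shadow_q(\beta,r)$, and left-translate back by $\sigma(q)$. The intra-fibre coarse-Lipschitz bound for $\Pi$ is immediate from (i); the inter-fibre bound follows from (ii) exactly as in \cite{Mit}. Since a coarse Lipschitz retraction onto a coarsely connected subset of a hyperbolic group makes that subset quasi-convex, we are done.

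The main obstacle is the inter-fibre estimate: one must check that for adjacent $q,q'$ the shadows $\shadow_q(\beta,r)$ and $\shadow_{q'}(\beta,r)$ are matched by the quasi-isometry coming from the section closely enough, uniformly in $q$, that the nearest-point projections onto them differ by a bounded amount. This is where Mosher's uniformity is essential, and where replacing ``geodesic'' by ``shadow'' in Mitra's argument has to be handled with some care — although, as the text indicates, it is only a slight extension of \cite{Mit}.
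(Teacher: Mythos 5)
Your proposal is correct and follows essentially the same route as the paper: the nesting half is proved by pushing the inclusion $N(\beta,r)\subseteq N(\beta,r')$ to boundary intervals and through the $(\psi_q)_\infty$, and quasi-convexity is obtained by adapting Mitra's fibrewise coarse Lipschitz retraction, with the intra-fibre estimate coming from uniform quasi-convexity of the shadows and the inter-fibre estimate from the fact that the (uniform, via Mosher's section) quasi-isometries between fibres coarsely commute with nearest-point projections to quasi-convex sets. You also correctly diagnose the typo in the statement ($r>r'$ is the intended hypothesis) and identify the same point of care (the inter-fibre matching of shadows) that the paper's proof sketch highlights.
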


\begin{proof}
We briefly indicate how the proof in \cite{Mit} needs to be adapted. 
The basic strategy is the same -- we define a Lipschitz projection $\Pi: \Gamma \to L(\beta, r)$. 
Since one can then project geodesics in $\Gamma$ to $L(\beta, r)$ without increasing their length too much, this will show un-distortion. 
Hyperbolicity of $\Gamma$ then implies quasi-convexity.

\smallskip	
As in \cite{Mit}, the definition of $\Pi$ involves the \emph{fibrewise} closest-point projection to the sets $\shadow_q(\beta, r)$. 
To show that the projection is Lipschitz, one needs to control the distance between $\Pi(x), \Pi(y)$ for points $x,y$ of distance $1$. 
There are two cases to consider. If $x,y$ are in the same fibre, the estimate stems from the fact that closest point
projections to quasi-convex sets in hyperbolic spaces are Lipschitz. 
If $x,y$ lie in adjacent fibres, then the estimate in \cite{Mit} relies on the fact that quasi-isometries coarsely commute with projections to geodesics in hyperbolic spaces. 
This fact is still true for projections to quasi-convex sets (with essentially the same proof), and so the argument extends. 
  
\smallskip  
Finally, we note that for $r < r'$, we have $\partial_\infty N(\beta, r) \subseteq \partial_\infty N(\beta, r')$ which implies that
$I_q(\beta, r) \subset I_q(\beta, r')$ for all $q \in Q$. 
This then implies $\shadow_q(\beta, r) \subseteq \shadow_q(\beta, r')$ and hence
\[ L(\beta, r) \subseteq L(\beta, r'). \]
  
\end{proof}

\Cref{lem:nesting} thus ensures that feature (2) in \Cref{r.features} holds.

\smallskip
We now construct appropriate \texttt{"}good\texttt{"} segments to achieve a specified nesting.
To start, we need to better understand what the shadows look like at infinity.
\begin{lemma}
  In the Gromov boundary $\partial_\infty \Gamma$, both $\partial_\infty L (\beta,r)$ and its
  complement have nonempty interior.
\end{lemma}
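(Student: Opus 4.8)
The plan is to present $L(\beta,r)$ as one ``half'' of a coarse decomposition of $\Gamma$ whose overlap is thin at infinity. Write $I=\partial_\infty N(\beta,r)\subseteq S^1$. Since $N(\beta,r)$ is a coarse half-space determined by the part of $\beta$ beyond $r$, standard hyperbolicity of $\pi_1(\Sigma)$ gives that $I$ is a proper closed subarc of $S^1$ with nonempty interior, and so is the complementary closed arc $J=\overline{S^1\setminus I}$; let $\alpha\subseteq\pi_1(\Sigma)$ be the bi-infinite geodesic joining the two endpoints common to $I$ and $J$. Imitating the definition of $L(\beta,r)$ with $J$ in place of $I$ --- i.e. with $\shadow'_q(\beta,r)$ the union of the bi-infinite geodesics in $\pi_1(\Sigma)$ whose two endpoints lie in $J_q=(\psi_q)_\infty(J)$ --- form the \emph{complementary ladder} $L'(\beta,r)=\bigcup_{q\in Q}\sigma(q)\,\shadow'_q(\beta,r)$. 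The proof of Lemma~\ref{lem:nesting} applies verbatim to show $L'(\beta,r)$ is quasiconvex in $\Gamma$, and by \cite{Mit} so is the ladder $\lad(\alpha)=\bigcup_{q\in Q}\sigma(q)\,\psi_q(\alpha)$ over the single geodesic $\alpha$. Note that, since the endpoints of $\alpha$ lie in both $I_q$ and $J_q$, we have $\psi_q(\alpha)\subseteq\shadow_q(\beta,r)\cap\shadow'_q(\beta,r)$ for every $q$, so $\lad(\alpha)\subseteq L(\beta,r)$ and $\lad(\alpha)\subseteq L'(\beta,r)$.

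The first step is two coarse facts, established fibrewise. In $\mathbb{H}^2$ the union of all bi-infinite geodesics with both endpoints in a closed arc $A$ is precisely the closed half-plane bounded by the geodesic joining the endpoints of $A$, on the $A$-side, and two complementary closed half-planes have $C$-neighbourhoods that meet only within a $C$-neighbourhood of their common boundary geodesic. Transporting this through the conjugations $\psi_q$ --- which are \emph{uniform} quasi-isometries because $\sigma$ is a quasi-isometric section --- gives, with constants independent of $q$, that $\shadow_q(\beta,r)\cup\shadow'_q(\beta,r)$ coarsely fills the coset of $q$ while $\shadow_q(\beta,r)$ and $\shadow'_q(\beta,r)$ coarsely meet only near $\psi_q(\alpha)$. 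Because $\sigma$ is a quasi-isometric section, the coarse projection between cosets over adjacent vertices of $Q$ is realised in the $\mathbb{H}^2$-coordinate by a conjugation $\psi_a$, so these fibrewise statements globalise: there is a constant $C_0$ with $\Gamma=N_{C_0}(L(\beta,r))\cup N_{C_0}(L'(\beta,r))$ and $N_{C_0}(L(\beta,r))\cap N_{C_0}(L'(\beta,r))\subseteq N_{C_0}(\lad(\alpha))$. Passing to the boundary --- using that a geodesic ray to a limit point of a quasiconvex set remains in a bounded neighbourhood of it --- together with the containments from the first paragraph yields
\[
\partial_\infty L(\beta,r)\cup\partial_\infty L'(\beta,r)=\partial_\infty\Gamma,
\qquad
\partial_\infty L(\beta,r)\cap\partial_\infty L'(\beta,r)=\partial_\infty\lad(\alpha).
\]

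The crux, and the step I expect to be the main obstacle, is that $\partial_\infty\lad(\alpha)$ is \emph{strictly} contained in each of $\partial_\infty L(\beta,r)$ and $\partial_\infty L'(\beta,r)$. If $Q$ is finite, $\Gamma$ is quasi-isometric to $\pi_1(\Sigma)$, $\partial_\infty\Gamma\cong S^1$, and the lemma is immediate (both sets are arcs with nonempty interior); so assume $Q$ infinite. Pick a bi-infinite geodesic $\alpha'\subseteq\pi_1(\Sigma)$ whose endpoints lie in the interior of $J$ and are distinct from those of $\alpha$, so that $\alpha'$ is disjoint from $\alpha$ and $\psi_q(\alpha')\subseteq\shadow'_q(\beta,r)$; then $\lad(\alpha')\subseteq L'(\beta,r)$, hence $\partial_\infty\lad(\alpha')\subseteq\partial_\infty L'(\beta,r)$. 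It therefore suffices to see that $\partial_\infty\lad(\alpha')\not\subseteq\partial_\infty\lad(\alpha)$, and symmetrically with $L$ and $L'$ exchanged. Were this to fail, quasiconvexity of $\lad(\alpha)$ would force $\lad(\alpha')$ into a bounded $\Gamma$-neighbourhood of $\lad(\alpha)$, which should contradict the fact that the fibre-geodesics $\psi_q(\alpha)$ and $\psi_q(\alpha')$ have \emph{uniformly separated} endpoint pairs on $S^1$ (the $\psi_q$ being uniformly quasi-Möbius on the boundary). Turning this into an actual contradiction despite the exponential distortion of the cosets is the one genuinely delicate point; in the classical fibered-$3$-manifold case (and more generally when $Q$ is virtually cyclic) it is transparent, because $\lad(\alpha)$ is then quasi-isometric to $\mathbb{H}^2$ and $\partial_\infty\lad(\alpha)$ is a Jordan curve in the two-sphere $\partial_\infty\Gamma$, so it can neither be all of $\partial_\infty\Gamma$ nor contain the boundary of a strictly larger quasiconvex ``half-space''.

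Granting that step, the conclusion is pure set theory. From the displayed equality and strict inclusions, $\partial_\infty L(\beta,r)\cap\partial_\infty L'(\beta,r)=\partial_\infty\lad(\alpha)\subsetneq\partial_\infty L(\beta,r)$, so $\partial_\infty L'(\beta,r)\not\subseteq\partial_\infty L(\beta,r)$; combined with $\partial_\infty L(\beta,r)\cup\partial_\infty L'(\beta,r)=\partial_\infty\Gamma$ this forces $\partial_\infty L(\beta,r)\neq\partial_\infty\Gamma$, and since $\partial_\infty L(\beta,r)$ is closed its complement is nonempty and open. Symmetrically $\partial_\infty\lad(\alpha)\subsetneq\partial_\infty L'(\beta,r)$ gives $\partial_\infty L(\beta,r)\not\subseteq\partial_\infty L'(\beta,r)$, hence $\partial_\infty L'(\beta,r)\neq\partial_\infty\Gamma$, so $\partial_\infty\Gamma\setminus\partial_\infty L'(\beta,r)$ is a nonempty open set, which by the displayed equality is contained in $\partial_\infty L(\beta,r)$; thus $\partial_\infty L(\beta,r)$ has nonempty interior. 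Everything except the strict inclusion of the previous paragraph is the same fibrewise coarse bookkeeping already underlying Lemma~\ref{lem:nesting}.
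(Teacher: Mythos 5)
Your argument has a genuine gap, and it is precisely at the step you flag as ``delicate.'' The overall structure (complementary ladder $L'$, the common ladder $\lad(\alpha)$, the coarse covering $\partial_\infty L\cup\partial_\infty L'=\partial_\infty\Gamma$, and the intersection $\partial_\infty L\cap\partial_\infty L'=\partial_\infty\lad(\alpha)$) is reasonable and the final set-theoretic bookkeeping is correct. But everything reduces to the unproved claim that $\partial_\infty\lad(\alpha')\not\subseteq\partial_\infty\lad(\alpha)$, and the reasoning you offer for it is broken in two places. First, the implication \emph{``$\partial_\infty\lad(\alpha')\subseteq\partial_\infty\lad(\alpha)$ together with quasiconvexity of $\lad(\alpha)$ forces $\lad(\alpha')\subseteq N_C(\lad(\alpha))$''} is not valid for general quasiconvex subsets of a hyperbolic space: a horoball and a geodesic ray to its center in $\mathbb{H}^2$ are both (quasi)convex, have the same (one-point) limit set, yet the horoball is in no bounded neighbourhood of the ray. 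So you would need a structural reason why this containment of boundaries does force coarse containment of ladders, and none is given. Second, even granting that implication, you explicitly leave open how to derive a contradiction from uniform fibrewise separation of endpoints in the presence of exponential distortion, and that is not a routine step to fill.

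The paper avoids all of this by arguing directly. Choose an infinite-order $g\in\pi_1(\Sigma)$ whose attracting fixed point lies in $\partial_\infty N(\beta,r)$; the sequence $g^n$ is a quasi-geodesic in $\Gamma$ (cyclic subgroups of infinite-order elements in hyperbolic groups are quasiconvex, hence undistorted). One then shows $D_\Gamma(g^n,\partial L(\beta,r))\to\infty$ by splitting $Q$ into a finite ball $B$ (for $q\in B$ the fibrewise distance $d_{\pi_1(\Sigma)}(g^n,\partial\shadow_q(\beta,r))$ grows, since each $\psi_q$ is a fixed quasi-isometry and $(\psi_q)_\infty$ maps the attracting point into the interior of $I_q$) and its complement (for $q\notin B$ the Lipschitz projection $\Gamma\to Q$, equivalently $D_\Gamma(1,\sigma(q))$, bounds the distance below). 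Once $g^n$ sits deep inside $L(\beta,r)$, every geodesic ray from $1$ through a ball around $g^n$ limits to a point of $\partial_\infty L(\beta,r)$, giving nonempty interior. The complement is handled by the same argument applied to the shadow of the oppositely-pointing ray $\beta^-$. In short: the paper exhibits an explicit open set of boundary points inside $\partial_\infty L(\beta,r)$, whereas your route is indirect and ultimately requires the very same ``deep nesting'' estimate --- so the detour through $L'$ and $\lad(\alpha)$ adds machinery without closing the gap.
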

\begin{proof}
We first claim that there is a quasi-geodesic in $\Gamma$ whose endpoint lies in $\partial_\infty L (\beta, r)$. 
Namely, let $g \in \pi_1(\Sigma)$ be an element in the fibre so that the sequence $\beta'_n = g^n$ converges to a
point in $\partial_\infty N(\beta, r)$ as $n \to \infty$. 
Since the cyclic group generated by any infinite order element in a hyperbolic group is quasi-convex, hence undistorted, (compare e.g. \cite[III.F.3.10]{BH}), the sequence $\beta'_n$ is also a quasi-geodesic in $\Gamma$, thus proving the claim.
  
\smallskip  
We now claim that $\beta'_n$ nests in $L(\beta, r)$ as $n \to \infty$. 
That is, we claim that for any distance $R> 0$, we have $D_\Gamma (\beta'_n, \partial L (\beta, r)) > R$ for all sufficiently large $n$. 
Note that $d_{\pi_1(\Sigma)} (\beta'_n, \partial \shadow(\beta, r))$ becomes arbitrarily large as $n \to \infty$. 
We now choose a radius for a ball $B$ centred at identity in $Q$ such that $D_\Gamma(1, \sigma(q) ) > R$ for all $q \in Q - B$. 
We can then arrange $n$ to be sufficiently large so that the distance $D_\Gamma (\beta'_n, \partial \shadow_q (\beta, r) ) > R$ for all $q \in B$. 
The claim follows because $D_\Gamma (\beta'_n, \partial \shadow_q (\beta, r) ) > R$ for near-by fibres corresponding to $q \in B$, and $D_\Gamma(1, \sigma(q) ) $ is a lower bound on $D_\Gamma (\beta'_n, \partial \shadow_q (\beta, r) ) > R$ for all fibres corresponding to $Q - B$ and $D_\Gamma(1, \sigma(q) )  > R$ was arranged for these fibres.

\smallskip
By the claim above, we can choose $n$ sufficiently large to arrange that the ball in $\Gamma$ centred at $\beta'_n$ is contained deep in $L(\beta, r)$. 
It then follows that all geodesic rays in $\gamma$ starting at identity and passing through this ball converge to a point in the Gromov boundary that is contained in $\partial_\infty L(\beta, r)$. 
In particular, this means that $\partial_\infty L(\beta, r)$ has non-empty interior in $\partial_\infty \Gamma$. 

\smallskip
The claim for the complement $\partial_\infty \Gamma - \partial_\infty L(\beta,r)$ follows because the complement of $\shadow(\beta, r)$ in $\pi_1(\Sigma)$ contains the shadow $\shadow(\beta^-,r)$, where $\beta^-$ is the geodesic ray in $\pi_1(\Sigma)$ with its initial direction opposite to $\beta$. 

\end{proof}

\begin{corollary}
  There is an element $g_0$ of $\pi_1(\Sigma)$ whose axis has one endpoint
  in $\partial_\infty L(\beta,r)$ and one endpoint in the complement 
  of $\partial_\infty \Gamma - \partial_\infty L(\beta,r)$.
\end{corollary}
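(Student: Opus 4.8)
The plan is to obtain $g_0$ by a ping--pong construction on the circle $\partial_\infty\Gamma$, starting from two ``independent'' infinite--order elements of $\pi_1(\Sigma)$: one whose attracting fixed point lies deep inside $\partial_\infty L(\beta,r)$, and one whose attracting fixed point lies deep inside the complement. Write $U=\operatorname{int}\partial_\infty L(\beta,r)$ and $V=\operatorname{int}\bigl(\partial_\infty\Gamma\smallsetminus\partial_\infty L(\beta,r)\bigr)$; by the previous lemma these are nonempty open subsets of $\partial_\infty\Gamma$, and they are disjoint.

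First I would extract the two elements. The proof of the previous lemma already produces an infinite--order $a\in\pi_1(\Sigma)$ whose powers $a^n$ eventually lie in a ball contained deep inside $L(\beta,r)$; since cyclic subgroups of the hyperbolic group $\Gamma$ are quasi--convex, hence undistorted, $(a^n)$ is a quasi--geodesic in $\Gamma$, so the attracting fixed point $a^+\in\partial_\infty\Gamma$ of $a$ lies in $U$. Running the identical argument with the opposite ray $\beta^-$ in place of $\beta$ --- whose shadow lies in the complement of $\shadow(\beta,r)$, as recorded at the end of that proof --- yields an infinite--order $b\in\pi_1(\Sigma)$ with $b^+\in V$. (Alternatively one may invoke that $\pi_1(\Sigma)$, being an infinite normal subgroup of the non--elementary hyperbolic group $\Gamma$, has limit set all of $\partial_\infty\Gamma$, so that attracting fixed points of its infinite--order elements are dense; this directly supplies such $a,b$.)

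Next I would establish independence. From $U\cap V=\varnothing$ we get $a^+\neq b^+$, and the standard description of elementary subgroups of a hyperbolic group --- two infinite--order elements sharing a boundary fixed point have equal fixed--point pairs --- then forces $\{a^+,a^-\}\cap\{b^+,b^-\}=\varnothing$. Thus $a$ and $b$ act on $\partial_\infty\Gamma$ by north--south dynamics with four distinct fixed points, and a standard ping--pong argument applies: choosing pairwise disjoint open neighbourhoods of $a^\pm,b^\pm$ with the one around $a^+$ inside $U$ and the one around $b^+$ inside $V$, for all sufficiently large $N$ the element $g_0:=a^{N}b^{-N}\in\pi_1(\Sigma)$ is loxodromic on $\partial_\infty\Gamma$ with attracting fixed point $g_0^+$ in the neighbourhood of $a^+$ (so $g_0^+\in U$) and repelling fixed point $g_0^-$ in the neighbourhood of $b^+$ (so $g_0^-\in V$). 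Since $\langle g_0\rangle$ is undistorted in $\Gamma$, any of its orbits is a bi--infinite quasi--geodesic, i.e.\ a quasi--axis for $g_0$, whose two endpoints on $\partial_\infty\Gamma$ are exactly $g_0^+\in\partial_\infty L(\beta,r)$ and $g_0^-\in\partial_\infty\Gamma\smallsetminus\partial_\infty L(\beta,r)$, as required.

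The only point demanding genuine care is the ping--pong bookkeeping: one must take $g_0=a^{N}b^{-N}$ rather than $a^{N}b^{N}$, so that on the $b$--side one iterates $b^{-N}$ --- whose repelling point is $b^+$ --- thereby placing the repelling fixed point of the product near $b^+\in V$ rather than near $b^-$. Everything else is soft: north--south dynamics on the compact space $\partial_\infty\Gamma$, the elementary--subgroup dichotomy for hyperbolic groups, and undistortion of cyclic subgroups.
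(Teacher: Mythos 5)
Your ping--pong argument is a valid alternative, but it takes a genuinely different route from the paper. The paper's proof gives two one--step constructions: either take a \emph{single} infinite--order $g\in\pi_1(\Sigma)$ as in the previous lemma, but now arranging \emph{both} ends of its power sequence to land on the correct sides (one in $\partial_\infty L(\beta,r)$, the other outside), or pull back a pair of open sets $U^+, U^-\subset\partial_\infty\Gamma$ through the Cannon--Thurston map to intervals $I^\pm$ in $\partial_\infty\pi_1(\Sigma)$ and pick an element of $\pi_1(\Sigma)$ whose axis endpoints lie in $I^+, I^-$. You instead control only one endpoint of each of two elements $a,b$ and then merge them via ping--pong to manufacture $g_0=a^Nb^{-N}$. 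Both work; the paper's routes are shorter and avoid the independence bookkeeping.

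One step in your independence argument is not correct as stated. From the elementary--subgroup dichotomy and $a^+\neq b^+$ you \emph{cannot} conclude $\{a^+,a^-\}\cap\{b^+,b^-\}=\varnothing$: it is still consistent that $\{a^\pm\}=\{b^\pm\}$ with $a^+=b^-$ and $a^-=b^+$. In that exceptional case the ping--pong setup fails (only two distinct fixed points), but notice that then $a$ itself already has $a^+\in U$ and $a^-=b^+\in V$, so you may take $g_0=a$ and skip ping--pong entirely. Alternatively, replace $b$ by a $\pi_1(\Sigma)$--conjugate $hbh^{-1}$ with $hb^+$ still in $V$ and $hb^-\notin\{a^\pm\}$ (possible since the $\pi_1(\Sigma)$--action on the boundary is minimal and $V$ is open), after which all four fixed points are distinct. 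With either patch the argument is complete.
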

\begin{proof}
  One can either use an element as in the proof of the previous lemma,
  also assuming that $g^n$ converges to a point outside
  $\partial_\infty\shadow(\beta,r)$.

\smallskip
  Alternatively, choose open sets $U^+, U^-$ in
  $\partial_\infty L(\beta,r)$ and its complement. Since we have
  a continuous Cannon-Thurston map, there are
  intervals $I^+, I^-$ of the boundary $\partial_\infty \pi_1(\Sigma)$
  mapping (under this Cannon-Thurston map) into $U^+, U^-$.  We can
  choose an element $g_0$ of $\pi_1(\Sigma)$ whose axis endpoints are
  contained in $I^+, I^-$. This has the desired property.

\end{proof}

Recall that any infinite order element of a Gromov hyperbolic group
acts with north-south dynamics on the Gromov boundary.  
The element $g$ guaranteed by the previous corollary will act hyperbolically on
$\Gamma$ with axis endpoints in $U^+, U^-$.  
Thus, a large power $h = g^N$ has the property that it nests $L(\beta, r)$
properly into itself; in particular by choosing $N$ large enough, we
can guarantee that the distance between the boundaries of
$L(\beta, r)$ and $h L (\beta, r)$ is at least $R>0$ for any
choice of $R$. 
In other words, similar to $\ell$-good segments in the fibered case, 
feature (3) in \Cref{r.features} can be achieved by a geodesic segment $\beta$ in $\pi_1(\Sigma)$ 
from identity to a suitably high power $h = g^N$. 

\smallskip
Finally, since $\partial_\infty \shadow(\beta, r)$ is an interval with non-empty interior it has
positive measure with respect to geodesic sampling using the fixed quasi-isometry $G: \mathbb{H}^2 \to \pi_1(\Sigma)$.

\smallskip
Thus, feature (4) of \Cref{r.features} also holds and hence by replicating the proof of \Cref{t.random} we conclude that a typical ray in $\pi_1(\Sigma)$ makes linear progress in $\Gamma$. 

\subsection{Teichm\"uller disks}
In this section we consider the universal curve over a Teichm\"uller disk. 

\smallskip
A marked holomorphic quadratic differential on a closed Riemann surface defines charts to the complex plane via contour integration of a square root of the differential.
The transition functions are half-translations, that is of the form $z \to \pm z + c$. 
The natural action of $\mathrm{SL}_2(\mathbb{R})$ on $\mathbb{C} = \mathbb{R}^2$ preserves the form of the transitions and hence descends to an action on such differentials.

\smallskip
Let $q_0$ be such a quadratic differential on a Riemann surface $X_0$. 
The conformal structure is unchanged under the $\mathrm{SO}_2 (\mathbb{R})$-action on $q_0$ and hence the image of the orbit $\mathrm{SL}_2(\mathbb{R})q$ in Teichm\"{u}ller space is an isometrically embedded copy of $\mathbb{H}^2 = \mathrm{SL}_2 (\mathbb{R}) / \mathrm{SO}_2 (\mathbb{R})$.
This is called a \emph{Teichm\"{u}ller disk}.
We will use the notation $D_{q_0}$ for the disk and $X_0$ for the underlying marked Riemann surface for $q_0$. 
The canonical projection $\mathrm{SL}_2(\mathbb{R}) q_0 \to D_{q_0}$ exhibits the orbit as the unit tangent bundle of the Teichm\"uller disk. 
In particular, since $D_{q_0}$ is contractible the unit tangent bundle is trivial.

\smallskip
The universal curve over $D_{q_0}$ lifts to its universal cover to us gives the the bundle
\[
\mathbb{H}^2 \to E \to D_{q_0}.
\]

\smallskip
As discussed in e.g. \cite[Section 3.3-3.5]{DDLS}, the triviality of the unit tangent bundle allows us to equip the total space $E$ with a metric $d_E$ as follows.
\begin{itemize}
\item We choose a section $\sigma: D_{q_0}  \to \mathrm{SL}_2(\mathbb{R}) q_0$ such that $\sigma[q_0] = q_0$; 
\item We equip the fibre over $[q]$ with the lift of the singular flat metric on $\Sigma$ given by the differential $\sigma[q]$.  
\end{itemize}
In particular, the metrics on nearby fibres differ by (quasi-conformal) affine diffeomorphisms with bounded dilatation.

\smallskip
In fact, there is a convenient section of the unit tangent bundle. 
Namely, given any point $X \in D_q$, there is a (unique) Teichm\"uller extremal map from $X_0$ to $X$. 
The map pushes the quadratic differential $q_0$ on $X_0$ to a quadratic differential $q_X$ on $X$ -- which, if $X$ is defined by $Aq$ for $A\in\mathrm{SL}_2(\mathbb{R})$, differs from $Aq$ by a rotation.

\smallskip
We fix, once and for all, an identification of the fibre over the
base-point $q$ with the hyperbolic plane $\mathbb{H}^2$ up to
quasi-isometry. 
The goal is to discuss the behaviour of
a typical (hyperbolic) geodesic ray in that fibre for the metric $d_E$.

\smallskip 
One major difference from the preceding sections is that the
total space $E$ is no longer hyperbolic. 
This will mandate several adaptations from the previous two cases.

\smallskip 
We begin by defining shadows in the fibre as in the fibered 3-manifold case, that is, for a
flat geodesic ray $\beta$, we set $\shadow(\beta,r)$ as the shadow in the flat metric that contains the point at infinity $\partial_\infty \beta$ and has the optimal perpendicular $\beta^\perp(r)$ as the boundary.
As the hyperbolic and the singular flat metric are quasi-isometric, given a hyperbolic ray $\gamma$ in the fibre and a time $t$ along it, we can assign the shadow $\shadow(\beta, s(t))$, where $\beta$ is a flat ray fellow-travelling $\gamma$ for all time and $s(t)$ is a time along $\gamma$ assigned as in \Cref{e.assigned-time}. 
The assigned shadows then satisfy feature (1) in \Cref{r.features}.

\smallskip
We are now set up to carry out a ladder-like construction in this context.
Let $\beta$ be a flat geodesic. 
We define the ladder of $\beta$ by 
\[
\lad(\beta) = \bigcup_{X \in D_{q_0}} \sigma(X) (\beta) 
\]
where as above, $\sigma(X)$ is the affine diffeomorphism given by the trivialisation of the unit tangent bundle. 
Note that since $\sigma(X)$ is an affine map, it takes a flat geodesic in the fibre over $X_0$ to a flat geodesic in the fibre over $X$.

\smallskip
We then define 
 \[ L(\beta,r) = \bigcup_{X \in D_{q_0}}
\sigma(X) \shadow (\beta, r). \] 
It follows that along a flat geodesic ray $\beta$ the sets $L(\beta, r)$ satisfies feature (2) in \Cref{r.features}.

\smallskip
Arguing as in Lemma~\ref{lem:nesting} (observing that only the proof of quasiconvexity, and not of undistortion, relies on the hyperbolicity of the ambient space), we obtain the following.
\begin{lemma}\label{lem:TM-ladders}
	There is a constant $K$, so that for any flat geodesic $\beta$ in
	$\mathbb{H}^2$ and $r> 0$, the ladder $\lad(\beta)$ and the set $L(\beta, r)$ are $K$--undistorted in $E$ for the bundle metric
	$d_E$ defined above.
\end{lemma}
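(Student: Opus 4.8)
The plan is to adapt the projection argument of Mitra \cite{Mit}, as was already done in the proof of \Cref{lem:nesting}, but to separate the part of that argument that yields \emph{undistortion} from the part that yields \emph{quasiconvexity}. The key observation is that Mitra's construction produces a coarsely Lipschitz retraction $\Pi \colon E \to \lad(\beta)$ (resp.\ $\Pi \colon E \to L(\beta, r)$), and the existence of such a retraction already forces the image to be undistorted, \emph{regardless} of whether the ambient space is hyperbolic. Only the subsequent step ``undistorted $\Rightarrow$ quasiconvex'' uses hyperbolicity, and that step is not needed for the present statement.

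Concretely, first I would construct $\Pi$ fibrewise: for a point $y$ in the fibre over $X$, let $\Pi(y)$ be the closest-point projection of $y$ to the flat geodesic $\sigma(X)(\beta)$ (resp.\ to the flat quasiconvex set $\sigma(X)\shadow(\beta,r)$) inside that fibre, which is coarsely well defined since each fibre is quasi-isometric to $\mathbb{H}^2$ and closest-point projection to a quasiconvex subset of a hyperbolic space is coarsely well defined and coarsely Lipschitz. Then I would check that $\Pi$ is coarsely Lipschitz on all of $E$ by the usual two-case analysis: for $x,y$ in the same fibre the bound comes from coarse-Lipschitzness of the fibrewise projection, with a constant uniform in $X$ because the flat metrics on distinct fibres differ by affine maps of uniformly bounded dilatation; for $x,y$ in adjacent fibres (over $X, X'$ with $d_{D_{q_0}}(X,X')$ bounded) one uses that the affine transition map $\sigma(X')\sigma(X)^{-1}$ is a uniform quasi-isometry and that quasi-isometries coarsely commute with closest-point projection to quasiconvex sets in hyperbolic spaces --- exactly the fact invoked in the proof of \Cref{lem:nesting}. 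Having $\Pi$ coarsely Lipschitz and restricting to the identity on $\lad(\beta)$ (resp.\ $L(\beta,r)$), undistortion is immediate: for $p,q$ in the ladder, any path in $E$ joining them projects under $\Pi$ to a path in the ladder of comparable length, so $d_{\lad(\beta)}(p,q) \leqslant K' d_E(p,q) + K'$ for a uniform $K'$.

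The main obstacle I anticipate is verifying that all the constants --- the coarse-Lipschitz constant of the fibrewise projections, the quasi-isometry constants of the affine transition maps, and the quasiconvexity constants of the sets $\sigma(X)\shadow(\beta,r)$ --- can be taken uniform in $X$ and, crucially, independent of $\beta$ and of $r$. Uniformity in $X$ follows from the compactness of $\mathrm{SO}_2(\mathbb{R})$ together with cocycle/boundedness properties of the Teichm\"uller geodesic flow (the section $\sigma$ has uniformly bounded dilatation over any given direction, and over the whole disk the relevant estimates degrade only through the flow, which is accounted for in Mitra's argument). Independence of $\beta$ and $r$ is inherited from the corresponding independence already recorded for the flat shadows: by \Cref{l.convex-shadow} the shadows $\shadow(\beta,r)$ are flat-convex with a quasiconvexity constant (in the quasi-isometric hyperbolic metric) that does not depend on $\beta$ or $r$. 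Once uniformity is in place the rest is the standard Mitra bookkeeping, and $K$ can be read off as a function of the quasi-isometry constants of the bundle structure only.

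Finally, I would note that the nesting statement $L(\beta,r)\subseteq L(\beta,r')$ for $r < r'$ is not part of this lemma (it was handled in \Cref{l.containment}/feature~(2) via $\partial_\infty N(\beta,r)\subseteq\partial_\infty N(\beta,r')$), so the proof here need only address the undistortion claim, and I would phrase it so that the single argument covers both $\lad(\beta)$ and $L(\beta,r)$ simultaneously, the only difference being projection to a geodesic versus projection to its ``thickening'' by a convex shadow.
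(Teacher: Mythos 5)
Your proposal is correct and matches the paper's approach: the paper's proof of this lemma consists precisely of the remark preceding the statement, namely to repeat the Mitra-style Lipschitz projection argument from \Cref{lem:nesting} while observing that the undistortion conclusion (unlike the subsequent quasiconvexity conclusion) does not use hyperbolicity of the ambient space. Your expansion of the fibrewise projection, the two-case Lipschitz estimate, and the remark that the nesting property is handled separately all line up with the paper's intended argument; the only soft spot, the uniformity of the hyperbolicity and projection constants across fibres over the whole Teichm\"uller disk, is also left implicit in the paper, so you are at the same level of detail as the source.
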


Our next aim is to show the existence of finite segments which produce any specified nesting on ladder-like sets. 
The additional complexity in this particular context is that a single saddle connection has no
lower bound on its flat length over all fibres.
To sidestep this issue, we finesse the definition of a good segment.

\smallskip
We consider the collection of flat geodesic segments that are a concatenation of segments of the form 
\[\beta =  i \ast v \ast h \ast f \]
for which
\begin{enumerate}
        \item the segments $v$ and $h$ are saddle connections with different slopes; in particular, we may assume that $v$ is almost vertical and $h$ almost horizontal; 
        \item the segments $i$ and $f$ have no singularities in their interior; and 
        \item the angles $\theta^L (i,v)$ and $\theta^L (h, f)$ subtended on the left along $\beta$ at the singularities satisfy $\theta^L (i, v) = \theta^L (h, f) = 3\pi/2$. 
\end{enumerate}

Note that condition (3) above implies that the angles $\theta^R (i, v)$ and $\theta^R (h,f)$ at the singularities satisfy $\theta^R (i, v) \geqslant 3\pi/2$ and $\theta^R (h, f) \geqslant 3 \pi/2$.
Let $v'$ and $v''$ be flat geodesic rays such that the concatenations $v' \ast v$ and $v'' \ast v$ are also flat geodesics and the angles $\theta^L (v', v) = \pi$ and $\theta^R (v', v) = \pi$. 
The concatenation $v' \ast v''$ is then a bi-infinite flat geodesic. 
Observe that the angle conditions imply that $(v' \ast v'') \cap i_\perp = \emptyset$. 
Similarly let $h'$ and $h''$ be flat rays such that the concatenations $h \ast h' $ and $h \ast h''$ are also flat geodesics and the angles $\theta^L (h \ast h') = \theta^R (h \ast h'') = \pi$.
Then the concatenation $h' \ast h''$ is a bi-infinite flat geodesic and by the same logic regarding angles $(h' \ast h'') \cap f_\perp = \emptyset$. 

\smallskip
We now derive
\begin{lemma}\label{l.contains}
A flat geodesic segment with endpoints on $i_\perp$ and $f_\perp$ contains $v \ast h$. 
\end{lemma}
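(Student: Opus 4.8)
The plan is to write down the flat geodesic from $x$ to $y$ explicitly and observe that it contains $v \ast h$. Write $p_1$ for the singularity where $i$ meets $v$, write $p'$ for the one where $v$ meets $h$, and $p_2$ for the one where $h$ meets $f$. Given a flat geodesic segment $\delta$ with one endpoint $x$ on $i_\perp$ and the other endpoint $y$ on $f_\perp$, let $[x,p_1] \subseteq i_\perp$ and $[p_2,y] \subseteq f_\perp$ be the corresponding sub-arcs, and set
\[ P \;=\; [x,p_1] \ast v \ast h \ast [p_2,y]. \]
First I would show that $P$ is itself a flat geodesic. Since the singular flat metric on $\mathbb{H}^2$ is nonpositively curved (the negative curvature is concentrated at the singularities), a local geodesic between two points is \emph{the} geodesic between them -- equivalently there are no flat bigons, which is the Gauss--Bonnet principle already used repeatedly above. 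Hence $\delta = P$, and then $v \ast h \subseteq P = \delta$ as claimed.

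As $P$ is a concatenation of flat geodesic sub-arcs, to verify that it is a local geodesic I only need to check, at each break point, that the two complementary angles subtended by $P$ are each at least $\pi$. In the interiors of $[x,p_1]$ and $[p_2,y]$ this is automatic because $i_\perp$ and $f_\perp$ are flat geodesics. At $p'$ the incoming and outgoing rays of $P$ are precisely those of the flat geodesic $\beta = i\ast v\ast h\ast f$, so the condition is inherited. This leaves the two points $p_1$ and $p_2$, which are symmetric, so I would treat $p_1$.

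At $p_1$ the ray of $P$ pointing back to $x$ is one of the two rays of $i_\perp$, and since $i_\perp$ is perpendicular to $i$ it makes angle $\pi/2$ with the direction $\bar i$ of $i$ at $p_1$; the forward ray of $P$ is the direction of $v$ at $p_1$, which by condition~(3) makes angle $\theta^L(i,v) = 3\pi/2$ with $\bar i$ on the left and $\theta^R(i,v)\geqslant 3\pi/2$ on the right, so the total cone angle at $p_1$ is at least $3\pi$. A direct bookkeeping of the two sectors cut out at $p_1$ by these two rays -- carried out in each of the two cases according to which ray of $i_\perp$ carries $x$ -- then shows that both sectors have angle at least $\pi$; this is the quantitative content of the observation, already recorded above, that $i_\perp$ meets the bi-infinite geodesic $v'\ast v''$ only at $p_1$. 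The analogous computation at $p_2$ uses the bounds $\theta^L(h,f) = 3\pi/2$, $\theta^R(h,f)\geqslant 3\pi/2$ from condition~(3) together with the perpendicularity of $f_\perp$ to $f$. I expect this angle bookkeeping at $p_1$ and $p_2$ to be the only real content of the proof; the reduction in the first paragraph and the checks at the remaining break points are routine and mirror the earlier lemmas.
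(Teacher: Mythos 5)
Your strategy — exhibit a candidate path $P$ from $x$ to $y$ passing through $v\ast h$, verify it is a local geodesic, and invoke uniqueness of geodesics in the CAT(0) singular flat metric — is sound in principle, and in fact the conclusion ($\delta$ passes through $p_1$ and $p_2$) is correct. The problem is with the construction of $P$ itself: you take $[x,p_1]\subseteq i_\perp$, which requires $p_1\in i_\perp$, and this is false. The perpendicular $i_\perp$ is the optimal perpendicular to $\beta$ at an \emph{interior} point of $i$ (in the application after the lemma, at the midpoint of $i$), not at the endpoint $p_1$ where $i$ meets $v$. Indeed the paper explicitly records, just before the lemma, that $(v'\ast v'')\cap i_\perp=\emptyset$; since $p_1\in v'\ast v''$, this forces $p_1\notin i_\perp$. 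Consequently there is no ``sub-arc of $i_\perp$ from $x$ to $p_1$,'' and your angle accounting at $p_1$ — which crucially uses that the backward ray of $P$ at $p_1$ is one of the two rays of $i_\perp$, perpendicular to $\bar i$ — has no footing. (The same objection applies symmetrically at $p_2$.) If one tries to repair this by taking $[x,p_1]$ to be the actual flat geodesic from $x$ to $p_1$, the direction of that geodesic at $p_1$ is no longer perpendicular to $i$ and depends on where $x$ sits on $i_\perp$; the needed angle bound at $p_1$ is then exactly as hard as the lemma itself.

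The paper's proof sidesteps this by never building a candidate path from $x$. Instead it shows that the bi-infinite geodesics $v'\ast v''$ and $h'\ast h''$ each separate $i_\perp$ from $f_\perp$, so $\delta$ must cross both; and between the two crossing points $p\in v'$, $q\in h'$, the concatenation $v'\ast v\ast h\ast h'$ is itself a geodesic joining $p$ to $q$, so by uniqueness of geodesics $\delta$ coincides with it there and hence contains $v\ast h$. The separation is exactly what the observation $(v'\ast v'')\cap i_\perp=\emptyset$ furnishes, and it is here that the angle condition $\theta^L(i,v)=3\pi/2$ from the definition of good segments does its work. You may want to redo your argument along these lines: the separation plus geodesic uniqueness avoid having to control the (unknown) direction of $\delta$ at its endpoint on $i_\perp$.
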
 

\begin{proof}
Let $\beta'$ be a flat geodesic segment with endpoints $x_i$ on $i_\perp$ and $x_f$ on $f_\perp$. 
Since $v' \ast v''$ separates $x_i$ and $x_f$, the segment $\beta'$ must intersect $v' \ast v''$. 
Similarly $\beta'$ must also intersect $h' \ast h''$. 
Breaking symmetry, suppose that $\beta'$ intersects $v'$ and $h'$ in points $p$ and $q$ respectively.
The concatenation $v' \ast v \ast h \ast h'$ also gives a geodesic segment between $p$ and $q$. 
This implies that $\beta'$ and $v' \ast v \ast h \ast h'$ must coincide between $p$ and $q$ and thus $\beta'$ contains $v \ast h$.
Identical arguments apply for possibilities for intersections of $\beta'$ with $v' \ast v''$ and $h' \ast h''$ which concludes the proof of the lemma. 

\end{proof} 

We now parameterise $\beta$ and denote by times $s < t$ the midpoints of $i$ and $f$ along $\beta$.
By \Cref{l.contains}, any flat geodesic from $\mathbb{H}^2 - \shadow(\beta, s)$ to $\shadow(\beta, t)$ contains $v \ast h$. 

\smallskip
As the maps $\sigma(X)$ act by affine diffeomorphisms, it immediately follows that
\begin{corollary}\label{c.contains}
A flat geodesic segment with endpoints on $\sigma(X)(i_\perp)$ and $\sigma(X)(f_\perp)$ contains the segment $\sigma(X) (v \ast h)$. 
\end{corollary}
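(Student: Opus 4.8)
The plan is to deduce the corollary from \Cref{l.contains} by transporting the whole configuration through the affine diffeomorphism $\sigma(X)$. First I would record the two structural facts we need. The map $\sigma(X)$ is a diffeomorphism carrying the singular flat metric on the fibre over $X_0$ to the one on the fibre over $X$; it is affine in the natural charts, and so is its inverse $\sigma(X)^{-1}$, since the inverse of an affine map is affine. Consequently both $\sigma(X)$ and $\sigma(X)^{-1}$ send flat geodesic segments to flat geodesic segments (we have already used the forward direction of this when defining $\lad(\beta)$). In particular the property ``$\delta$ is a flat geodesic segment with one endpoint on the set $A$ and the other on the set $B$'' is preserved under applying $\sigma(X)$ or $\sigma(X)^{-1}$, provided one transports $A$ and $B$ along as well.

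Next I would run the transport argument. Let $\delta$ be a flat geodesic segment with one endpoint on $\sigma(X)(i_\perp)$ and the other on $\sigma(X)(f_\perp)$. Consider its preimage $\delta' = \sigma(X)^{-1}(\delta)$. By the first step $\delta'$ is again a flat geodesic segment, and since $\sigma(X)$ is a bijection its endpoints lie on $\sigma(X)^{-1}(\sigma(X)(i_\perp)) = i_\perp$ and on $f_\perp$ respectively. \Cref{l.contains} then applies verbatim to $\delta'$ and gives that $\delta'$ contains $v \ast h$. Applying $\sigma(X)$ to this inclusion yields that $\delta = \sigma(X)(\delta')$ contains $\sigma(X)(v \ast h)$, which is exactly the asserted statement.

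Finally I would note that nothing in the argument uses any metric property of $\sigma(X)$ beyond the fact that it and its inverse take flat geodesics to flat geodesics; in particular we never need $\sigma(X)$ to be conformal, and the images $\sigma(X)(i_\perp)$, $\sigma(X)(f_\perp)$ are treated purely as subsets of the fibre over $X$ rather than as perpendiculars re-derived in the target singular flat metric. The only point deserving an explicit (one-line) remark is that $\sigma(X)^{-1}$ is again affine and hence geodesic-preserving; there is no genuine obstacle here, which is precisely why the statement is recorded as a corollary of \Cref{l.contains} rather than proved from scratch.
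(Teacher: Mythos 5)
Your proof is correct and takes essentially the same approach as the paper, which simply notes that the maps $\sigma(X)$ act by affine diffeomorphisms and that the corollary therefore follows immediately from \Cref{l.contains}. You have merely spelled out the transport argument (pull back via $\sigma(X)^{-1}$, apply the lemma, push forward), which is implicit in the paper's one-line justification.
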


Given any $\ell > 0$ it is obvious from the asymptotics of saddle connections that we can choose a segment $\beta$ such that both $v$ and $h$ have flat length at least $\ell$.
We will call such a segment $\beta$ to be $\ell$-good. 

\smallskip
With the assumption that $v$ is almost very vertical and $h$ almost horizontal, notice that $\sigma(X) (v \ast h)$ is bounded below by $O(\ell)$ for any $X$ in $D_{q_0}$. 

\smallskip
The main point now is that
\begin{lemma} 
Given $R > 0$ there exists $\ell> 0$ such that for any $\ell$-good segment $\beta = i \ast v \ast h \ast f$ and times $s< t$ corresponding to midpoints of $i$ and $f$ along $\beta$, any geodesic segment in $E$ that joins $E - L(\beta, s)$ to $L(\beta, t)$ has length at least $R$. 
\end{lemma}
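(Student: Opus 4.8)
The plan is to argue by contradiction. Let $\eta\colon[0,L]\to E$ be a path from a point $a\in E\setminus L(\beta,s)$ to a point $b\in L(\beta,t)$; we must show $L\ge R$, so assume instead $L<R$. The idea is to push $\eta$ into a single fibre, where \Cref{c.contains} forces any flat geodesic joining the two shadows to contain a translate of $v\ast h$, while a translate of $v\ast h$ has flat length $\gtrsim\ell$ in \emph{every} fibre metric; pushing $\eta$ into one fibre will cost only a multiplicative factor controlled by $L<R$, so we will obtain $\ell\le C(R)\,R$ with $C(R)$ depending only on $R$ (and on $q_0$). Choosing $\ell>C(R)\,R$ from the outset then proves the lemma.

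First I would localise over the base. The bundle projection $E\to D_{q_0}$ is $1$--Lipschitz, so the image of $\eta$ lies in the ball $B=B_{D_{q_0}}(X_b,L)$, where $X_b$ is the point of $D_{q_0}$ over which $b$ lies. Over $B$ I use the trivialisation furnished by the section $\sigma$ and write $\tau_{X\to X'}=\sigma(X')\circ\sigma(X)^{-1}$ for the resulting affine identification of the fibre over $X$ with the fibre over $X'$. The structural input here is that, $D_{q_0}$ being isometrically embedded in Teichm\"uller space, the dilatation of $\tau_{X\to X'}$ is controlled by the hyperbolic distance in the disk: if $d_{D_{q_0}}(X,X')\le\delta$ then $\tau_{X\to X'}$ is $C(\delta)$--bi-Lipschitz between the two flat metrics, with $C(\delta)=e^{O(\delta)}$ a bound independent of $\beta$, of $\ell$, and of where $X,X'$ sit in $D_{q_0}$. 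Writing $\eta(u)=(\hat\eta(u),X(u))$ in this trivialisation, the first coordinate $\hat\eta$ is a path in the fibre over $X_b$ from $b$ to $a'\coloneqq\tau_{X_a\to X_b}(a)$, where $X_a$ is the point over which $a$ lies. Because $\tau_{X\to X_b}$ carries $L(\beta,r)\cap(\text{fibre over }X)=\sigma(X)\shadow(\beta,r)$ onto $\sigma(X_b)\shadow(\beta,r)$, we have $b\in\sigma(X_b)\shadow(\beta,t)$ and $a'\notin\sigma(X_b)\shadow(\beta,s)$. Finally, by the construction of $d_E$ from this trivialisation (see \cite[Section 3.3--3.5]{DDLS}), transporting $\eta$ into the fibre over $X_b$ inflates length by at most $C(L)$, so $\hat\eta$ has flat length at most $C(L)\,L$.

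Next I would run the flat argument inside the single fibre over $X_b$. Replace $\hat\eta$ by the flat geodesic $[b,a']$ of that fibre, whose flat length is still at most $C(L)\,L$. Since $\shadow(\beta,t)\subseteq\shadow(\beta,s)$ and flat shadows are convex (as in \Cref{l.convex-shadow}), their affine images $\sigma(X_b)\shadow(\beta,t)\subseteq\sigma(X_b)\shadow(\beta,s)$ are convex, so $[b,a']$ meets each in a single subarc; travelling from $b$ it thus exits $\sigma(X_b)\shadow(\beta,t)$ across $\sigma(X_b)(f_\perp)$ and, later, exits $\sigma(X_b)\shadow(\beta,s)$ across $\sigma(X_b)(i_\perp)$, so the subarc of $[b,a']$ between these two crossings has one endpoint on $\sigma(X_b)(i_\perp)$ and one on $\sigma(X_b)(f_\perp)$. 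By \Cref{c.contains}, $[b,a']$ then contains $\sigma(X_b)(v\ast h)$. It remains to bound the flat length of $\sigma(X_b)(v\ast h)$ from below uniformly in $X_b$: with $A\in\mathrm{SL}_2(\mathbb R)$ the linear part of $\sigma(X_b)$ and $\hat v,\hat h$ the unit directions of $v,h$ --- nearly vertical and nearly horizontal, hence nearly orthogonal so $|\hat v\wedge\hat h|\ge\tfrac12$ --- we have $|A\hat v|\,|A\hat h|\ge|A\hat v\wedge A\hat h|=|\hat v\wedge\hat h|\ge\tfrac12$, whence the flat length of $\sigma(X_b)(v\ast h)$ is $|v|\,|A\hat v|+|h|\,|A\hat h|\ge\ell\bigl(|A\hat v|+|A\hat h|\bigr)\ge2\ell\sqrt{|A\hat v|\,|A\hat h|}\ge\ell$, using that $\beta$ is $\ell$--good so that $|v|,|h|\ge\ell$. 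Combining the two estimates,
\[
\ell\;\le\;\mathrm{length}_{\mathrm{flat}}\bigl([b,a']\bigr)\;\le\;C(L)\,L\;\le\;C(R)\,R,
\]
contradicting $\ell>C(R)\,R$. Hence $L\ge R$.

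The main obstacle --- and where the non-hyperbolicity of $E$ genuinely intervenes --- is the collapse step of the second paragraph: one needs the identification of fibres over a $d_{D_{q_0}}$--ball of radius $L<R$ to distort flat lengths by a factor depending only on $R$ (not on $\beta$, on $\ell$, or on the location of that ball), and one needs that pushing a $d_E$--path into a single fibre costs only that factor. Both rest on the precise construction of $d_E$ from the Teichm\"uller-extremal trivialisation, namely that nearby fibres differ by affine maps of uniformly bounded dilatation and that $d_E$ restricts to the flat metric on each fibre while behaving like a submersion metric over the base. Granting this, the remainder --- convexity of flat shadows, the order in which $[b,a']$ crosses $i_\perp$ and $f_\perp$, and the $\mathrm{SL}_2(\mathbb R)$ wedge estimate --- is routine and parallels the fibered $3$--manifold case.
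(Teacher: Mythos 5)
Your argument is correct and takes a genuinely different route from the paper's. The paper invokes the undistortedness of $L(\beta,s)$ and $L(\beta,t)$ (Lemma~\ref{lem:TM-ladders}) to reduce the estimate to a computation inside $L(\beta,s)$, then reduces further to the $E$--distance between the single-ray ladders $\lad(x_i)$ and $\lad(x_f)$, and concludes by citing Corollary~\ref{c.contains} together with the observation that $\sigma(X)(v\ast h)$ has flat length of order $\ell$ in every fibre. Your contradiction argument skips these reductions: you localise a hypothetical short path over a base ball of radius $L<R$, push it into a single fibre at multiplicative cost $e^{O(L)}$ using the $\mathrm{SL}_2(\mathbb R)$ structure of the disk, and then run the flat-geometric input (Corollary~\ref{c.contains} together with the wedge estimate) in that one fibre to extract $\ell\le C(R)R$. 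What this buys you is an explicit treatment of the danger that the non-hyperbolicity of $E$ introduces: a path could a priori traverse $v$ in a fibre where $v$ is compressed and $h$ in a different fibre where $h$ is compressed, but your localisation step shows that changing fibres costs base-distance comparable to the flat savings, so no such shortcut helps. The paper hands this issue off to the undistortedness lemma and a terse ``this follows from''; your version makes the key trade-off explicit, so it is the more careful account of essentially the same geometric fact. Both arguments ultimately rest on the same determinant bound $|A\hat v|\,|A\hat h|\ge|\det A|\cdot|\hat v\wedge\hat h|$, which is exactly the paper's ``observation preceding the lemma''; your constant $\tfrac12$ simply quantifies ``almost vertical/horizontal'' and is harmless since any uniform lower bound on $|\hat v\wedge\hat h|$ would do.
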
 

\begin{proof} 
Since the sets $L(\beta, s)$ and $L(\beta, t)$ are undistorted by \Cref{lem:TM-ladders}, the $E$-distance
between $\partial L(\beta, s)$ and $\partial L(\beta, t)$ can be (coarsely) computed in $L(\beta, s)$. 
So suppose that a geodesic segment in $L(\beta, s)$ coarsely gives the distance between  $\partial L(\beta, s)$ and $\partial L(\beta, t)$. 
Let $x_i \in i_\perp = \partial \shadow(\beta, s)$ and $x_f \in f_\perp= \partial \shadow(\beta, t)$ be points such that the geodesic segment above has its endpoints in the ladders $\lad(x_i)$ and $\lad(x_f)$. 
Since ladders are also undistorted by \Cref{lem:TM-ladders}, it suffices to show that the distance in $E$ between any point of $\lad(x_i)$ and $\lad(x_f)$ is uniformly bounded from below by a constant that is linear in $\ell$. 
This follows from \Cref{c.contains} and the observation preceding the lemma that $\sigma(X) (v \ast h)$ is bounded below by $O(\ell)$ for any $X$ in $D_{q_0}$. 
This means that given $R> 0$, we can indeed find $\ell> 0$ that achieves the nesting as required. 

\end{proof}

\subsection{Point-pushing groups}
The final setting that we consider is given by the Birman exact sequence:
\[ 1 \to \pi_1(S_g,p) \to \mathrm{Mod}(S_g,p) \to \mathrm{Mod}(S_g) \to 1. \]
We follow essentially the same strategy as in the previous sections.
As in the section on hyperbolic extensions, we construct nested shadows along geodesic rays in the fibre in exactly the same way.
The construction of ladders and subsequently the sets $L(\beta, r)$ is also identical.

\smallskip
The definition of segments that achieve nesting as in point (3) of \Cref{r.features} require care.
Roughly speaking given $\ell > 0$ sufficiently large, we need a segment such that all $\text{Mod}(S_g)$ images of it have lengths bounded below by $O(\ell)$.

\smallskip
We first recall the following basic fact from elementary hyperbolic geometry.
\begin{lemma}\label{l.self}
Let $S$ be a closed surface. 
Given any complete hyperbolic structure $X$ on $S$ and any $R > 0$, there exists a natural number $k = k(S,R)$ such that any geodesic arc on $X$ with at least $k$ self-intersections has length at least $R$.
\end{lemma}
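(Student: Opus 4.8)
I would prove the contrapositive quantitative statement: a geodesic arc of bounded length on $X$ can have only boundedly many self-intersections, with a bound depending only on $X$ and the length. The only input from hyperbolic geometry is that, since $S$ is closed, the hyperbolic structure $X$ has positive injectivity radius $\epsilon = \epsilon(X) > 0$, so every metric ball of radius less than $\epsilon$ in $X$ embeds isometrically onto a ball in $\mathbb{H}^2$.

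\textbf{Main step: the subdivision bound.} Let $\gamma\colon[0,L]\to X$ be a geodesic arc parametrised by arc length, and cut $[0,L]$ into $N = O(L/\epsilon)$ consecutive subintervals $I_1,\dots,I_N$ each of length at most $\epsilon/4$; write $\gamma_j = \gamma|_{I_j}$. The key local observation is that if two short pieces $\gamma_j,\gamma_k$ meet at a point $x$, then every point of $\gamma_j\cup\gamma_k$ lies within distance $\epsilon/4$ of $x$ (travel along $\gamma$ at most the length of one piece), so $\gamma_j\cup\gamma_k$ is contained in the embedded ball $B(x,\epsilon/2)$. Pulling back by the isometry $B(x,\epsilon/2)\cong B_{\mathbb{H}^2}(0,\epsilon/2)$, the pieces $\gamma_j,\gamma_k$ become genuine geodesic segments of $\mathbb{H}^2$, and two such segments meet in at most one point unless they are collinear and overlap. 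In particular each $\gamma_j$ is simple, and any self-intersection of $\gamma$ is an intersection of two distinct pieces $\gamma_j,\gamma_k$, $j\ne k$, with at most one such point per unordered pair $\{j,k\}$. Hence $\gamma$ has at most $\binom{N}{2} = O\!\left(L^2/\epsilon^2\right)$ self-intersections. (Alternatively, one can lift $\gamma$ to a segment of length $L$ in $\mathbb{H}^2$ and check that each self-intersection determines a distinct nontrivial deck transformation $g$ with $d(x_0,gx_0)\le 2L$, of which there are only finitely many depending on $X$ and $L$; I would include the subdivision argument as the primary, most elementary route.)

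\textbf{The one delicate point.} The collinear/overlapping case above occurs exactly when $\gamma$ retraces part of a geodesic it has already traversed, i.e.\ when the image of $\gamma$ lies in a single closed geodesic $c$ of $X$ with $\ell(c)\le L$. By discreteness of the length spectrum of the closed surface $X$ there are only finitely many such $c$, and the transverse self-intersections of $\gamma$ are then among those of $c$, bounded by $I(X,L) := \max\{\iota(c): c\text{ closed geodesic},\ \ell(c)\le L\}<\infty$; equivalently, one may simply take ``self-intersection'' to mean a transverse double point, for which this degenerate case contributes nothing. Either way, letting $\Phi(X,L)$ denote the resulting upper bound (which is non-decreasing in $L$) on the number of self-intersections of a geodesic arc of length $L$, it suffices to set $k(S,R) = \Phi(X,R)+1$: an arc with at least $k(S,R)$ self-intersections and length $L<R$ would have at most $\Phi(X,L)\le\Phi(X,R)<k(S,R)$ of them, a contradiction, so its length is at least $R$. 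The only step requiring genuine care is handling this overlapping case, and it is disposed of by either of the two routes just indicated; the rest is the subdivision estimate.
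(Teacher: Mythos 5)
The paper does not actually prove this lemma; it is stated as a ``basic fact from elementary hyperbolic geometry'' with no argument supplied, so there is nothing to compare against and your job is simply to supply a correct proof --- which you do. The subdivision argument is sound: the only inputs are that $X$ has positive injectivity radius $\epsilon$ (so balls of radius $\epsilon/2$ are isometric to hyperbolic balls), that the union of two meeting pieces of length $\le \epsilon/4$ sits inside one such ball, and that two geodesic segments of $\mathbb{H}^2$ cross at most once unless collinear; the resulting bound of $\binom{N}{2}$ with $N = O(L/\epsilon)$ works whether one counts self-intersection points or crossing pairs of strands, since each piece is simple and each unordered pair of pieces contributes at most one crossing. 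You are also right that the only delicate point is the overlapping case, and your analysis of it is correct: a geodesic arc can only retrace itself with matching (not reversed) tangent --- reversal would force a zero velocity at the midpoint --- so overlap forces the image to wrap a closed geodesic $c$ with $\ell(c)\le L$, of which there are finitely many, and the transverse double points are then controlled (if one insists on counting crossing pairs with multiplicity rather than points, the wrapping number is bounded by $L/\mathrm{sys}(X)$, so the count is still finite and depends only on $X$ and $L$). One small remark on your parenthetical alternative: the assignment of a deck transformation $g$ with $d(x_0,gx_0)\le 2L$ to each self-intersection is not injective for free; showing that each $g$ accounts for at most one transverse crossing requires exactly the same ``two hyperbolic geodesics meet at most once unless collinear'' observation, so that route is not really shorter. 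The monotonicity of $\Phi(X,\cdot)$ and the final contrapositive step are fine.
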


We now fix a complete hyperbolic structure $X$ on $S_g$.
Given $R>0$ sufficiently large let $k$ be as in \Cref{l.self}. 
Fix a geodesic arc on $X$ with $k$ self-intersections. 
Let $\gamma$ be a bi-infinite lift in $\mathbb{H}^2$ of this arc on $X$.
We parameterise $\gamma$ with unit speed.
Given a time $s$ let $\alpha_s$ be the bi-infinite geodesic orthogonal to $\gamma$ at $\gamma_s$. 
We let $H^-_s$ and $H^+_s$ be the half-spaces with boundary $\alpha_s$ such that $\gamma_t$ converges in to $H^-_s$ as $t \to -\infty$ and $\gamma_t$ converges in to $H^+_s$ as $t \to \infty$. 

\smallskip
We may then choose a time $s$ sufficiently large such that any geodesic segment $\gamma'$ with endpoints in $H^-_0= H^-_{-s}$ and $H^+_0 = H^+_s$ fellow-travels a long enough arc of $\gamma$ to ensure that the projection of $\gamma'$ to $X$ has at least $k$ self-intersections. 
In fact, by passing to a larger $s$ if required, we can ensure that there are at least $k$ group elements $g_i \, : 1 \leqslant i \leqslant k$ such that the half-spaces in the list $\{H^-_0, H^+_0, g_1 H^-_0, g_1 H^+_0, \cdots, g_k H^-_0, g_k H^+_0 \}$ are pairwise well-separated and the pairs $(g_i H^-_0, g_i H^+_0)$ all link the pair $(H^-_0, H^+_0)$. 
Let $I^-_0 = \partial_\infty H^-_0$ and $I^+_0 = \partial_\infty H^+_0$ be the pair of intervals at infinity.
Similarly, we get the pairs of intervals $I^-_i = \partial_\infty g_i H^-_0$ and $I^+_i = \partial_\infty g_i H^+_0$.
The separation of half-spaces implies that these intervals are all pairwise disjoint and along the circle $S^1 = \partial_\infty \mathbb{H}^2$ the pairs $(I^-_i, I^+_i)$ are all linked with the pair $(I^-_0, I^+_0)$. 

\smallskip
The hyperbolic structure $X$ defines a group-equivariant quasi-isometry $\psi: \mathbb{H}^2 \to \pi_1(S_g)$ that gives a homeomorphism $\psi_\infty$ of their Gromov boundaries.
So, we get intervals $J^-_i = \psi_\infty (I^-_i)$ and $J^+_i = \psi_\infty (I^+_i)$ in the Gromov boundary of $\pi_1(S_g)$.
Being a homeomorphism, $\psi_\infty$ preserves the linking and hence the pairs $(J^-_i, J^+_i)$ all link $(J^-_0, J^+_0)$. 

\smallskip
Realising a mapping class on $S_g$ as an actual automorphism $f$ of $\pi_1(S_g)$, the action of $f$ on $\pi_1(S_g)$ extends to a homeomorphism $f_\infty$ of the Gromov boundary.
Hence, the pairs $(f_\infty(J^-_i), f_\infty (J^+_i))$ continue to link the pair $(f_\infty J^-_0, f_\infty J^+_0)$. 
Let $H^-_{f,i}$ and $H^+_{f,i}$ be the half-spaces in $\mathbb{H}^2$ such that $\partial_\infty H^-_{f,i} = f_\infty(J^-_i)$ and $\partial_\infty H^+_{f,i} = f_\infty (J^+_i)$. 

\smallskip
We note that
\begin{lemma}
For any mapping class on $S_g$ and any automorphism $f$ of $\pi_1(S_g)$ in its class, 
\[
d_{\hyp} ( H^-_{f,0} \, , H^+_{f,0} ) \geqslant R.
\]
\end{lemma}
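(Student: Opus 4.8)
The plan is to realise $d_{\hyp}(H^-_{f,0},H^+_{f,0})$ as the length of an explicit geodesic segment in $\mathbb{H}^2$ and then bound from below the number of self-intersections of its projection to $X$, so that \Cref{l.self} forces the length to be at least $R$. First, since the arcs $I^\pm_0=\partial_\infty H^\pm_0$ have disjoint closures (they are the ideal boundaries of the half-spaces bounded by $\alpha_{-s}$ and $\alpha_s$, and $s>0$), and $\psi_\infty$ and $f_\infty$ are homeomorphisms of the circle, the arcs $\partial_\infty H^-_{f,0}=f_\infty(J^-_0)$ and $\partial_\infty H^+_{f,0}=f_\infty(J^+_0)$ are disjoint with disjoint closures. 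Hence $H^-_{f,0}$ and $H^+_{f,0}$ are ultraparallel half-spaces, and their distance is realised by the unique common perpendicular geodesic $\delta$. Let $\eta=\delta\cap\mathcal{C}$ be the compact sub-segment of $\delta$ lying in the closed slab $\mathcal{C}=\mathbb{H}^2\setminus(H^-_{f,0}\cup H^+_{f,0})$, so that $d_{\hyp}(H^-_{f,0},H^+_{f,0})$ equals the length of $\eta$. It remains to show that the projection $\bar\eta$ of $\eta$ to $X$ has at least $k$ self-intersections.

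For this I would transport the self-intersections of the fixed geodesic arc on $X$. The construction supplies elements $g_1,\dotsc,g_k\in\pi_1(S_g)$, one for each of the $k$ (transverse, distinct) self-intersections of that arc, such that the half-spaces in the list $\{H^\pm_0,\,g_iH^\pm_0\}$ are pairwise well-separated and each pair $(g_iH^-_0,g_iH^+_0)$ links $(H^-_0,H^+_0)$ at infinity. Write $\Phi_\infty=\psi_\infty^{-1}\circ f_\infty\circ\psi_\infty$ for the induced self-homeomorphism of $S^1=\partial_\infty\mathbb{H}^2$, so that $\partial_\infty H^\pm_{f,i}=\Phi_\infty(I^\pm_i)$. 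Two properties drive the argument: (i) being a homeomorphism of $S^1$, $\Phi_\infty$ preserves the cyclic order, hence the linking of pairs of arcs and the pairwise disjointness of their closures; and (ii) $\Phi_\infty$ intertwines the deck action of $\pi_1(S_g)$ on $S^1$ with its image under $f$, i.e. $\Phi_\infty(g\cdot\xi)=f(g)\cdot\Phi_\infty(\xi)$ for all $g\in\pi_1(S_g)$ and $\xi\in S^1$ -- the conjugation identity $\Phi g\Phi^{-1}=f(g)$ read off at the boundary. Combining (i) and (ii): for each $i$ the pair $(\partial_\infty f(g_i)H^-_{f,0},\,\partial_\infty f(g_i)H^+_{f,0})$ links $(\partial_\infty H^-_{f,0},\,\partial_\infty H^+_{f,0})$, and the four arcs $\partial_\infty H^\pm_{f,0}$ and $\partial_\infty f(g_i)H^\pm_{f,0}$ have pairwise disjoint closures. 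Since $\delta$ has one ideal endpoint in $\partial_\infty H^-_{f,0}$ and one in $\partial_\infty H^+_{f,0}$, while the deck-translate $f(g_i)\delta$ has its ideal endpoints in the linked pair $\partial_\infty f(g_i)H^\pm_{f,0}$, the geodesics $\delta$ and $f(g_i)\delta$ have interleaved ideal endpoints and therefore cross, transversally and exactly once.

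Finally I would check that this crossing lies on the compact sub-segment $\eta$, not on the two infinite tails of $\delta$. By the disjointness just noted, both ideal endpoints of $f(g_i)\delta$ lie outside the closed arcs $\overline{\partial_\infty H^-_{f,0}}$ and $\overline{\partial_\infty H^+_{f,0}}$; since the half-spaces $H^\pm_{f,0}$ are convex, $f(g_i)\delta$ cannot enter either of them and hence lies in the slab $\mathcal{C}$, where $\delta\cap\mathcal{C}=\eta$. Thus each of the $k$ distinct deck transformations $f(g_1),\dotsc,f(g_k)$ yields a transverse self-crossing of $\bar\eta$, and these are distinct because no two of the $g_i$ agree up to inverse (the original arc was chosen with $k$ distinct double points). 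By \Cref{l.self} the length of $\eta$ is at least $R$, which is exactly $d_{\hyp}(H^-_{f,0},H^+_{f,0})\geqslant R$.

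The one genuinely delicate point is the localisation in the last paragraph: one needs the \emph{well-separation} of the linked configuration, not merely that the relevant geodesics cross, so that after applying the non-isometric boundary homeomorphism $\Phi_\infty$ the translates $f(g_i)\delta$ are trapped in the slab $\mathcal{C}$ and their crossings with $\delta$ cannot escape onto the tails. The intertwining identity (ii) -- which is what converts ``linking of arcs'' into ``a group element crosses the geodesic'' -- should also be stated carefully (up to harmlessly replacing the $g_i$ by images under an inner automorphism or an inversion); the remaining ingredients (common perpendiculars of ultraparallel geodesics, convexity of half-spaces, and the interleaving criterion for crossing) are standard plane hyperbolic geometry.
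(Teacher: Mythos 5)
Your argument is correct and follows the same route as the paper: the linking of the pairs $(\partial_\infty H^\pm_{f,i})$ with $(\partial_\infty H^\pm_{f,0})$, preserved by the boundary homeomorphism induced by $f$, forces at least $k$ self-intersections of the projected connecting arc, and \Cref{l.self} then gives the length bound. The paper states this in two lines; you have simply supplied the details (common perpendicular, the intertwining identity, and trapping the crossings in the slab) that the paper leaves implicit.
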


\begin{proof}
Because of the linking, any geodesic segment that connects $H^-_{f,0}$ to $H^+_{f,0}$ projects to an arc on $X$ that self-intersects at least $k$ times. 
By \Cref{l.self}, the arc has length at least $R$ and the lemma follows.
\end{proof} 

By construction, $\cup_{f \in \text{Mod}(S_g)} H^+_{f, 0} = L(\gamma, s) $ and $\cup_{f \in \text{Mod}(S_g) } H^-_{f, 0}  = \text{Mod}(S_g, p) - L(\gamma, -s)$. 

\smallskip
We now define the nesting segment to be the segment $\beta = [\gamma_{-3s}, \gamma_{3s}]$. 
To compare constants with feature (3) in \Cref{r.features}, we set $\ell = 2s$ and reset time zero to be $-3s$. 
We then get a parameterised segment $\beta$ of length $3\ell$ for which the ball $B(\beta_0, \rho)$ is contained in the complement of $L(\beta, \ell)$,  the ball $B(\beta_{3\ell}, \rho)$ is contained in $L(\beta, 2\ell)$ and 
$D_{\text{Mod}(S_g, p)} (\text{Mod}(S_g, p) - L(\beta, \ell), L(\beta, 2\ell)) > R$, as required.



\begin{thebibliography}{99}
  \bibitem{BH} Bridson, M. R. and Haefliger, A. {\em Metric spaces of non-positive curvature}, Grundlehren der Mathematischen Wissenschaften. 319. Berlin: Springer. xxi, 643 p. (1999).
  
	\bibitem{Can-Thu} Cannon, J. and Thurston, W. {\em Group invariant Peano curves}, Geom. Topol. 11 (2007), 1315-1355. 
	
	\bibitem{Der-Kle-Nav} Deroin, B., Kleptsyn, V. and Navas, A. {\em On the question of ergodicity of minimal group actions on the circle}, Mosc. Math. J. 9 (2009), no. 2, 263-303.

        \bibitem{DDLS} Dowdall, S., Durham, M.G., Leininger C.J. and Sisto A. {\em Extensions of Veech groups I: A hyperbolic action}, arXiv preprint 2006.16425
        
        \bibitem{Gad-Mah-Tio1} Gadre, V., Maher, J. and Tiozzo, G. {\em Word length statistics and Lyapunov exponents for Fuchsian groups with cusps}, New York J. Math. 21 (2015), 511-531.
        
        \bibitem{Gad-Mah-Tio2} Gadre, V., Maher, J. and Tiozzo, G. {\em Word length statistics for Teichm\"uller geodesics and singularity of harmonic measure}, Comment. Math. Helv. 92 (2017), no. 1, 1-36. 
          
	\bibitem{Kin} Kingman, J. F. C. {\em The ergodic theory of sub-additive stochastic processes,} J. Roy. Statist. Soc. Ser. B, 30 (1968), 499-510.
	
	\bibitem{Mah-Tio} Maher, J. and Tiozzo, G. {\em Random walks on weakly hyperbolic groups,} J. Reine. Angew. Math. 742 (2018), 187-239.
	
	\bibitem{Mas1} Masur, H. {\em Closed trajectories for quadratic differentials with an application to billiards}, Duke Math. J. 53 (1986), no. 2, 307-314.
	
	\bibitem{Mas2} Masur, H. {\em The growth rate of trajectories of a quadratic differential}, Ergodic Theory Dynam. Systems 10 (1990), no. 1, 151-176.
	
	\bibitem{Mit} Mitra, Mahan {\em Cannon Thurston maps for hyperbolic group extensions,} Topology 37 (1998), no. 3, 527-538.

         \bibitem{Mos} Mosher, L. {\em Hyperbolic extensions of groups}, J. Pure Appl. Algebra 110 (1996), no. 3, 305-314.
         
         \bibitem{Ran-Tio} Randecker, A. and Tiozzo, G. {\em Cusp excursion in hyperbolic manifolds and singularity of harmonic measure}, J. Mod. Dyn 17 (2021), 183-211.

	\bibitem{Woe} Woess, W. {\em Random walks on infinite graphs and groups,} Cambridge Tracts in Math. 138, Cambridge University Press, Cambridge 2000. 
	
	
\end{thebibliography}
\end{document}